\newtheorem{thm}{Theorem}[section]
\newtheorem{cro}[thm]{Corollary}
\newtheorem{defn}{Definition}[section]
\newtheorem{prop}{Proposition}[section]
\newtheorem{lem}[thm]{Lemma}
\numberwithin{equation}{section}
\newtheorem{exa}[thm]{Example}
\newcommand*{\dif}{\mathop{}\!\mathrm{d}}
\begin{document}
\title{
On the  metric mean dimensions of saturated sets
\footnotetext {* Corresponding author}
\footnotetext {2020 Mathematics Subject Classification: 37A05; 37A15; 54F45.}}
\author{Yong Ji$^{1}$, Junye Li$^{1}$ and Rui Yang$^{*2,3}$ \\ 
\small 1.  Department of Mathematics, Ningbo University, Ningbo 315211, Zhejiang, P.R. China\\ 
\small 2.College of Mathematics and Statistics, Chongqing University, Chongqing 401331, P.R.China\\
\small 3. Key Laboratory of Nonlinear Analysis and its Applications (Chongqing University), \\
\small Ministry of Education\\
\small e-mail: imjiyong@126.com, lijunyegz@163.com, zkyangrui2015@163.com}

\date{}
\maketitle

\begin{center}
\begin{minipage}{120mm}
{\small {\bf Abstract.}  From a geometric perspective, we employ metric mean dimension to investigate the set of generic points of invariant measures and saturated sets in infinite entropy systems. For systems with the specification property, we establish certain variational principles for the Bowen and packing metric mean dimensions of saturated sets in terms of Kolmogorov-Sinai $\epsilon$-entropy, and prove that the upper capacity metric mean dimension of saturated sets has full metric mean dimension. Consequently, the Bowen and packing metric mean dimensions of the set of generic points of invariant measures coincide with the mean Rényi information dimension, and the upper capacity metric mean dimension of the set of generic points of invariant measures also has full metric mean dimension.
	
~~~~~As applications, for systems with the specification property, we present the qualitative characterization of the metric mean dimensions of level sets, the set of mean Li-Yorke pairs in infinite-entropy systems, and the set of generic points of invariant measures in full shifts over compact metric spaces.

}
\end{minipage}

\end{center}

\vskip0.5cm {\small{\bf Keywords:}  multifractal analysis; metric mean dimension; saturated set;  generic point; variational principle}
\vskip0.5cm
\tableofcontents

\section{Introduction}

Let $(X,d,f)$ be a topological dynamical system (TDS for short), where $(X,d)$ is a compact metric space, and $f:X\to X$ is  a homeomorphism.   Let $M(X)$, $M_f(X)$, $M_f^e(X)$ denote the sets of Borel probability measures on $X$ endowed with the weak$^\ast$-topology, $f$-invariant, and $f$-ergodic  Borel probability measures on $X$,  respectively. 

Quantitative analysis of dynamical systems aims to use  entropy-like quantities to understand the dynamics of the abstract dynamical systems.  Among these  entropy-like quantities,  measure-theoretic entropy and topological entropy are two  topological invariants to characterize the topological complexity of dynamical systems, which  are  related by the classical variational principle:
$$h_{top}(f)=\sup_{\mu \in M_f(X)}{h_{\mu}(f)},$$
where $h_{top}(f)$  denotes the  topological entropy of $X$, and  $h_{\mu}(f)$ is the measure-theoretic entropy of  $\mu$. To extend the concept of topological entropy to any non-empty subsets, resembling the definition of Hausdorff dimension, Bowen  \cite{Bowen1973}  in his pioneering work  introduced Bowen topological entropy on subsets. It turns out that the Bowen topological entropy  provides a powerful tool for studying the ``size"  of fractal-like sets and has directly contributed to the   development of multifractal analysis in  dynamical systems.  The multifractal analysis, with its origin dating back to Besicovitch, takes into account the decomposition   of the whole phase space into smaller subsets consisting of  points with the similar dynamical behaviors. And then, from a geometrical  and topological perspectives, we use the  fractal dimension and entropy-like quantity  to  describe the “size” of these subsets.  Such a procedure allows us  to obtain  more information  about the dynamics  and even recovers the partial dynamics. This  phenomenon is also known as \emph{multifractal rigidity}.  

 Typically, infinite entropy systems  exhibit rather complicated topological complexity. As a result, the concept of topological entropy fails to offer  more information about the dynamics of the system. In 2000,    Lindenstrauss and Weiss \cite{Lindenstrauss2000} introduced  the metric mean dimension, which is  a dynamical analogue of box dimension. The  measure-theoretic counterpart of metric mean dimension, called  upper and lower mean R\'enyi information dimensions, is due to Gutman and \'Spiewak\cite{Gutman2021}.  
The analogous variational principles for  metric mean dimension   can be found in the references \cite {Lindenstrauss2018, Gutman2021,ycz24, Shi2022}.  Given $x\in X$, denote by $$\mathcal{E}_{n}(x):=\frac{1}{n}\sum_{i=0}^{n-1}\delta_{f^ix}$$
the $n$-th empirical measure of  $x$, where $\delta_{x}$ is the Dirac  measure at $x$. It is well-known that,  in the   weak$^\ast$-topology,  the limit-point set  $V_{f}(x)$ of the sequence $\{\mathcal{E}_{n}(x)\}_{n\geq 1}$   is  a non-empty compact connected subset of $M_f(X)$ .
In particular, if  $V_f(x)=\{\mu\}$ for some  $f$-invariant measure $\mu$, then  $x$ is  called a generic point of $\mu$. Denote by $G_{\mu}$ the set of all generic points of $\mu$. The Birkhoff  ergodic theorem shows that  $\mu(G_{\mu})=1$ if and only if $\mu$ is  ergodic.


We aim to    study the   multifractal analysis of the set of generic points in infinite entropy systems. Let us  mention some   results  about the  topological entropy of  the set of generic points.  For every  ergodic measure $\mu$,  Bowen \cite{Bowen1973} proved that the Bowen topological entropy of $G_{\mu}$ equals  $h_{\mu}(f)$, i.e.,
$$h_{top}^B(f,G_{\mu})=h_{\mu}(f).$$
For the non-ergodic measure $\mu$,  one has $\mu(G_{\mu})=0$, and there  exist  the examples showing that  $G_{\mu}=\emptyset$ but $h_{\mu}(f)>0$. Therefore, Bowen's result fails for  non-ergodic measures. In 2007,  Pfister and Sullivan \cite {Pfister2007} proved that Bowen's  equality still holds for non-ergodic measures  within a more general framework.  More generally, given  a non-empty  closed  connected subset $K\subset M_f(X)$,  the saturated set $G_K$ associated with $K$  is defined by  $$G_K=\{ x\in X: V_f(x)=K \}.$$  By   introducing the $g$-almost product property  (e.g. $\beta$-shifts),  which  is strictly weaker than  the specification property,  and the  uniform separation property (e.g. expansive and more  generally  asymptotically $h$-expansive systems),  they  established a  variational principle for $G_K$: 
$$h_{top}^{B}(T,G_K)=\inf_{\mu \in K}h_{\mu}(f).$$
Furthermore, under only the assumption of the  $g$-almost  product property, they \cite{Pfister2007} showed  that  $h_{top}^B(f,G_{\mu})=h_{\mu}(f)$ for all  $\mu \in  M_f(X)$. For systems with the specification property  and  positive expansive property, Zhou, Chen and Cheng \cite{Zhou2012}  also obtained a (dual) variational principle for packing topological entropy of $G_K$:
$$h_{top}^{P}(f,G_K)=\sup_{\mu \in K}h_{\mu}(f).$$ 
Later, for systems with the  $g$-almost product property,  Hou,  Tian and Zhang \cite{Hou2023} proved that  the upper capacity  topological entropy of the saturated sets with more complicated dynamics  has full topological entropy, i.e.,
\begin{equation*}
	h_{top}^{UC}(f,G_{K}^{C})=h_{top}(f),
\end{equation*}
where  $G_{K}^{C} := G_{K} \cap \{x \in X : C_{f}(X) \subset \omega_{f}(x)\},$  $\omega_{f}(x):=\bigcap_{n=0}^{\infty} \overline{\bigcup_{k=n}^{\infty}\{f^{k}x\}}$ is the $\omega$-limit set of $x$, and  $C_{f}(X)$ is  the measure center given by $C_{f}(X):= \overline{\cup_{\mu\in M_{f}(X)}{\rm supp}{(\mu)}}$, ${\rm supp}(\mu)$ is the supported set of  $\mu$. Then, in the context of  the infinite entropy systems, the aforementioned work suggests the following questions for  saturated sets:\\
\emph{Questions:}
\begin{itemize}
	\item [(1)] For ergodic measures, does the metric mean dimension of the set of generic points  coincide with  the  mean R\'enyi information dimension?
	\item [(2)] For non-ergodic measures,  under the assumption of specification-like property, are there  certain variational principles for metric mean dimensions of saturated sets? If not, can  metric mean dimensions of saturated sets have the full metric mean dimension?
\end{itemize}

 For (1), the authors in \cite{ycz24} proved  that the packing metric mean dimension of the set of generic points  equals  the  mean R\'enyi information dimension. However, this equality  holds for Bowen metric mean dimension under the assumption of  an analogous Brin-Katok formula  within the context of infinite entropy systems (\cite[\S 5, Question,(1)]{ycz24}); for (2), since the  uniform separation property implies the system has finite topological entropy,  extending the  work of \cite{Pfister2007,Zhou2012, Hou2023} to the framework of infinite  entropy systems encounters some nontrivial challenges.  
 Actually,   since the asymptotic behavior of orbits ${\rm orb}_f(x)=\{ f^n(x):n\geq 0\}$ is a fundamental topic in ergodic theory,  the subsequent statements  of main results  involve   more general (mixed)  saturated sets. For simplicity,  we write 
 \begin{equation*}
 	G_K^{\omega,U}:=G_K\cap U\cap \{ x\in X: \omega_f(x)=X \},
 \end{equation*}
 where $U\subset X$ is a non-empty open subset, $K\subset M_f(X)$ is a non-empty closed connected (or convex) subset.  If  $G_K^{\omega,U}\not=\emptyset$, then there exists a transitive point in  the sufficiently ``small" open set $U$  such that $V_f(x)=K$. For systems with the specification property, we provide a complete answer to question (2).
 
  Our main results clarify that the saturated sets exhibit  different dynamical behaviors  in terms of different types of metric mean dimensions. There are certain variational principles for the Bowen and packing metric mean dimensions of saturated sets, as presented in Theorems \ref {thm 1.1}, \ref{thm 1.2} and \ref {thm 1.3}. 


Let  $\overline{\rm mdim}_M^{B}(f,Z,d)$, $\overline{\rm mdim}_M^{P}(f,Z,d)$ and $\overline{\rm mdim}_M^{UC}(f,Z,d)$ denote    the Bowen/packing/upper capacity  metric mean dimension of $Z$, respectively.

\begin{thm}\label{thm 1.1}
Let $(X,d,f)$ be a TDS with  the specification property. If $K\subseteq M_{f}(X)$ is a non-empty compact convex set, and $U\subset X$ is a  non-empty open set, then 
\begin{align*}
\overline{\rm mdim}_M^{P}\left(f,G_K,d\right)=&\overline{\rm mdim}_M^{P}(f,G_K^{\omega,U},d)
=\limsup_{\epsilon\to0}\frac{1}{|\log\epsilon|}
 \sup_{\mu\in K}
 \inf_{  {\rm diam}(\xi)<\epsilon }h_\mu(f,\xi).
\end{align*}
\end{thm}
\begin{thm}\label{thm 1.2}
Let $(X,d,f)$ be a TDS with  the specification property. If $K\subseteq M_{f}(X)$ is a non-empty compact connected set, and $U\subset X$ is a  non-empty open set, then 
\begin{align*}
\overline{\rm mdim}_M^{B}\left(f,G_K,d\right)=&\overline{\rm mdim}_M^{B}(f,G_K^{\omega,U},d)
=\limsup_{\epsilon\to0}\frac{1}{|\log\epsilon|}
\inf_{\mu\in K}
\inf_{  {\rm diam}(\xi)<\epsilon }h_\mu(f,\xi).
\end{align*}
\end{thm}

\begin{thm}\label{thm 1.3}
Let $(X,d,f)$ be a TDS with  the specification property. If $K\subseteq M_{f}(X)$ is a non-empty compact  convex set, then
\begin{align*}
\underline{\rm mdim}_{M}^{B}(f,G_{K},d)=\inf_{\mu \in K}\underline{\rm mrid}_{\mu}^B(f,K,d),\\
\overline{\rm mdim}_{M}^{P}(f,G_{K},d)=\sup_{\mu \in K}\overline{\rm mrid}_{\mu}^P(f,K,d),
\end{align*}
where  $\underline{\rm mrid}_{\mu}^B(f,K,d)$ and $\overline{\rm mrid}_{\mu}^P(f,K,d)$ denotes the Bowen and packing  R\'enyi information dimensions of $\mu$, respectively.
\end{thm}

We remark that  any  convex set in a topological vector space is automatically connected. Although  the $\mu$-measure of $G_{\mu}$ is zero for every non-ergodic measure  $\mu$, its topological complexity  can be sufficiently large,  that is, the upper capacity metric mean dimension of saturated sets  carries the full metric mean dimension, as stated in Theorem \ref {thm 1.4}.

\begin{thm}\label{thm 1.4}
Let $(X,d,f)$ be a TDS with  the specification property. Let $K\subseteq M_{f}(X)$ be a non-empty compact connected set, and $U\subset X$ be a non-empty open set. Then
\begin{align*}
\overline{\rm mdim}_M^{UC}\left(f,G_K,d\right)
=\overline{\rm mdim}_M^{UC}(f,G_K^{\omega,U},d)
=\overline{\rm mdim}_M(f,X,d),
\end{align*}
where $\overline{\rm mdim}_M^{UC}(f,Z,d)$  is the  upper  capacity metric mean dimension of $Z$. 
\end{thm}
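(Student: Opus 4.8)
Since $G_K\cap U\cap\{x:\omega_f(x)=X\}\subseteq G_K\subseteq X$ and the upper capacity metric mean dimension is monotone under inclusion (an $(n,\eta)$-separated subset of a set $Z'$ is an $(n,\eta)$-separated subset of any $Z\supseteq Z'$, so $s_n(Z',\eta)\le s_n(Z,\eta)$, where $s_n(Z,\eta)$ denotes the largest cardinality of an $(n,\eta)$-separated subset of $Z$), we get for free
$$\overline{\rm mdim}_M^{UC}(f,G_K^{\omega,U},d)\ \le\ \overline{\rm mdim}_M^{UC}(f,G_K,d)\ \le\ \overline{\rm mdim}_M^{UC}(f,X,d)\ =\ \overline{\rm mdim}_M(f,X,d).$$
Thus the whole content of the theorem is the reverse inequality $\overline{\rm mdim}_M^{UC}(f,G_K^{\omega,U},d)\ge\overline{\rm mdim}_M(f,X,d)$, and for this it suffices to prove that for every sufficiently small $\epsilon>0$ and every integer $N\ge1$,
$$s_{N+1}\big(G_K^{\omega,U},\tfrac{\epsilon}{2}\big)\ \ge\ s_N(X,\epsilon).$$
Indeed this gives $\overline{s}(f,G_K^{\omega,U},\epsilon/2)\ge\overline{s}(f,X,\epsilon)$ (divide by $N+1$, use $N/(N+1)\to1$), and then dividing by $|\log(\epsilon/2)|$ and letting $\epsilon\to0$, together with $|\log\epsilon|/|\log(\epsilon/2)|\to1$, yields the claim.

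The point is that $\overline{\rm mdim}_M^{UC}$ only involves a $\limsup$ over the length scale $n$, so it is enough to produce, at each resolution $\epsilon$, a single good value of $n$ carrying a large separated family inside the (thin) set $G_K^{\omega,U}$. We therefore concentrate all the available freedom in the \emph{initial} block of the orbit and pay for membership in $G_K^{\omega,U}$ with an \emph{infinite tail} that is invisible both to the count and to the separation. Fix $\epsilon>0$ small, put $\delta=\epsilon/4$ and let $m=m(\delta)$ be the gap provided by the specification property; fix $\tilde w\in U$ and shrink $\epsilon$ so that $B(\tilde w,\delta)\subset U$. Let $E_N\subset X$ be a maximal $(N,\epsilon)$-separated set, $|E_N|=s_N(X,\epsilon)$.

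For each $w\in E_N$ we build, by the standard inductive application of the specification property (as in \cite{Sigmund1974,Pfister2007,Hou2023}), a point $z_w\in X$ whose orbit $\delta$-shadows, with shadowing tolerances tending to $0$ along the construction, successively: the length-$1$ segment $(\tilde w)$; then the length-$N$ segment $(w,fw,\dots,f^{N-1}w)$; and then an infinite concatenation of orbit blocks whose empirical measures are close to the terms of a sequence $(\alpha_l)_{l\ge1}$ in $K$ chosen (possible because $K$ is compact and connected) so that $d(\alpha_l,\alpha_{l+1})\to0$ and $\{\alpha_l\}$ accumulates on all of $K$, the block lengths growing fast enough (in particular the first such block longer than $N$), interspersed with short visits to a fixed dense sequence of $X$, each point visited infinitely often and with vanishing error. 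By construction $z_w\in B(\tilde w,\delta)\subset U$; the Sigmund-type tracking argument gives $V_f(z_w)=K$, because the initial $(\tilde w)$- and $w$-segments occupy an asymptotically vanishing fraction of $[0,n)$ and hence neither create limit points of $(\mathcal E_n(z_w))_n$ off $K$ nor obstruct accumulation on $K$; the vanishing-error dense visits give $\omega_f(z_w)=X$. Hence $z_w\in G_K\cap U\cap\{x:\omega_f(x)=X\}$. Finally, if $w\ne w'$ in $E_N$ then $\max_{0\le t<N}d(f^tw,f^tw')>\epsilon$, so $\max_{0\le t<N}d(f^{1+t}z_w,f^{1+t}z_{w'})>\epsilon-2\delta\ge\epsilon/2$; thus $\{z_w:w\in E_N\}$ is an $(N+1,\epsilon/2)$-separated subset of $G_K^{\omega,U}$ of cardinality $s_N(X,\epsilon)$, which is the required inequality.

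The one genuinely delicate step is this construction of $z_w$. The specification mechanics are routine, but one must check that (i) prepending the arbitrary orbit segment $(w,\dots,f^{N-1}w)$, whose empirical measure may lie far from $K$, does not destroy $V_f(z_w)=K$ — this is exactly where we exploit that $n$ may be taken large relative to $N$ (so that segment is asymptotically negligible) and that the first $K$-approximating block is chosen longer than $N$; and (ii) the shadowing tolerances in the specification scheme must be driven to $0$ (via the usual telescoping device, treating the already-constructed finite orbit as a single long segment at each stage) in order to force $\omega_f(z_w)=X$, while remaining strictly below $\epsilon/4$ on the first $N+1$ coordinates so that the $(N+1,\epsilon/2)$-separation is preserved. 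Beyond this, the proof is just the elementary bookkeeping with $\limsup$'s in $n$ and in $\epsilon$ indicated above.
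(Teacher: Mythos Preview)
Your approach is correct in spirit and follows the same core idea as the paper---plant a large separated set in an initial orbit segment and use specification to append an infinite tail enforcing membership in $G_K^{\omega,U}$---but your route is more direct. The paper first invokes the Gutman--\'Spiewak variational principle (Theorem~\ref{thm 2.4}) to convert $\overline{\rm mdim}_M(f,X,d)$ into a supremum of Katok $\epsilon$-entropies over ergodic measures, picks a near-optimal ergodic $\mu^\ast$, extracts a large $(\mathcal N,3\epsilon^\ast)$-separated set from $\underline{h}^K_{\mu^\ast}$, and then builds a full Moran fractal $\mathbb{F}\subset G_K^{\omega,U}$ via the same multi-layer specification scheme used for Theorems~\ref{thm 1.1} and~\ref{thm 1.2}. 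You bypass all of this by working directly with maximal $(N,\epsilon)$-separated subsets of $X$, which is legitimate and cleaner for the upper-capacity case: no entropy distribution principle or fractal structure is needed when only a single good value of $n$ is required. The paper's detour has the advantage of recycling already-built machinery; yours is self-contained and avoids any measure-theoretic input.

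One bookkeeping slip: you place the $\tilde w$-segment at time $0$ and the $w$-segment immediately after, writing $d(f^{1+t}z_w,f^{1+t}z_{w'})>\epsilon-2\delta$. But specification inserts a gap $m=m(\delta)$ between consecutive segments, so the $w$-block actually occupies times $m,\dots,m+N-1$ (or $1+m,\dots,N+m$, depending on convention), and the separation occurs in $d_{N+m+1}$, not $d_{N+1}$. This is harmless---$m$ depends only on $\epsilon$, not on $N$, so $\tfrac{1}{N+m+1}\log s_N(X,\epsilon)$ still has the right $\limsup$---and the paper carries exactly the same offset, obtaining a $(1+m(\tfrac{\epsilon^\ast}{8})+\mathcal N,\epsilon^\ast)$-separated set inside $\mathbb{F}$. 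You should also make explicit that the telescoping tolerances $\sum_k\epsilon_k$ are chosen small enough (say $<\delta$) so that both $z_w\in B(\tilde w,2\delta)\subset U$ and the $(N+m+1,\epsilon/2)$-separation survive the passage to the limit point $z_w$.
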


As  an immediate consequence of Theorems \ref{thm 1.1}, \ref{thm 1.2} and \ref{thm 1.4}, the most interesting case  occurs when 
$K\subset M_f(X)$ consists of exactly a single point. The following Corollary \ref{cor 1.5}  extends the  result  \cite[Theorem 1.1]{ycz24} to all non-ergodic measures  for systems with  the specification property.   

\begin{cro}\label{cor 1.5}
Let $(X,d,f)$ be a TDS with  the specification property.
Then  for  every  $\mu\in M_f(X)$,
\begin{align*}
&\overline{\rm mdim}^{B}_M(f,G_\mu,d)= 
\overline{\rm mdim}^{P}_M(f,G_\mu,d)=
\limsup_{\epsilon\to0}\frac{1}{|\log\epsilon|}\inf_{{\rm diam}(\xi)<\epsilon}\limits h_\mu(f,\xi),\\
&\overline{\rm mdim}_M^{UC}(f,G_\mu,d)=\overline{\rm mdim}_M(f,X,d).
\end{align*}
\end{cro}

Except  Theorem \ref{thm 1.3}, all theorems mentioned above are valid for the corresponding lower metric mean dimensions by considering changing the $\limsup_{\epsilon\to 0}$ into $\liminf_{\epsilon\to 0}$. Besides, it becomes unclear whether the  aforementioned results for weaker specification-like property.

The paper is organized as follows. In section \ref{sec 2}, we recall the precise definitions of metric mean dimensions and  the associated properties, and two types of measure-theoretic $\epsilon$-entropies, and the specification property of TDSs. In section \ref{sec 3}, we  prove the main results. In section \ref{sec 4}, we consider some applications of  the main results, and apply it  to investigate the level sets of the Birkhoff average of a continuous potential, Li-Yorke chaos in finite entropy systems and  the metric mean dimensions of generic points of the full shift over  compact metric spaces.

\section{Preliminaries}\label{sec 2}

In this section, we recall the concepts of metric mean dimensions in both topological and measure-theoretic situations, and state the  precise definition  of the specification  property in  dynamical systems.

\subsection{Three types of metric mean dimensions}

In this subsection, we review the definitions of upper capacity  metric mean dimension of subsets\cite{Lindenstrauss2000},  Bowen  metric mean dimension\cite{lp21, Wang2021} and packing metric mean dimension \cite{ycz22} of subsets.

Let $(X,d,f)$ be a TDS.
For any natural number $n\in \mathbb{N}$, we define  the $n$-th Bowen metric $d_{n}$ on $X$ as
\begin{equation*}
d_{n}(x,y):=\max_{0\le i\le n-1}d(f^{i}x,f^{i}y),
\end{equation*}
where  $x,y \in X$. Then the Bowen open ball and closed ball centered at $x$ with radius $\epsilon$ in the metric $d_n$ are given by
\begin{align*}
B_{n}(x,\epsilon):&=\{y\in X:d_{n}(x,y)<\epsilon\},\\
\overline{B} _{n}(x,\epsilon):&=\{y\in X:d_{n}(x,y)\le \epsilon\}
\end{align*}
respectively.
 
Let $Z$ be a non-empty subset of $X$. Fix $\epsilon>0$. A set $E\subset Z$ is said to be an \emph{$(n,\epsilon)$-separated set of $Z$} if  any distinct $x,y\in E$  implies $d_{n}(x,y)>\epsilon$.  Denote by $s_{n}(Z,\epsilon)$  the largest cardinality of $(n,\epsilon)$-separated sets of $Z$. The  \emph{$\epsilon$-upper capacity topological entropy of $Z$} is given by
\begin{equation*}
h_{top}^{UC}(f,Z,\epsilon)=\limsup_{n\to\infty}\frac{1}{n}\log s_{n}(Z,\epsilon).
\end{equation*}
 Recall that  the  \emph{upper capacity  topological entropy of $Z$} is  the limit of  $\epsilon$-upper capacity topological entropy of $Z$ as $\epsilon$ goes to $0$, i.e., $$h_{top}^{UC}(f,Z)=\lim_{\epsilon\to0}h_{top}^{UC}(f,Z,\epsilon)=\sup_{\epsilon >0}h_{top}^{UC}(f,Z,\epsilon).$$
To get more information  about the dynamics  of  infinite entropy systems, analogous to the definition of box dimension in fractal  geometry,  we formulate a refined entropy-like quantity   using  the $\epsilon$-upper capacity topological entropy.
\begin{defn} 
The upper capacity metric mean dimension of $Z$   is defined by
\begin{equation*}
\overline{\rm mdim}_{M}^{UC}(f,Z,d):=\limsup_{\epsilon\to 0}\frac{h_{top}^{UC}(f,Z,\epsilon)}{|\log\epsilon|}.
\end{equation*}
\end{defn}

If $Z=X$,  the upper capacity metric mean dimension of $X$ is reduced to the metric mean dimension introduced by Lindenstrauss-Weiss \cite{Lindenstrauss2000}; in this case, we use the notation $h_{top}(f,X,\epsilon)$ instead of $h_{top}^{UC}(f,X,\epsilon)$,  and $\overline{\rm mdim}_{M}(f,X,d)$ instead of $\overline{\rm mdim}_{M}^{UC}(f,X,d)$, respectively.   Such a formulation extends the concept of  metric mean dimension to any subsets of the phase space, which is not necessarily compact or  $f$-invariant. Notice that  the (upper capacity) metric mean dimension  is defined by measuring how fast the  $\epsilon$-upper capacity topological entropy  diverges to $\infty$  as $\epsilon \to 0$. That is, for sufficiently small $\epsilon >0$, we may think of  
 $$h_{top}^{UC}(f,Z,\epsilon)\approx\overline{\rm mdim}_{M}^{UC}(f,Z,d)\cdot|\log \epsilon|.$$
 It is easy  to see that any system with finite  topological entropy has zero  metric mean dimension. This shows  that the metric mean dimension is a suitable ``measure" for capturing the dynamics of the  infinite entropy systems.

 The  Bowen topological entropy\cite{Bowen1973} and packing topological entropy\cite{Feng2012} in dimension theory,  which are defined by Carath\'eodory-Pesin structures, are the analogues of  the  Hausdorff dimension and packing dimension  in fractal geometry. The corresponding dimensional characterizations for metric mean dimension, which have been introduced in \cite{lp21,ycz22,Wang2021} through  Carath\'eodory-Pesin structures, are the following Bowen and packing metric mean dimensions.

 For $Z\subset X$, $s>0$, $N\in\mathbb{N}$ and $\epsilon>0$, we put
 \begin{equation*}
 M(Z,s,N,\epsilon)=\inf\left\{
 \sum_{i\in I}\exp(-n_is)
 \right\},
 \end{equation*}
 where the infimum is taken over all finite or countable families $\{B_{n_i}(x_i,\epsilon)\}_{i\in I}$ such that $Z\subset \cup_{i \in I} B_{n_i}(x_i,\epsilon)$ with $x_i \in X, n_i\geq N$ for all $i \in I$.
 
 The quantity $M(Z,s,N,\epsilon)$ is non-decreasing as $N$ increases. So  the limit
 \begin{equation*}
 M(Z,s,\epsilon)=\lim_{N\to\infty}M(Z,s,N,\epsilon)
 \end{equation*}
 exists.
 There is a  critical value  of  the parameter $s$, which we call $\epsilon$-Bowen topological entropy,  such that   $M(Z,s,\epsilon)$ jumps  from $\infty$ to $0$, i.e.,
 \begin{align*}
 h_{top}^B(f,Z,\epsilon)&=\inf\{s>0:M(Z,s,\epsilon)=0 \}\\
 &=\sup\{s>0:M(Z,s,\epsilon)=\infty \}.
 \end{align*} 

\begin{defn}
The Bowen upper metric mean dimension of $Z$ is defined by 
\begin{equation*}
\overline{\rm mdim}^B_M(f,Z,d)=\limsup_{\epsilon\to 0}\frac{h_{top}^B(f,Z,\epsilon)}{|\log\epsilon|}.
\end{equation*}
\end{defn}

For $Z\subseteq X$, $s>0$, $N\in\mathbb{N}$ and $\epsilon>0$, we put
\begin{equation*}
P(Z,s,N,\epsilon)=\sup\left\{\sum_{i\in I}\exp(-sn_{i})\right\},
\end{equation*}
where the supremum is taken over all finite or countable pairwise disjoint families $\{\overline{B} _{n_{i}}(x_{i},\epsilon)\}_{i\in I}$ with $x_{i}\in Z $, $n_{i}\ge N$ for all $i\in I$. 

Since the quantity $P(Z,s,N,\epsilon)$ is non-increasing  as $N$ increases, and thus the following limit exists:
\begin{equation*}
P(Z,s,\epsilon)=\lim_{N\to\infty}P(Z,s,N,\epsilon).
\end{equation*}
Observe that the quantity $ P(Z,s,\epsilon)$   does not satisfy the countable additive property with respect to $Z$. So the modifications are needed to obtain this property:
\begin{equation*}
\mathcal{P}(Z,s,\epsilon):=\inf\left\{\sum_{i=1}^{\infty}P(Z_{i},s,\epsilon):\bigcup_{i=1}^{\infty}Z_{i}\supseteq Z\right\}.
\end{equation*}
There is a  critical value  of  the parameter $s$, which we call $\epsilon$-packing topological entropy,  such that   $\mathcal{P}(Z,s,\epsilon)$ jumps  from $\infty$ to $0$. That is,  the  critical value is given by  
\begin{align*}
h_{top}^P(f,Z,\epsilon)&=\inf\{s>0:\mathcal{P}(Z,s,\epsilon)=0 \}\\
&=\sup\{s>0:\mathcal{P}(Z,s,\epsilon)=\infty \}.
\end{align*} 

\begin{defn}
The packing  upper metric mean dimension of $Z$ is defined by 
\begin{equation*}
\overline{\rm mdim}_{M}^{P}(f,Z,d)=\limsup_{\epsilon\to0}\frac{h_{top}^{P}(f,Z,\epsilon)}{|\log\epsilon|}.
\end{equation*}
\end{defn}

One can  replace $\limsup_{\epsilon\to 0}$ by $\liminf_{\epsilon\to 0}$ in above definitions to  obtain  the   corresponding  three types of lower  metric mean dimensions on subsets, which are denoted by  $\underline{\rm mdim}_{M}^{S}(f,Z,d)$ with $S\in \{UC,B,P\}$.  If $\underline{\rm mdim}_{M}^{S}(f,Z,d)=\overline{\rm mdim}_{M}^{S}(f,Z,d)$, we call the common value ${\rm mdim}_{M}^{S}(f,Z,d)$ the (capacity/Bowen/packing) metric mean dimension of $Z$.

 We summarize some fundamental properties of metric mean dimensions derived in  \cite[Proposition 3.4]{ycz22} and  \cite[Proposition 2.8]{ycz24c}.

\begin{prop}\label{prop 2.1}
$(1)$   If $Z_1\subset Z_2 \subset X$, then for   each $S\in \{UC,B,P\}$,
\begin{align*}
\overline{\rm mdim}_M^{S}(f,Z_1,d)\leq \overline{\rm mdim}_M^{S}(f,Z_2,d).
\end{align*}

$(2)$ If $Z=\bigcup_{i=1}^{\infty}Z_{i}$,  then for every $\epsilon >0$,
\begin{align*}
 h_{top}^{B}(f,Z,\epsilon)&=\sup_{i\ge1}h_{top}^{B}(f,Z_{i},\epsilon),\\
 h_{top}^{P}(f,Z,\epsilon)&=\sup_{i\ge1}h_{top}^{P}(f,Z_{i},\epsilon),
\end{align*}
and  $ h_{top}^{UC}(f,Z,\epsilon)=\max_{1\leq i\leq N}h_{top}^{UC}(f,Z_{i},\epsilon)$ if  $Z=\bigcup_{i=1}^{N}Z_{i}$.

$(3)$ If  $Z=\bigcup_{i=1}^{N}Z_{i}$, then  for   each $S\in \{UC,B,P\}$,
\begin{align*}
\overline{\rm mdim}_M^{S}(f,Z,d)&= \max_{1\leq i\leq N} \overline{\rm mdim}_M^{S}(f,Z_i,d).
\end{align*}

$(3)$ If $Z$ is a non-empty subset of $X$,  then 
for every  $0<\epsilon<1$, we have
$$h_{top}^{B}(f,Z,3\epsilon)\leq h_{top}^{P}(f,Z,\epsilon)\leq h_{top}^{UC}(f,Z,\epsilon).$$ Consequently,
$\overline{\rm mdim}_{M}^{B}(f,Z,d)\leq \overline{\rm mdim}_{M}^{P}(f,Z,d)\leq \overline{\rm mdim}_{M}^{UC}(f,Z,d)$. 

Furthermore, if $Z$ is $f$-invariant and compact, then the  three types of metric mean dimensions coincide.
\end{prop}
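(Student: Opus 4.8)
The plan is to dispatch the four items in turn: the first three are ``soft'' consequences of the definitions of the Carath\'eodory--Pesin set functions $M(\cdot,s,N,\epsilon)$, $P(\cdot,s,N,\epsilon)$, $\mathcal{P}(\cdot,s,\epsilon)$ and of the quantities $s_n(\cdot,\epsilon)$, while the last item — the comparison $h_{top}^B(f,Z,3\epsilon)\le h_{top}^P(f,Z,\epsilon)\le h_{top}^{UC}(f,Z,\epsilon)$ — is the only place where a genuine argument (a Vitali-type covering lemma for Bowen balls) is needed.

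For item (1): if $Z_1\subset Z_2$ then every $(n,\epsilon)$-separated subset of $Z_1$ is one of $Z_2$, so $s_n(Z_1,\epsilon)\le s_n(Z_2,\epsilon)$ and the $\overline{\rm mdim}_M^{UC}$ inequality follows after dividing by $n$, taking $\limsup_n$, dividing by $|\log\epsilon|$ and taking $\limsup_{\epsilon\to0}$; every open Bowen-ball cover of $Z_2$ covers $Z_1$, so $M(Z_1,s,N,\epsilon)\le M(Z_2,s,N,\epsilon)$ and hence $h_{top}^B(f,Z_1,\epsilon)\le h_{top}^B(f,Z_2,\epsilon)$; and every family $\{Z_i\}$ admissible for $\mathcal{P}(Z_2,s,\epsilon)$ is admissible for $\mathcal{P}(Z_1,s,\epsilon)$, giving $\mathcal{P}(Z_1,s,\epsilon)\le\mathcal{P}(Z_2,s,\epsilon)$ and the packing inequality. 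For item (2): ``$\ge$'' is item (1); for ``$\le$'' in the Bowen case, fix $s>\sup_i h_{top}^B(f,Z_i,\epsilon)$, so $M(Z_i,s,\epsilon)=0$ for each $i$, hence given $\eta>0$ and $N$ one covers each $Z_i$ by open Bowen balls of orders $\ge N$ with weight-sum $<\eta 2^{-i}$, and the union of these covers shows $M(Z,s,N,\epsilon)<\eta$ for every $N$; letting $\eta\to0$ and $s$ decrease to the supremum gives equality. The packing equality uses instead the countable subadditivity $\mathcal{P}(\bigcup_iZ_i,s,\epsilon)\le\sum_i\mathcal{P}(Z_i,s,\epsilon)$, which is built into the infimum defining $\mathcal{P}$, so $\mathcal{P}(Z_i,s,\epsilon)=0$ for all $i$ forces $\mathcal{P}(Z,s,\epsilon)=0$. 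The $h_{top}^{UC}$ statement for finite unions follows from $\max_i s_n(Z_i,\epsilon)\le s_n(\bigcup_{i\le N}Z_i,\epsilon)\le N\max_i s_n(Z_i,\epsilon)$ upon taking $\tfrac1n\log$ and $\limsup_n$ (the term $\tfrac1n\log N\to0$) together with the fact that $\limsup$ commutes with finite maxima; the first item (3) is then obtained by dividing the identities of item (2) by $|\log\epsilon|$ and taking $\limsup_{\epsilon\to0}$, again using that $\limsup$ commutes with finite maxima.

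The substantive point is the second item (3). For $h_{top}^P(f,Z,\epsilon)\le h_{top}^{UC}(f,Z,\epsilon)$: given a pairwise disjoint family $\{\overline{B}_{n_i}(x_i,\epsilon)\}$ with $x_i\in Z$ and $n_i\ge N$, for each fixed $n$ the centers with $n_i=n$ form an $(n,\epsilon)$-separated subset of $Z$ (since $x\in\overline{B}_n(x,\epsilon)$, two such balls with centers at $d_n$-distance $\le\epsilon$ would intersect), so $P(Z,s,N,\epsilon)\le\sum_{n\ge N}s_n(Z,\epsilon)e^{-sn}$; if $s>h_{top}^{UC}(f,Z,\epsilon)$ this tail tends to $0$ as $N\to\infty$, giving $\mathcal{P}(Z,s,\epsilon)\le P(Z,s,\epsilon)=0$ and hence $h_{top}^P(f,Z,\epsilon)\le s$. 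For $h_{top}^B(f,Z,3\epsilon)\le h_{top}^P(f,Z,\epsilon)$ (trivial if the right side is infinite), fix $h_{top}^P(f,Z,\epsilon)<t<s$; then $\mathcal{P}(Z,t,\epsilon)=0$, so for each $\eta>0$ there is a countable cover $Z\subseteq\bigcup_iZ_i$ with $\sum_i P(Z_i,t,N_i,\epsilon)<\eta$ for suitable orders $N_i$. Given a target order $N$, set $m_i=\max\{N,N_i\}$ and choose, inside $\{\overline{B}_{m_i}(x,\epsilon):x\in Z_i\}$, a disjoint subfamily $F_i$ of maximum cardinality (finite, as $|F_i|\le s_{m_i}(X,\epsilon)<\infty$ by compactness of $X$); maximality forces $Z_i\subseteq\bigcup_{x\in F_i}B_{m_i}(x,3\epsilon)$, since any $y\in Z_i$ has $d_{m_i}(y,x)\le2\epsilon<3\epsilon$ for some $x\in F_i$. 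Because $\{\overline{B}_{m_i}(x,\epsilon):x\in F_i\}$ is admissible for $P(Z_i,t,N_i,\epsilon)$, we get $|F_i|e^{-tm_i}\le P(Z_i,t,N_i,\epsilon)$, so this open Bowen $3\epsilon$-ball cover of $Z_i$ of orders $\ge N$ has weight-sum $|F_i|e^{-sm_i}\le P(Z_i,t,N_i,\epsilon)e^{(t-s)m_i}\le P(Z_i,t,N_i,\epsilon)$; summing over $i$ yields $M(Z,s,N,3\epsilon)<\eta$ for every $N$, hence $M(Z,s,3\epsilon)=0$ and $h_{top}^B(f,Z,3\epsilon)\le s$, and letting $s\downarrow t\downarrow h_{top}^P(f,Z,\epsilon)$ closes it.

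Finally, dividing the chain $h_{top}^B(f,Z,3\epsilon)\le h_{top}^P(f,Z,\epsilon)\le h_{top}^{UC}(f,Z,\epsilon)$ by $|\log\epsilon|$, using $|\log3\epsilon|/|\log\epsilon|\to1$ and that $\limsup_{\epsilon\to0}$ is unchanged under $\epsilon\mapsto3\epsilon$, gives $\overline{\rm mdim}_M^B(f,Z,d)\le\overline{\rm mdim}_M^P(f,Z,d)\le\overline{\rm mdim}_M^{UC}(f,Z,d)$. When $Z$ is compact and $f$-invariant one only needs $\overline{\rm mdim}_M^{UC}(f,Z,d)\le\overline{\rm mdim}_M^B(f,Z,d)$, which is the classical fact that Bowen and upper-capacity metric mean dimensions coincide on compact invariant sets (cover $Z$ by finitely many $\epsilon$-balls by compactness and use $f$-invariance with a counting/pigeonhole estimate to bound $s_n(Z,2\epsilon)$ by the size of a Bowen $\epsilon$-cover, then pass to $\limsup_{\epsilon\to0}$), as in the arguments behind \cite{ycz22,ycz24c}. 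I expect the Vitali-type step in item (3) — turning a packing by disjoint closed Bowen balls into a covering by dilated open Bowen balls while keeping the exponential weights controlled uniformly in the order $N$ — to be the main technical point; the rest is bookkeeping with the Carath\'eodory--Pesin definitions.
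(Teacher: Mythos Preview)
Your proof is correct. The paper itself does not supply a proof of this proposition; it merely records the statement as a summary of fundamental properties and cites \cite[Proposition 3.4]{ycz22} and \cite[Proposition 2.6]{ycz24c} for the arguments. What you have written is a clean, self-contained verification of those cited facts: items (1)--(3) follow from the monotonicity and (countable) subadditivity built into the Carath\'eodory--Pesin set functions, and your handling of the comparison chain $h_{top}^B(f,Z,3\epsilon)\le h_{top}^P(f,Z,\epsilon)\le h_{top}^{UC}(f,Z,\epsilon)$ via the maximal-disjoint-family (Vitali-type) step is exactly the standard route taken in the references. There is nothing to correct.
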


Given  a certain subset of phase space, in general it is not easy to  calculate its precise Bowen and packing metric mean dimensions. The  \emph{Entropy Distribution Principles} allow us to  get a lower bound for  $\epsilon$-Bowen and $\epsilon$-packing topological entropies of  certain fractal-like sets, and hence yields the lower bounds of  metric mean dimension of the fractal-like sets.

 The Entropy Distribution Principles  of  $\epsilon$-Bowen and packing topological entropies  are stated  as  follows:

\begin{lem}{\rm \cite[Lemma 13]{Backes2023}}\label{bowdis}
Let $(X,d,f)$  be a TDS and $\epsilon>0$. Suppose that $Z$  is a  Borel set of $X$ and $0<s<\infty$. If there exist   $\mu \in M(X)$ and  a constant $C>0$  such that $\mu(Z)>0$ and $\mu(B_n(x,\epsilon) )\leq Ce^{-ns}$ for every each Bowen ball  $B_n(x,\epsilon)\cap Z\neq\emptyset$, 
then $$h_{top}^B(f,Z,\epsilon)\geq s.$$
\end{lem}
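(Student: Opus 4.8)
The plan is to show directly that the outer measure $M(Z,s,\epsilon)$ is bounded below by a strictly positive constant, since by the very definition of the $\epsilon$-Bowen topological entropy this forces $h_{top}^B(f,Z,\epsilon)\ge s$. Concretely, I would prove $M(Z,s,\epsilon)\ge \mu(Z)/C>0$ and then read off the conclusion.

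First I would fix $N\in\mathbb{N}$ and take an arbitrary finite or countable family $\{B_{n_i}(x_i,\epsilon)\}$ with $x_i\in X$, $n_i\ge N$, and $Z\subset\bigcup_i B_{n_i}(x_i,\epsilon)$. Discarding the balls that do not meet $Z$ leaves a family that still covers $Z$ and can only decrease the sum $\sum_i\exp(-n_is)$, so I may assume $B_{n_i}(x_i,\epsilon)\cap Z\ne\emptyset$ for every $i$. Hypothesis $(1)$, applied with $n=n_i$ (this is exactly where one needs $C$ to be independent of the depth $n$, since the $n_i$ occurring in a cover are unbounded), gives $\mu(B_{n_i}(x_i,\epsilon))\le C e^{-n_i s}$, i.e. $e^{-n_i s}\ge C^{-1}\mu(B_{n_i}(x_i,\epsilon))$. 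Summing over $i$, using countable subadditivity of $\mu$ together with the covering property and hypothesis $(2)$:
\begin{equation*}
\sum_i \exp(-n_i s)\ \ge\ \frac{1}{C}\sum_i \mu\big(B_{n_i}(x_i,\epsilon)\big)\ \ge\ \frac{1}{C}\,\mu\Big(\bigcup_i B_{n_i}(x_i,\epsilon)\Big)\ \ge\ \frac{\mu(Z)}{C}\ >\ 0.
\end{equation*}
Taking the infimum over all admissible covers yields $M(Z,s,N,\epsilon)\ge \mu(Z)/C$ for every $N$, hence $M(Z,s,\epsilon)=\lim_{N\to\infty}M(Z,s,N,\epsilon)\ge \mu(Z)/C>0$.

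Finally I would extract the statement from the definition of the critical value. Since $M(Z,\cdot,\epsilon)$ is non-increasing and $M(Z,s,\epsilon)>0$, the parameter $s$ lies in the set $\{t>0:M(Z,t,\epsilon)=\infty\}$ closure side rather than in $\{t>0:M(Z,t,\epsilon)=0\}$; more precisely $s$ does not belong to $\{t>0:M(Z,t,\epsilon)=0\}$, which is an interval unbounded to the right whose left endpoint is $h_{top}^B(f,Z,\epsilon)$, so $s\le h_{top}^B(f,Z,\epsilon)$, as claimed. I do not expect a genuine obstacle: the only points requiring care are (a) that the covers in the definition of $M$ allow centers $x_i$ outside $Z$, so one must first discard the balls disjoint from $Z$ before invoking hypothesis $(1)$, and (b) the uniformity of $C$ in $n$; the case $s=0$ is trivial, and $\mu$ need only be Borel, not $f$-invariant.
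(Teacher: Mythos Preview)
Your argument is correct and is exactly the standard proof of the entropy distribution principle. Note, however, that the paper does not supply its own proof of this lemma: it simply cites \cite[Lemma 13]{Backes2023} and moves on. So there is nothing in the paper to compare against beyond observing that your direct computation---discarding balls disjoint from $Z$, applying hypothesis~(1) termwise, and using countable subadditivity to get $M(Z,s,\epsilon)\ge\mu(Z)/C>0$---is precisely the argument one finds in the cited reference (and in the many earlier papers on which it is modeled). Your remarks about the two points needing care, namely that centers may lie outside $Z$ and that $C$ must be uniform in $n$, are accurate and well placed.
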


\begin{lem}\label{lem5}
Let $(X,d,f)$  be a TDS and $\epsilon>0$. Suppose that $Z$  is a  Borel set of $X$ and $0<s<\infty$. If there exist   $\mu \in M(X)$, a constant $C>0$ and  a strictly increasing sequence $\{n_i\}\subset\mathbb{N}$  such that $\mu(Z)>0$ and   $\mu(B_{n_{i}}(x,\epsilon))\leq Ce^{-n_{i}s}$ for each $i$ and for any $ x\in Z$,
then $$h^{P}_{top}(f, Z,\frac{\epsilon}{6})\ge s.$$
\end{lem}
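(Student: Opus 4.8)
The plan is to mimic the standard proof of the Entropy Distribution Principle for packing topological entropy (as in Feng--Huang \cite{Feng2012}) while keeping careful track of the fixed scale $\epsilon$, since here we cannot let $\epsilon\to 0$ and must instead work with the radii $\epsilon$, $\frac{\epsilon}{3}$, $\frac{\epsilon}{6}$ that appear in the covering-versus-packing comparisons. The key reduction is the following standard observation: it suffices to show that for every Borel set $Z'\subseteq Z$ with $\mu(Z')>0$ we have $P(Z',s',\tfrac{\epsilon}{6})=\infty$ for every $s'<s$; indeed, if $Z=\bigcup_{j}Z_j$ is any countable cover, then $\mu(Z_j)>0$ for at least one $j$ (since $\mu(Z)>0$), whence $\sum_j P(Z_j,s',\tfrac{\epsilon}{6})=\infty$, and taking the infimum over covers gives $\mathcal P(Z,s',\tfrac{\epsilon}{6})=\infty$, so $h^P_{top}(f,Z,\tfrac{\epsilon}{6})\ge s'$ for all $s'<s$, i.e. $h^P_{top}(f,Z,\tfrac{\epsilon}{6})\ge s$.

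So fix $Z'\subseteq Z$ Borel with $\mu(Z')>0$ and fix $s'<s$. The plan is to build, for each prescribed $N$, a finite pairwise disjoint family of closed Bowen balls $\{\overline B_{n_{i_k}}(x_k,\tfrac{\epsilon}{6})\}_k$ with centers in $Z'$, all orders $n_{i_k}\ge N$, and with $\sum_k \exp(-s' n_{i_k})$ bounded below by a positive constant independent of $N$ (in fact one can push it to be large). First I would note that, by a Vitali-type $5r$-covering argument at the fixed scale: given any collection of closed Bowen balls $\overline B_{m}(x,\tfrac{\epsilon}{6})$ with centers in a set $A$ and $m\ge N$, one can extract a countable pairwise disjoint subcollection $\{\overline B_{m_k}(x_k,\tfrac{\epsilon}{6})\}$ such that the inflated balls $\{B_{m_k}(x_k,\epsilon)\}$ cover $A$ — here one uses that if two Bowen balls $\overline B_{m}(x,\tfrac{\epsilon}{6})$ and $\overline B_{m'}(x',\tfrac{\epsilon}{6})$ with $m\le m'$ intersect, then $\overline B_{m'}(x',\tfrac{\epsilon}{6})\subseteq B_{m}(x,\epsilon)$, which is the Bowen-metric analogue of the metric $5r$-lemma (with constant $3$ rather than $5$). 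Then, using hypothesis (1), $\mu\big(\bigcup_k B_{m_k}(x_k,\epsilon)\big)\le C\sum_k e^{-m_k s}\le C\sum_k e^{-m_k s'}$, and since these inflated balls cover $A$ and we will arrange $\mu(A)>0$, we get $\sum_k e^{-m_k s'}\ge \mu(A)/C>0$. Truncating the (convergent-tailed) sum to finitely many terms still keeps it bounded below, giving the desired finite packing. The remaining point is to guarantee centers in $Z'$ and orders $\ge N$: one restricts attention, for each $i$ large enough that $n_i\ge N$, to the family $\{\overline B_{n_i}(x,\tfrac{\epsilon}{6}): x\in Z'\}$; since $\mu(Z')>0$ we may take $A=Z'$ (or a full-measure-in-$Z'$ subset if needed for measurability of the selection), run the covering argument at order $n_i$, and obtain $P(Z',s',N,\tfrac{\epsilon}{6})\ge \mu(Z')/C$. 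Letting $N\to\infty$ gives $P(Z',s',\tfrac{\epsilon}{6})\ge \mu(Z')/C>0$; but $P(Z',s',\tfrac\epsilon6)$ is either $0$ or $+\infty$ by the usual scaling of $\exp(-s'n)$ versus $\exp(-sn)$ (one rescales the order threshold $N$), forcing $P(Z',s',\tfrac{\epsilon}{6})=\infty$, as required.

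The main obstacle, and the step deserving the most care, is the Bowen-metric Vitali covering lemma at a \emph{fixed} scale: one must verify the inclusion ``if $\overline B_{m}(x,\tfrac{\epsilon}{6})\cap \overline B_{m'}(x',\tfrac{\epsilon}{6})\ne\emptyset$ and $m\le m'$ then $\overline B_{m'}(x',\tfrac{\epsilon}{6})\subseteq B_{m}(x,\epsilon)$'' — which uses the triangle inequality for $d_m$ together with the fact that $d_{m}\le d_{m'}$ for $m\le m'$ — and then run the greedy selection by picking balls of successively larger (or, in the dual formulation, smaller) order. The bookkeeping of which radius ($\epsilon$, $\tfrac{\epsilon}{3}$, or $\tfrac{\epsilon}{6}$) appears where, and the fact that the argument needs balls of the \emph{same} order $n_i$ to produce a legitimate contribution to $P(\cdot,\cdot,N,\cdot)$, are exactly the places where the fixed-scale setting differs from the classical $\epsilon\to 0$ packing-dimension proof; everything else is a routine transcription of Feng--Huang.
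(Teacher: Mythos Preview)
Your approach is essentially the same as the paper's: reduce to showing $P(E,s',\tfrac{\epsilon}{6})=\infty$ for every $E\subset Z$ with $\mu(E)>0$ and every $s'<s$, then use a $5r$-type covering argument at a fixed order $n_i$ to extract a disjoint packing whose $\epsilon$-inflated balls cover $E$, and bound below using hypothesis~(1). Two places where the paper is more direct than your write-up: first, the detour through balls of \emph{varying} orders $m\le m'$ is unnecessary, since you ultimately (and correctly) apply the argument at a single fixed order $n_i$; the paper simply invokes the $5r$-lemma in the metric $d_{n_i}$ at radius $\tfrac{\epsilon}{5}$ (so the closed $\tfrac{\epsilon}{6}$-balls stay disjoint and the $\epsilon$-balls cover). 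Second, your final step --- ``$P(Z',s',\tfrac{\epsilon}{6})$ is either $0$ or $+\infty$'' --- is not true as stated (the premeasure $P$ need not satisfy a $0$--$\infty$ law at every parameter); the paper avoids this by extracting the exponential factor directly: since all balls have the same order $n_i$,
\[
\sum_j e^{-n_i s'} \;=\; e^{n_i(s-s')}\sum_j e^{-n_i s}\;\ge\; \frac{e^{n_i(s-s')}}{C}\,\mu(E)\xrightarrow[i\to\infty]{}\infty,
\]
which gives $P(E,s',\tfrac{\epsilon}{6})=\infty$ in one line. Your hinted ``rescale the order threshold $N$'' does encode exactly this, so the fix is immediate.
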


\begin{proof}
  By definition of the $\epsilon$-packing topological entropy, it is enough to prove that for  every sufficiently small $\delta>0$, $$P\left(E,s-\delta,\frac{\epsilon}{6}\right)=\infty$$  for any Borel subset $E\subset Z$ with $\mu(E)>0$.
  
   For each $i$, consider the  cover  $\mathcal{A}_i=\{ B_{n_i}(x,\frac{\epsilon}{5}): x\in E \}$ of  $E$. Then, by  $5r$-covering lemma \cite[Theorem 2.1]{Mattila1995}, there exists a countable pairwise disjoint sub-collection $\{ B_{n_i}(x_j,\frac{\epsilon}{5})\}_{j\in J}$, where $J$ is a index set,  such that $E\subset  \bigcup_{j \in J}B_{n_i}(x_j,\epsilon).$ In particular,  the collection of closed Bowen balls $\{\overline{B}_{n_i}(x_j,\frac{\epsilon}{6}) \}_{j\in J}$ is pairwise disjoint.
Hence,  one has
\begin{align*}
P\left(E,s-\delta,n_i,\frac{\epsilon}{6}\right)
\geq
\sum_{j\in J}\exp[ -n_i(s-\delta) ]\geq\frac{e^{n_i\delta}}{C}
\sum_{j\in J}\mu( B_{n_i}(x_j,\epsilon) )\geq 
\frac{e^{n_i\delta}}{C}\mu(E).
\end{align*}
 Letting $i\to\infty$, we have $P(E,s-\delta,\frac{\epsilon}{6})=\infty$. This shows  that  $\mathcal{P}(Z,s-\delta,\frac{\epsilon}{6})=\infty$, and hence  $h^{P}_{top}(f, Z,\frac{\epsilon}{6})\ge s$ by letting $\delta \to 0$.
\end{proof}

\subsection{Measure-theoretic $\epsilon$-entropy of  invariant measures}
In this subsection, we recall  the definitions of Kolmogorov-Sinai $\epsilon$-entropy and
Katok  $\epsilon$-entropy of invariant measures.

Let $C(X)$ denote the  space of  real-valued continuous functions  on $X$,  equipped with the supremum norm $||\cdot||$.  By virtue of the Riesz representation theorem for compact metric spaces, we are able  to endow  $M(X)$ with the weak$^{*}$-topology, which is metrizable by the following metric:
\begin{equation}\label{metr}
D(\mu,\nu):=\sum_{k=1}^\infty 
\frac{|\int\phi_k\dif\mu-\int \phi_k\dif\nu|}{2^k},
\end{equation}
where $\{ \phi_k \}$ is a  countable family of  dense subsets of $C(X)$  taking values in $[0,1]$. Thus, the diameter of $M(X)$ in the sense of the metric $D$ is  always less than $1$, and  for fixed $\mu,\nu \in M(X)$,  both $D(\mu,\cdot)$ and $D(\cdot,\nu)$  are convex functions on  $M(X)$. Furthermore, it is well-known that $M(X)$ and $M_f(X)$ are compact  convex sets in  the weak$^{*}$-topology.

Given  $\mu\in M_{f}(X)$ and 
a  finite Borel partition $\xi=\left\{C_{1},\cdots,C_{k}\right\}$, the partition entropy of $\xi$  w.r.t. $\mu$ is defined by $H_{\mu}(\xi)=-\sum_{i=1}^{k}\mu(C_{i})\log (\mu(C_{i})).$
We define the  \emph{measure-theoretic entropy} of $\mu$ w.r.t. $\xi$ as
\begin{equation*}
h_{\mu}(f,\xi)=\lim_{n\to+\infty}\frac{1}{n}H_{\mu}(\xi^{n}),
\end{equation*}
Denote by  $\xi^n:=\vee_{i=0}^{n-1}f^{-i}\xi$ is  the $n$-th join of  the  preimage partitions $\xi, f^{-1}\xi,\ldots,f^{-(n-1)}\xi$ on $X$.
The \emph{measure-theoretic entropy of  $\mu$} is defined as $$h_{\mu}(f)=\sup_{\xi}h_{\mu}(f,\xi),$$
where the supremum ranges over  all finite Borel  partitions of $X$.

For topological dynamical systems, the measure-theoretic counterpart of metric mean dimension is the measure-theoretic metric mean dimension of invariant measures. It  measures the divergent rate of the limits of the measure-theoretic  $\epsilon$-entropies such as  Brin-Katok $\epsilon$-entropy, Katok $\epsilon$-entropy, Kolmogorov-Sinai $\epsilon$-entropy, and the others.  A systemic study about the measure-theoretic $\epsilon$-entropies  is stated in \cite{ycz24}. Here, we only recall two types of the measure-theoretic 
$\epsilon$-entropies.

Let $\mu \in M_f(X) $ and $\epsilon >0$. 
\begin{itemize}
\item [(a)]   The \emph{Kolmogorov-Sinai $\epsilon$-entropy} \cite{Gutman2021}  of $\mu $ is given by
\begin{equation*}
\inf_{{\rm diam}(\xi)<\epsilon}h_\mu(f,\xi),
\end{equation*}
where the infimum is taken over all finite Borel partitions of $X$ with diameter  less than $\epsilon$.
\item [(b)] For  $\delta\in(0,1)$ and $n\in\mathbb{N}$, we put
\begin{equation*}
N_\mu(n,\epsilon,\delta):=\min\{
\#E: E\subset X~\emph{and}~ \mu(\cup_{x\in E}B_n(x,\epsilon))>1-\delta
\}.
\end{equation*}
The \emph{Katok $\epsilon$-entropy}   of $\mu$  \cite{k80} is defined by 
\begin{equation*}
\underline{h}_\mu^K(f,\epsilon,\delta)=\liminf_{n\to\infty}\frac{1}{n}\log N_\mu(n,\epsilon,\delta).
\end{equation*}
\end{itemize}

  The  Katok $\epsilon$-entropy and Kolmogorov-Sinai $\epsilon$-entropy are related by the following inequalities:
\begin{lem}{\rm \cite[Lemma 4.1]{
Shi2022}}\label{ksmu}
Let $(X,d,f)$ be a TDS and $\mu\in M_f^e(X)$, If $\mathcal{U}$ is a finite open cover of $X$ with the diameter ${\rm diam}(\mathcal{U})\leq 4\epsilon$ and the Lebesgue number ${\rm Leb}(\mathcal{U})\geq\epsilon$, then for any $\delta\in(0,1)$, 
\begin{equation*}
\underline{h}_\mu^K(f,4\epsilon,\delta)
\leq\inf_{\xi\succ\mathcal{U}}h_\mu(f,\xi)
\leq 
\underline{h}_\mu^K(f,\epsilon,\delta),
\end{equation*}
where  the infimum $\xi$ ranges over all finite Borel partitions of $X$ refining than $\mathcal{U}$.

Consequently, for every $\epsilon>0$, one has $\underline{h}_\mu^K(f,16\epsilon,\delta)\leq \inf_{{\rm diam}(\xi)<4\epsilon}\limits h_\mu(f,\xi)\leq \underline{h}_\mu^K(f,\epsilon,\delta).$
\end{lem}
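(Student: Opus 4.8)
\emph{Proof strategy.} The plan is to prove the two inequalities
\[
\underline{h}_\mu^K(f,4\epsilon,\delta)\le \inf_{\xi\succ\mathcal{U}}h_\mu(f,\xi)\le \underline{h}_\mu^K(f,\epsilon,\delta)
\]
separately, the main tool being the Shannon--McMillan--Breiman theorem for the ergodic measure $\mu$, and then to derive the ``Consequently'' clause by inserting finite open covers with prescribed diameter and Lebesgue number.

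\emph{The left inequality.} Only ${\rm diam}(\mathcal{U})<4\epsilon$ is used here. Let $\xi$ be any finite Borel partition with $\xi\succ\mathcal{U}$; then every atom of $\xi$ lies in a member of $\mathcal{U}$, hence has diameter $<4\epsilon$, and so each atom $\xi^n(x)$ of $\xi^n:=\vee_{i=0}^{n-1}f^{-i}\xi$ has $d_n$-diameter $<4\epsilon$, giving $\xi^n(x)\subset B_n(x,4\epsilon)$. Put $h:=h_\mu(f,\xi)$ and fix $\gamma>0$. By the Shannon--McMillan--Breiman theorem, $-\frac1n\log\mu(\xi^n(x))\to h$ for $\mu$-a.e.\ $x$, so Egorov's theorem furnishes $n_0$ and a set $Y$ with $\mu(Y)>1-\delta$ and $\mu(\xi^n(x))\ge e^{-n(h+\gamma)}$ for all $n\ge n_0$, $x\in Y$. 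For such $n$ the atoms of $\xi^n$ meeting $Y$ are pairwise disjoint and each has measure $\ge e^{-n(h+\gamma)}$, so there are at most $e^{n(h+\gamma)}$ of them; choosing one point from each produces $E_n$ with $\#E_n\le e^{n(h+\gamma)}$ and $\mu\big(\bigcup_{x\in E_n}B_n(x,4\epsilon)\big)\ge\mu(Y)>1-\delta$. Hence $N_\mu(n,4\epsilon,\delta)\le e^{n(h+\gamma)}$ for $n\ge n_0$; letting $n\to\infty$, then $\gamma\to0$, and finally taking the infimum over $\xi\succ\mathcal{U}$ gives $\underline{h}_\mu^K(f,4\epsilon,\delta)\le\inf_{\xi\succ\mathcal{U}}h_\mu(f,\xi)$.

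\emph{The right inequality.} Here ${\rm Leb}(\mathcal{U})\ge\epsilon$ enters: every open $\epsilon$-ball lies in a member of $\mathcal{U}$, so every Bowen ball $B_n(x,\epsilon)$ lies in a single atom of the join cover $\mathcal{U}^{(n)}:=\vee_{i=0}^{n-1}f^{-i}\mathcal{U}$. Taking a minimal $(n,\epsilon)$-spanning set $E_n$ with $\mu\big(\bigcup_{x\in E_n}B_n(x,\epsilon)\big)>1-\delta$, this bulk is covered by at most $N_\mu(n,\epsilon,\delta)$ atoms of $\mathcal{U}^{(n)}$. Using the standard fact that $\inf_{\xi\succ\mathcal{U}}h_\mu(f,\xi)=\lim_n\frac1n H_\mu(\mathcal{U}^{(n)})$, where $H_\mu(\mathcal{V}):=\inf\{H_\mu(\beta):\beta\text{ a finite partition with }\beta\succ\mathcal{V}\}$, one disjointifies these $\le N_\mu(n,\epsilon,\delta)$ atoms together with atoms covering the remaining region of measure $<\delta$ into a partition $\beta_n\succ\mathcal{U}^{(n)}$; the elementary bound $H_\mu(\beta)\le\log(\#\{\text{atoms in the bulk}\})+(\text{contribution of the small part})+O(1)$ then yields $\frac1n H_\mu(\mathcal{U}^{(n)})\le\frac1n\log N_\mu(n,\epsilon,\delta)+(\text{an error controlled by the measure-}{<}\delta\text{ region})$, and passing to a subsequence realizing $\liminf_n$ gives $\inf_{\xi\succ\mathcal{U}}h_\mu(f,\xi)\le\underline{h}_\mu^K(f,\epsilon,\delta)$.

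\emph{The ``Consequently'' part, and the main obstacle.} For every $r>0$ there is a finite open cover with diameter $<4r$ and Lebesgue number $\ge r$ (e.g.\ the balls $B(x,\tfrac{3r}{2})$ over an $\tfrac r2$-dense finite set). Applying the two inequalities to such a cover with $r=\epsilon$, and using that $\xi\succ\mathcal{U}$ forces ${\rm diam}(\xi)<4\epsilon$, gives $\inf_{{\rm diam}(\xi)<4\epsilon}h_\mu(f,\xi)\le\inf_{\xi\succ\mathcal{U}}h_\mu(f,\xi)\le\underline{h}_\mu^K(f,\epsilon,\delta)$; applying the left inequality to a cover with $r=4\epsilon$, and using that ${\rm Leb}\ge4\epsilon$ forces every $\xi$ with ${\rm diam}(\xi)<4\epsilon$ to refine it, gives $\underline{h}_\mu^K(f,16\epsilon,\delta)\le\inf_{{\rm diam}(\xi)<4\epsilon}h_\mu(f,\xi)$. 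The delicate point is the right inequality: the SMB--covering scheme handles the bulk of measure $>1-\delta$ in one stroke, but bounding the entropy contributed by the complementary set of measure $<\delta$ — which a priori may be spread over exponentially many atoms of $\mathcal{U}^{(n)}$ — requires the careful bookkeeping carried out in \cite{Shi2022}; all remaining steps are routine applications of Shannon--McMillan--Breiman together with atom counting.
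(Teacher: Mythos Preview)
The paper does not prove this lemma at all: it is stated with a citation to \cite[Lemma 4.1]{Shi2022} and used as a black box. So there is no ``paper's own proof'' to compare against; your proposal already goes well beyond what the paper supplies.

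On the substance of your argument: the left inequality and the ``Consequently'' clause are handled correctly. The Shannon--McMillan--Breiman/Egorov scheme for the left inequality is the standard one, and your derivation of the final chain of inequalities by inserting covers with controlled diameter and Lebesgue number is clean.

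The right inequality, however, has a genuine gap that you yourself flag but do not close. With your bookkeeping one obtains
\[
\frac{1}{n}H_\mu(\mathcal{U}^{(n)})\;\le\;\frac{1}{n}\log N_\mu(n,\epsilon,\delta)\;+\;\delta\log(\#\mathcal{U})\;+\;o(1),
\]
because the remainder of measure $<\delta$ may require up to $(\#\mathcal{U})^n$ atoms to refine $\mathcal{U}^{(n)}$, and the grouping bound on partition entropy then contributes $\delta\cdot n\log(\#\mathcal{U})$. After dividing by $n$ this leaves a residual $\delta\log(\#\mathcal{U})$ which does \emph{not} vanish, and you cannot send $\delta\to 0$ since $\underline{h}_\mu^K(f,\epsilon,\delta)$ increases as $\delta$ decreases. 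So the route through $H_\mu(\mathcal{U}^{(n)})$ alone does not yield the stated inequality for every fixed $\delta\in(0,1)$; one needs an additional argument (as in \cite{Shi2022}) exploiting ergodicity---for instance, showing that for ergodic $\mu$ the quantity $\underline{h}_\mu^K(f,\epsilon,\delta)$ is in fact independent of $\delta$, or handling the small-measure complement via a Brin--Katok/SMB estimate rather than a crude atom count. As written, your sketch is honest about this but incomplete.
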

We state the variational principles  between  the metric mean dimension and the above two types of measure-theoretic $\epsilon$-entropies.

\begin{thm}\label{thm 2.4}{\rm (cf. \cite[Theorems 1.2 and 1.3]{ycz24})}\label{mlk}
Let $(X,d,f)$ be a TDS. Then for any $\delta\in(0,1)$,
\begin{align*}
\overline{\rm mdim}_M(f,X,d)&=\limsup_{\epsilon\to0}\frac{1 }{|\log\epsilon|}\sup_{\mu\in M_f(X)}F(\mu,\epsilon),\\
&=\limsup_{\epsilon\to0}\frac{1 }{|\log\epsilon|}\sup_{\mu\in M_f^e(X)}F(\mu,\epsilon),
\end{align*}
where $F(\mu,\epsilon)\in \{\inf_{{\rm diam}(\xi)<\epsilon}\limits h_\mu(f,\xi), \underline{h}^K_\mu(f,\epsilon,\delta)\}$.

The results are also valid for $\underline{\rm mdim}_M(f,X,d)$ by changing $\limsup_{\epsilon\to 0}$ into $\liminf_{\epsilon\to 0}$.
\end{thm}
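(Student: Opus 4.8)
\noindent\emph{Proof proposal.} The plan is to read the statement as a \emph{fixed-scale variational principle}. Since $\overline{\rm mdim}_M(f,X,d)=\limsup_{\epsilon\to0}\frac{h_{top}^{UC}(f,X,\epsilon)}{|\log\epsilon|}$ by definition, and since $|\log(c\epsilon)|/|\log\epsilon|\to1$ as $\epsilon\to0$ for every fixed $c>0$, a multiplicative change of the scale $\epsilon$ inside $\limsup_{\epsilon\to0}\frac{1}{|\log\epsilon|}(\,\cdot\,)$ costs nothing; this ``harmless rescaling'' is the mechanism I would use throughout. It therefore suffices to relate $h_{top}^{UC}(f,X,\epsilon)$ to $\sup_\mu F(\mu,\epsilon)$ up to a bounded change of scale, for each of the two choices of $F$ and for the supremum taken over either $M_f(X)$ or $M_f^e(X)$.

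\textbf{Step 1 (the Kolmogorov--Sinai versions).} Here I would simply invoke \cite[Theorem 3.1]{Gutman2021} for
$$\overline{\rm mdim}_M(f,X,d)=\limsup_{\epsilon\to0}\frac{1}{|\log\epsilon|}\sup_{\mu\in M_f(X)}\inf_{{\rm diam}(\xi)<\epsilon}h_\mu(f,\xi),$$
together with \cite[Remark 3.6]{Gutman2021} for the fact that $M_f(X)$ may here be replaced by $M_f^e(X)$. For orientation, the ``$\le$'' half is a scale-$\epsilon$ Goodwyn-type inequality (a partition with ${\rm diam}(\xi)<\epsilon$ is subordinate to the open cover of $X$ by the $\epsilon$-balls around its atoms, whence $h_\mu(f,\xi)\le h_{top}^{UC}(f,X,c\epsilon)$ for a universal $c$), while the ``$\ge$'' half is the Misiurewicz construction run at scale $\epsilon$: from a maximal $(n,\epsilon)$-separated set $E_n$ one forms $\mu_n=\frac1n\sum_{k=0}^{n-1}f^{k}_{*}\big(\frac{1}{\#E_n}\sum_{x\in E_n}\delta_x\big)$, extracts a weak$^*$ limit $\mu\in M_f(X)$, and bounds its Kolmogorov--Sinai $\epsilon'$-entropy from below via a partition with small boundary adapted to $E_n$.

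\textbf{Step 2 (deducing the Katok versions).} Next I would feed the output of Step 1 into Lemma \ref{ksmu}. For $\mu\in M_f^e(X)$ it gives, after replacing $\epsilon$ by $\epsilon/4$,
$$\underline{h}_\mu^K(f,4\epsilon,\delta)\ \le\ \inf_{{\rm diam}(\xi)<\epsilon}h_\mu(f,\xi)\ \le\ \underline{h}_\mu^K(f,\textstyle\frac{\epsilon}{4},\delta);$$
taking $\sup_{\mu\in M_f^e(X)}$, dividing by $|\log\epsilon|$, letting $\epsilon\to0$ and applying the rescaling remark, the two outer quantities acquire the same $\limsup$ as the middle one, which by Step 1 equals $\overline{\rm mdim}_M(f,X,d)$; this settles the Katok version over $M_f^e(X)$, and for every $\delta\in(0,1)$. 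For the Katok version over $M_f(X)$ I would use the elementary bound $N_\mu(n,\epsilon,\delta)\le s_n(X,\epsilon/2)$ valid for all $\mu\in M_f(X)$ (a maximal $(n,\epsilon/2)$-separated subset of $X$ covers $X$ by $d_n$-balls of radius $\epsilon$), so that $\underline{h}_\mu^K(f,\epsilon,\delta)\le h_{top}^{UC}(f,X,\epsilon/2)$ and hence
\begin{align*}
\overline{\rm mdim}_M(f,X,d)&=\limsup_{\epsilon\to0}\frac{1}{|\log\epsilon|}\sup_{\mu\in M_f^e(X)}\underline{h}_\mu^K(f,\epsilon,\delta)\\
&\le\limsup_{\epsilon\to0}\frac{1}{|\log\epsilon|}\sup_{\mu\in M_f(X)}\underline{h}_\mu^K(f,\epsilon,\delta)\\
&\le\limsup_{\epsilon\to0}\frac{1}{|\log\epsilon|}h_{top}^{UC}(f,X,\epsilon/2)\ =\ \overline{\rm mdim}_M(f,X,d),
\end{align*}
so every inequality in this chain is an equality. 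The same squeeze also reconciles the $M_f(X)$ and $M_f^e(X)$ Kolmogorov--Sinai formulas with one another and matches \cite[Proposition 7.3]{Shi2022}.

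\textbf{Expected main obstacle.} The only genuinely non-formal ingredient is the ``$\ge$'' direction of Step 1: producing a single invariant (or ergodic) measure whose Kolmogorov--Sinai $\epsilon'$-entropy---an \emph{infimum} over all partitions of small diameter, not over one convenient partition---is at least $h_{top}^{UC}(f,X,\epsilon)$ up to a bounded rescaling of the scale. The naive Misiurewicz measure only controls the partition used to build it, and upgrading this to the infimum is precisely what is carried out in \cite[Theorem 3.1]{Gutman2021} (and, in the Katok formulation, in \cite[Proposition 7.3]{Shi2022}). Everything else in the plan is the rescaling remark, the quoted ergodic-decomposition statement, Lemma \ref{ksmu}, and the trivial spanning/separated comparison.
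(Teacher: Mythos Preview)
Your outline is sound, but note that the paper itself does not prove Theorem~\ref{mlk}: it is stated as a quotation from the literature (the ``cf.'' to \cite[Theorem~3.1, Remark~3.6]{Gutman2021} and \cite[Proposition~7.3]{Shi2022}), with no proof given in the text. There is therefore nothing to compare your argument against in this paper. What you have written is essentially a reader's guide to those cited results---invoking \cite{Gutman2021} directly for the Kolmogorov--Sinai formulation and then sandwiching via Lemma~\ref{ksmu} to obtain the Katok formulation---and that is exactly the logical route the citations encode.
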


\subsection{Specification property of TDSs}
In general, it is difficult to find a real orbit with the desired  dynamical behavior, while one can get an approximate orbit of a point that $\epsilon$-shadows any  specified finite orbit-segments, which is known as  the  \emph{specification property} in TDSs.

\begin{defn}
One says that a TDS $(X,d,f)$ has the specification property if for any $\epsilon>0$, there is a positive integer $m=m(\epsilon)$ such that for any  finite collection of  intervals $I_{j}=[a_{j},b_{j}]\subset\mathbb{N}$ with $a_{j+1}-b_{j}\ge m(\epsilon)$, $j=1,\dots,k-1$,  and any collection  of $k$ points $\{x_{1},\dots,x_{k}\}$ of $X$, there exists  a point  $x\in X$ such that
\begin{equation*}
d(f^{p}x_{j}, f^{p+a_{j}}x)<\epsilon
\end{equation*}
for any $p=0,\cdots,b_{j}-a_{j}$, $j=1,\cdots,k$. 
\end{defn}

 In other words,  the point $x$ $\epsilon$-shadows the  $k$ pieces of finite orbits  of $x_j$:
 \begin{equation*}
 \{ x_j, f(x_j),\ldots,f^{b_j-a_j }(x_j) \},\ j=1,\ldots,k.
 \end{equation*} 
For instance,  every topological mixing locally maximal hyperbolic set, and the full shift  on $[0,1]^{\mathbb{Z}}$  have the specification property.  More examples of infinite entropy systems with  the specification property  can be found in \cite[Section IV]{Backes2023}.

A point $x\in X$ is called \emph{almost periodic} if for any open neighborhood $U$ of $x$, there exists $N\in\mathbb{N}$ such that for every $n\in\mathbb{N}$, there exists an integer $k$ with $n\le k\le n+N$ for which  $f^{k}(x)\in U$.
Denote by $AP(X)$  the set of all almost periodic points of $X$. 

\begin{prop}{\rm \cite[Proposition 2.11]{Hou2023}}\label{prop3}{\rm }
If a TDS $(X,d,f)$ has the specification property, then the almost periodic set AP(X) is dense in $X$.
\end{prop}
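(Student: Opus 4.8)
The plan is to prove Proposition \ref{prop3} directly from the specification property, constructing, for each nonempty open set $U \subset X$, an almost periodic point lying in $U$. Since the sets $\{x : f^k(x) \in U \text{ for some } n \le k \le n+N\}$ and the definition of almost periodicity involve one open neighborhood at a time, the strategy is to exhibit a single point $x \in U$ whose orbit returns to $U$ with bounded gaps; a standard argument then upgrades this to full almost periodicity by a diagonal/nesting construction over a countable basis.

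First I would fix a nonempty open set $U$ and pick $x_0 \in U$ together with $r > 0$ so that the ball $B(x_0, r) \subset U$. Let $\epsilon = r/2$ and let $m = m(\epsilon)$ be the specification constant. The key construction is to use specification to glue together infinitely many copies of a fixed short orbit segment (e.g. the single point $x_0$, i.e.\ intervals $I_j$ of length $0$) with spacing exactly $m$ between consecutive blocks: for each $N$, specification produces a point $y_N$ that $\epsilon$-shadows $x_0$ at the prescribed times $0, m+1, 2(m+1), \dots, N(m+1)$. Taking a limit point $x$ of $(y_N)$ (by compactness of $X$), one checks that $x$ still $\epsilon$-shadows $x_0$ at all times of the form $j(m+1)$, $j \ge 0$; more carefully one should set this up so the shadowing times form a syndetic set, which forces $f^{k}(x) \in B(x_0, 2\epsilon) = B(x_0,r) \subset U$ for a syndetic set of $k$, giving $x \in \overline{U}$ returning to $U$ with gap bounded by $m+1$. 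A small refinement (shadowing $x_0$ on the first block over an interval of positive length, or choosing the limit point inside $U$) ensures $x \in U$ itself, not merely in $\overline{U}$.

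To obtain a genuinely almost periodic point in $U$ — one that is almost periodic with respect to \emph{every} one of its neighborhoods, not just $U$ — I would iterate this idea along a shrinking sequence of closed neighborhoods. Concretely, build a nested sequence of nonempty open sets $U = U_0 \supset \overline{U_1} \supset U_1 \supset \overline{U_2} \supset \cdots$ together with integers $N_k$, where at stage $k$ one applies the above syndetic-return construction inside $U_k$ to produce a sub-neighborhood $U_{k+1}$ and a return-gap bound $N_{k+1}$ for returns to $U_k$; one must be a little careful to keep the returns to all earlier $U_i$ ($i \le k$) syndetic simultaneously, which is arranged because specification lets us prescribe shadowing at a whole prearranged schedule of times. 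The point $x \in \bigcap_k \overline{U_k}$ then has every neighborhood containing some $U_k$, hence the orbit of $x$ returns to that neighborhood with gaps bounded by $N_k$: this is exactly almost periodicity. Since $U$ was arbitrary and the argument places an almost periodic point in every nonempty open set, $AP(X)$ is dense.

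The main obstacle I anticipate is the bookkeeping in the limit/iteration step: one needs the set of shadowing times to be genuinely syndetic (bounded gaps), not merely infinite, and when passing to limit points of the $y_N$ the earlier shadowing constraints must survive. The cleanest way around this is to avoid limits of finite pieces and instead invoke compactness only once: fix in advance, for the construction inside $U_k$, an infinite arithmetic-progression-like schedule of times with common difference $m(\epsilon_k)+1$, apply specification on longer and longer initial segments, and extract a convergent subsequence whose limit respects the full schedule (the shadowing inequalities at each fixed time are closed conditions, so they pass to the limit). One should also double-check the elementary inequality ${\rm diam}$ bookkeeping — that $\epsilon$-shadowing of $x_0$ puts the orbit inside $B(x_0, 2\epsilon)$, hence inside $U$ — but that is routine. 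I would note in passing that this recovers the known statement from \cite[Proposition 2.11]{Hou2023} proved there under the weaker $g$-almost product property, so an alternative (shorter) route is simply to observe that specification implies $g$-almost product property and cite that result; the direct argument above is included for completeness.
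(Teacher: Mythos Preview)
The paper does not prove this proposition at all; it simply records that the result was established in \cite[Proposition 2.11]{Hou2023} under the weaker hypothesis of $g$-almost product property. So your closing remark --- observe that specification implies $g$-almost product property and cite that reference --- is precisely the paper's ``proof''.

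Your direct argument, however, has a genuine gap in the iteration step. The first step is fine: specification plus compactness produces a nonempty compact set
\[
A \;=\; \bigcap_{j\ge 0} f^{-j(m+1)}\bigl(\overline{B(x_0,\epsilon)}\bigr)\;\subset\; U,
\]
and one checks $f^{m+1}(A)\subset A$. The trouble is the upgrade to genuine almost periodicity. At stage $k$ you produce a point $z_k\in U_k$ whose orbit returns to $U_k$ (hence to every earlier $U_i$) with gap at most $N_{k+1}$; but this bound depends on $k$ and tends to infinity. When you pass to the limit point $x\in\bigcap_k\overline{U_k}$, nothing guarantees that $x$ inherits a \emph{finite} return gap to any fixed $U_i$: the closed shadowing constraints that survive limits are tied to a \emph{fixed} period, whereas here the period $P_k=m(\epsilon_k)+1$ is changing with $k$. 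You also cannot simply demand $\overline{U_{k+1}}\subset A_k$, since the sets $A_k$ typically have empty interior. The phrase ``specification lets us prescribe shadowing at a whole prearranged schedule of times'' does not rescue this, because the schedules for different $k$ have different periods $P_k$ and consecutive prescribed times may fall closer together than the specification gap allows.

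The clean fix is to drop the iteration entirely. From your first step you already have a nonempty compact set $A\subset U$ that is forward-invariant under $g=f^{m+1}$. By Zorn's lemma $A$ contains a $g$-minimal subset $M$; every point of $M$ is almost periodic for $g$, hence also for $f$ (syndetic for $g$ with gap $N$ means syndetic for $f$ with gap $N(m+1)$), and $M\subset A\subset U$. This gives $AP(X)\cap U\neq\emptyset$ for every nonempty open $U$, which is the desired density.
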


\section{Proofs of main results}\label{sec 3}

In this section, we  prove the main results of this paper.

\subsection{Packing metric mean dimension of saturated sets}
To  prove Theorem \ref{thm 1.1}, we divide its proof  into  two parts.

\subsubsection{Upper bound of $\overline{\rm mdim}^P_M(f,G_K,d)$}

\begin{lem}\label{lem4.2}{\rm \cite[Lemma 3.1]{Yuan2024}}
Let $(X,d,f)$ be a TDS and $\epsilon >0$. Let $\{F_n\}$ be a sequence of non-empty  subsets of $X$, and  let $E_{n}$ be an  $(n,\epsilon)$-separated subset of $F_n$ with  the largest cardinality. We  define  
\begin{equation*}
v_{n}:=\frac{1}{\#E_{n}}\sum_{x\in E_{n}}\mathcal{E}_{n}(x).
\end{equation*}
Then for any limit point $\mu$ of the sequence  $\{\nu_n\}$, one has
\begin{equation*}
\limsup_{n\to\infty}\frac{1}{n}\log\#E_{n}\le\inf_{\mathrm{diam}(\xi)<\epsilon}h_{\mu}(f,\xi).
\end{equation*}
\end{lem}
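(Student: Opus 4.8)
The plan is to reduce to a single partition and run a Misiurewicz-type subadditivity argument. For now fix a finite Borel partition $\xi=\{C_1,\dots,C_k\}$ of $X$ with $\mathrm{diam}(\xi)<\epsilon$ \emph{and} with $\mu(\partial\xi)=0$; passing to a subsequence that realizes the $\limsup$ we may also assume $v_n\to\mu$ in the weak$^{*}$-topology. The key separation observation is that if $x,y$ lie in a common atom of $\xi^{n}=\bigvee_{i=0}^{n-1}f^{-i}\xi$, then $f^{i}x$ and $f^{i}y$ lie in a common $C_j$ for every $0\le i<n$, so $d_n(x,y)\le\mathrm{diam}(\xi)<\epsilon$; since $E_n$ is $(n,\epsilon)$-separated, each atom of $\xi^{n}$ meets $E_n$ in at most one point, and therefore the uniform measure $\lambda_n:=\frac1{\#E_n}\sum_{x\in E_n}\delta_x$ satisfies $H_{\lambda_n}(\xi^{n})=\log\#E_n$. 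Note also $v_n=\frac1n\sum_{i=0}^{n-1}(f^{i})_{\ast}\lambda_n$.

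The second step is the standard block decomposition. For fixed $q$ and each residue $s\in\{0,\dots,q-1\}$, $\xi^{n}$ refines $\big(\bigvee_{j}f^{-(s+jq)}\xi^{q}\big)\vee\big(\bigvee_{i\in R(s)}f^{-i}\xi\big)$ with $\#R(s)<2q$; subadditivity of static entropy, $f$-invariance of the pushed-forward measures, averaging over $s$, and concavity of $\nu\mapsto H_{\nu}(\xi^{q})$ combine to give
\begin{equation*}
\tfrac1n\log\#E_n=\tfrac1nH_{\lambda_n}(\xi^{n})\le\tfrac1qH_{v_n}(\xi^{q})+\tfrac{2q\log k}{n}.
\end{equation*}
Because $f$ is a homeomorphism, $\partial(\xi^{q})\subset\bigcup_{i=0}^{q-1}f^{-i}(\partial\xi)$, so $\mu(\partial\xi^{q})\le q\,\mu(\partial\xi)=0$; hence by the Portmanteau theorem $v_n(A)\to\mu(A)$ for every atom $A$ of $\xi^{q}$, and $H_{v_n}(\xi^{q})\to H_{\mu}(\xi^{q})$ (a finite sum of continuous functions of the atom masses). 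Letting $n\to\infty$ in the displayed inequality thus yields $\limsup_n\tfrac1n\log\#E_n\le\tfrac1qH_\mu(\xi^{q})$ for every $q$, and letting $q\to\infty$ (with $h_\mu(f,\xi)=\inf_q\tfrac1qH_\mu(\xi^{q})$) gives $\limsup_n\tfrac1n\log\#E_n\le h_\mu(f,\xi)$.

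It remains to remove the auxiliary hypothesis $\mu(\partial\xi)=0$, and this is where I expect the real work to be: I would show that partitions with $\mu$-null boundary are cofinal for $\inf_{\mathrm{diam}(\xi)<\epsilon}h_\mu(f,\xi)$. Given an arbitrary $\eta=\{A_1,\dots,A_k\}$ with $\mathrm{diam}(\eta)<\epsilon$, pick by inner regularity compact $K_i\subset A_i$ with $\mu(A_i\setminus K_i)$ small, then replace each $K_i$ by its open $r$-neighbourhood, where $r$ is taken below $\tfrac12\min_{i\ne j}\mathrm{dist}(K_i,K_j)$ and below $\tfrac13\big(\epsilon-\max_i\mathrm{diam}A_i\big)$ and outside the countably many values with $\mu(\{\mathrm{dist}(\cdot,K_i)=r\})>0$. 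These neighbourhoods are then pairwise disjoint, have diameter $<\epsilon$, and have $\mu$-null boundary; and since they decrease to the $K_i$ as $r\downarrow0$, their $\mu$-leakage into the other atoms vanishes, so their symmetric differences with the $A_i$ have small $\mu$-measure. Cutting the leftover small-measure set by a fixed fine $\mu$-continuity partition (these exist by covering $X$ with finitely many balls of generic radius) produces a $\mu$-continuity partition $\xi$ with $\mathrm{diam}(\xi)<\epsilon$ and $H_\mu(\xi\mid\eta)$ arbitrarily small, whence $h_\mu(f,\xi)\le h_\mu(f,\eta)+H_\mu(\xi\mid\eta)$ is as close to $h_\mu(f,\eta)$ as we wish. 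Combined with the first two steps, $\limsup_n\tfrac1n\log\#E_n\le h_\mu(f,\eta)$ for every admissible $\eta$; taking the infimum over $\eta$ completes the proof. The block-decomposition bookkeeping and the entropy inequalities are routine; the delicate point throughout is the failure of weak$^{*}$-continuity of $\nu\mapsto H_\nu(\cdot)$ and its repair via $\mu$-continuity partitions.
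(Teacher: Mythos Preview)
Your argument is correct and is precisely the route the paper has in mind: its proof consists solely of the reference ``this essentially follows the proof of the upper bound of the classical variational principle (cf.~\cite[Theorem 9.10]{w82}),'' and what you have written is exactly Misiurewicz's block-decomposition argument from that proof, together with the extra approximation step (your third paragraph) needed to pass from $h_\mu(f,\xi)$ for a single $\mu$-continuity partition to $\inf_{\mathrm{diam}(\xi)<\epsilon}h_\mu(f,\xi)$. One small point of phrasing: your subsequence reduction should read ``pass to a subsequence along which $v_n\to\mu$ and then bound the $\limsup$ along that subsequence,'' which is all the paper actually uses (see the application in Lemma~\ref{upper}); as literally stated the lemma asserts the bound on the full $\limsup$ for \emph{every} limit point, which your reduction does not quite give, but this is an imprecision in the statement rather than in your argument.
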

\begin{proof}
It essentially follows from the proof of the upper bound for the  classical variational principle (cf.\cite[Theorem 8.6]{w82}).
\end{proof}

\begin{lem}\label{upper}
Let $(X, d, f)$ be a TDS, and let $K\subset M_{f}(X)$ be a non-empty closed convex subset. Then for any $\epsilon>0$, 
\begin{equation*}
h_{top}^{P}(f,G_{K},\epsilon)\leq h_{top}^{P}(f,G^{K},\epsilon)\le\sup_{\mu\in K}\inf_{\mathrm{diam}(\xi)<\epsilon}h_{\mu}(f,\xi),
\end{equation*}
where $G^K:=\{ x\in X: V_f(x)\subset K\}.$

Consequently, for every $\mu\in M_{f}(X)$ and  $\epsilon>0$,
\begin{equation*}
h_{top}^{P}(f,G_{\mu},\epsilon)\le\inf_{\mathrm{diam}(\xi)<\epsilon}h_{\mu}(f,\xi).
\end{equation*}
\end{lem}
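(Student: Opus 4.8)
The statement to be proved is Lemma \ref{upper}, which gives an upper bound for the $\epsilon$-packing topological entropy of the saturated set $G_K$ (respectively of $G^K$ and of $G_\mu$) in terms of the Kolmogorov--Sinai $\epsilon$-entropies of measures in $K$. Since $G_\mu = G_{\{\mu\}}$ and a singleton is trivially closed and convex, part $(2)$ is the special case $K=\{\mu\}$ of part $(1)$, so the plan is to prove $(1)$ and then read off $(2)$. Also, since $G_K\subset G^K$, monotonicity of $h_{top}^P$ (Proposition \ref{prop 2.1}(1), or directly from the definition of $\mathcal{P}$) reduces everything to bounding $h_{top}^P(f,G^K,\epsilon)$ from above by $\sup_{\mu\in K}\inf_{\mathrm{diam}(\xi)<\epsilon}h_\mu(f,\xi)$. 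So fix $\epsilon>0$ and set $s := \sup_{\mu\in K}\inf_{\mathrm{diam}(\xi)<\epsilon}h_\mu(f,\xi)$; the goal is to show $h_{top}^P(f,G^K,\epsilon)\le s$, i.e. $\mathcal{P}(G^K,t,\epsilon)=0$ for every $t>s$.

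\textbf{Main steps.} First I would fix $t>s$ and choose $\lambda>0$ with $t > s+2\lambda$. The key idea is a stopping-time decomposition of $G^K$: for $x\in G^K$ and $N\in\mathbb{N}$, let $B_N(x)$ be the set of points $y\in G^K$ for which there exists some $n\ge N$ with $\frac1n\log s_n(\overline{B}_n(y,\epsilon)\cap \text{(relevant set)})$ large --- more precisely, one wants to exploit that for $x\in G^K$ every weak$^*$-limit of $\mathcal{E}_n(x)$ lies in $K$, hence eventually $\mathcal{E}_n(x)$ is close to $K$, and by Lemma \ref{lem4.2} (applied to a maximal $(n,\epsilon)$-separated set $E_n$ in a small ball around an appropriate empirical measure) the exponential growth rate of such separated sets is controlled by $\sup_{\mu\in K'}\inf_{\mathrm{diam}(\xi)<\epsilon}h_\mu(f,\xi)$ for $K'$ a small neighborhood of $K$. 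Using upper semicontinuity-type estimates and convexity of $K$ (which guarantees the limit of empirical measures, being an average, stays controlled), one shows: there is $N_0$ such that for every $N\ge N_0$, $G^K$ can be covered by finitely many families of Bowen balls $\{\overline B_{n_i}(x_i,\epsilon)\}$ with $n_i\ge N$ and with $\sum_i \exp(-(s+\lambda)n_i)$ bounded. Concretely, I would: (i) for each $x\in G^K$ pick $n(x)\ge N$ such that $\mathcal{E}_{n(x)}(x)$ lies in a fixed small neighborhood $V$ of $K$ on which $\sup_{\mu\in V}\inf_{\mathrm{diam}(\xi)<\epsilon}h_\mu(f,\xi) < s+\lambda$ (this uses the limit-point property of $V_f(x)\subset K$ plus compactness); (ii) group points by the value $n(x)=n$ and within each group take a maximal $(n,\epsilon)$-separated set, so that $\{\overline B_n(x_j,\epsilon)\}$ covers that group and, by Lemma \ref{lem4.2}, the cardinality of the separated set is $\le e^{(s+\lambda)n}$ for $n$ large; (iii) sum over $n\ge N$ to get $P(G^K, s+2\lambda, N,\epsilon)$ or the covering version bounded, and let $N\to\infty$.

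\textbf{Handling the packing structure.} The above naturally produces \emph{covers}, whereas packing entropy is defined via \emph{pairwise disjoint} families and the extra infimum over countable decompositions in $\mathcal{P}$. The standard trick --- used in \cite{Zhou2012,Feng2012} --- is to note that $P(Z,t,\epsilon)$ for a set $Z$ admitting a suitable cover by Bowen balls is controlled: if $\{\overline B_{n_i}(x_i,\epsilon)\}_i$ is a disjoint family with centers in $Z$ and $Z$ is contained in a union of Bowen balls with good total weight, a Vitali/$5r$-type or direct counting argument bounds $\sum_i e^{-t n_i}$. Actually the cleanest route is to follow \cite[Theorem 1.1(2)]{Zhou2012} verbatim: decompose $G^K = \bigcup_{N\ge 1} G^K_N$ where $G^K_N := \{x\in G^K : n(x)\text{ can be chosen }\ge N \text{ with the neighborhood property}\}$ --- in fact $G^K_N = G^K$ for all $N$ by the limit-point property --- and on each piece show $P(G^K,t,N,\epsilon)\to 0$ directly, because any $(n,\epsilon)$-disjoint collection of closed Bowen balls centered in $G^K$ with all $n\ge N$ can have its balls sorted by radius $n$ and counted against the separated-set bound, giving $\sum_i e^{-tn_i} \le \sum_{n\ge N} e^{(s+\lambda)n} e^{-tn} = \sum_{n\ge N} e^{-(t-s-\lambda)n} \to 0$ as $N\to\infty$ since $t-s-\lambda>0$. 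Hence $\mathcal P(G^K,t,\epsilon)=0$, so $h_{top}^P(f,G^K,\epsilon)\le t$; letting $t\downarrow s$ finishes $(1)$, and $(2)$ follows by taking $K=\{\mu\}$.

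\textbf{Expected main obstacle.} The delicate point is not the packing bookkeeping (that is routine, following \cite{Zhou2012,Feng2012}) but rather step (i): making precise that for $x\in G^K$ one can choose arbitrarily large $n$ with $\mathcal E_n(x)$ in a prescribed neighborhood $V$ of $K$ \emph{and} that $\sup_{\nu\in V}\inf_{\mathrm{diam}(\xi)<\epsilon}h_\nu(f,\xi)$ can be made $<s+\lambda$ by shrinking $V$. The second half is an upper-semicontinuity statement for $\nu\mapsto\inf_{\mathrm{diam}(\xi)<\epsilon}h_\nu(f,\xi)$ on the compact set $K$ --- this functional is an infimum of the affine-upper-semicontinuous maps $\nu\mapsto h_\nu(f,\xi)$, hence upper semicontinuous, so it attains its sup on $K$ and extends to a slightly larger neighborhood with sup $<s+\lambda$; convexity of $K$ is what lets us invoke Lemma \ref{lem4.2} cleanly, since the barycenter $\nu_n$ of the empirical measures stays in (a neighborhood of) $K$. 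I expect roughly one paragraph of the written proof to be devoted to pinning down this semicontinuity-plus-neighborhood argument, with the rest being the separated-set counting via Lemma \ref{lem4.2} and the summation over the stopping times.
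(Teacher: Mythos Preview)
Your overall architecture is sound: reducing $(2)$ to $(1)$ via $K=\{\mu\}$, using $G_K\subset G^K$ for the first inequality, and recognizing that Lemma \ref{lem4.2} is the engine for counting separated sets. However, the step you flag as the ``expected main obstacle'' is a genuine gap, and your proposed fix does not work.

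You claim that $\nu\mapsto h_\nu(f,\xi)$ is upper semicontinuous, and hence so is the infimum $\nu\mapsto\inf_{\mathrm{diam}(\xi)<\epsilon}h_\nu(f,\xi)$. The map $\nu\mapsto h_\nu(f,\xi)$ is indeed affine, but there is no reason for it to be upper semicontinuous for a general measurable partition $\xi$: the quantities $\nu(A)$ for $A\in\xi^n$ need not behave well under weak$^*$ convergence unless the boundaries $\partial A$ have zero measure, which you cannot arrange uniformly over all $\nu$ in a neighborhood of $K$. Without this semicontinuity, you cannot choose a neighborhood $V\supset K$ with $\sup_{\nu\in V}\inf_{\mathrm{diam}(\xi)<\epsilon}h_\nu(f,\xi)<s+\lambda$, and then your per-level bound $\#(\text{separated set at level }n)\le e^{(s+\lambda)n}$---which you need for \emph{every} large $n$ to run the geometric sum $\sum_{n\ge N}e^{-(t-s-\lambda)n}$---is unjustified: Lemma \ref{lem4.2} gives only a $\limsup$ bound along a subsequence, with the limit measure lying in $\overline{V}$ rather than in $K$.

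The paper sidesteps this entirely with two moves. First, instead of manipulating packings directly, it invokes the general inequality $h_{top}^P(f,Z,\epsilon)\le h_{top}^{UC}(f,Z,\epsilon)$ from Proposition \ref{prop 2.1}(3) together with countable stability of $h_{top}^P$; since $G^K\subset\bigcup_k\mathcal{R}_K(\delta,k)$ for every $\delta>0$, this reduces the problem to bounding a single $\limsup$ of separated-set counts, namely $\Theta(K,\epsilon):=\lim_{\delta\to0}\limsup_{n\to\infty}\frac1n\log S(K,\delta,n,\epsilon)$. Second---and this is the key device replacing semicontinuity---it \emph{diagonalizes}: choose $\delta_k\downarrow0$ and $n_k\uparrow\infty$ simultaneously so that $S(K,\delta_k,n_k,\epsilon)>\exp[n_k(\Theta(K,\epsilon)-\gamma)]$, form the barycenters $\mu_k\in B(K,\delta_k)$, and pass to a weak$^*$ limit $\mu$. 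Because $\delta_k\to0$ and $K$ is closed and convex, the limit $\mu$ lands \emph{in} $K$, and Lemma \ref{lem4.2} applied along this single sequence yields $\Theta(K,\epsilon)-\gamma\le\inf_{\mathrm{diam}(\xi)<\epsilon}h_\mu(f,\xi)\le s$. No semicontinuity of the entropy functional is ever invoked.
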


\begin{proof}
It suffices to show the inequality 
$$h_{top}^{P}(f,G^{K},\epsilon)\le\sup_{\mu\in K}\inf_{\mathrm{diam}(\xi)<\epsilon}h_{\mu}(f,\xi).$$

For $n\in\mathbb{N}$  and  $\delta>0$, we put 
\begin{align*}
\mathcal{R}_K(\delta,k):&=\left\{
x\in X:\ \mathcal{E}_n(x)\in B(K,\delta),~ \forall  n \geq k
\right\},\\
\mathcal{Q}_K(\delta,n):&=\left\{
x\in X:\ \mathcal{E}_n(x)\in B(K,\delta)
\right\},
\end{align*}
and  define $$\Theta(K,\epsilon):=\lim_{\delta\to0}\limsup_{n\to\infty}\frac{1}{n}\log s_n(\mathcal{Q}_K(\delta,n),\epsilon),$$ where  $s_n(\mathcal{Q}_K(\delta,n),\epsilon)$ is the largest cardinality of $(n,\epsilon)$-separated sets of  $\mathcal{Q}_K(\delta,n)$.

For every $x\in G^K$, we have $V_f(x)\subset B(K,\delta)$ for every $\delta >0$. Then  the cardinality  of $n$ such that  $ \mathcal{E}_{n}(x)\in M_f(X)\backslash B(K,\delta)$   must be finite.   This shows that  $x\in \mathcal{R}_K(\delta,k)$ for some $k$.
By  $G^K\subset\cup_{k=1}^\infty\mathcal{R}_K(\delta,k)$ for every $\delta>0$, it follows from Proposition \ref{prop 2.1} that
\begin{equation}\label{ppuc}
h_{top}^P(f,G^K,\epsilon)\leq \sup_kh_{top}^P(f,\mathcal{R}_K(\delta,k),\epsilon)\leq \sup_kh_{top}^{UC}(f,\mathcal{R}_K(\delta,k),\epsilon).
\end{equation}
For any $\tau>0$, there exists $\delta_0>0$ such that
\begin{equation*}
\limsup_{n\to\infty}\frac{1}{n}\log s_n(\mathcal{Q}_K(\delta_0,n),\epsilon)<\Theta(K,\epsilon)+\tau.
\end{equation*}
For any $k\geq 1$, it holds that $\mathcal{R}_K(\delta_0,k)\subset\mathcal{Q}_K(\delta_0,n)$  for any $n\geq k$.
Thus, we have
\begin{equation*}
h_{top}^{UC}(f,\mathcal{R}_K(\delta_0,k),\epsilon)\leq\limsup_{n \to \infty}\frac{1}{n}\log s_n(\mathcal{Q}_K(\delta_0,n),\epsilon)<\Theta(K,\epsilon)+\tau.
\end{equation*}
Using \eqref{ppuc} and  letting  $\tau \to 0$, we  conclude that $h_{top}^P(f,G^K,\epsilon)\leq \Theta(K,\epsilon)$. 

 Take a strictly decreasing  sequence $\{\delta_k\}$  with $\delta_k\to0$ and a strictly increasing sequence $\{n_k\}$ with $n_k\to\infty$ such that
\begin{equation*}
s_{n_k}(\mathcal{Q}_K(\delta_k,n_k),\epsilon) > \exp[n_k(\Theta(K,\epsilon)-\gamma) ].
\end{equation*}
Let $F_k$ be a $(n_k,\epsilon)$-separated subset of $\mathcal{R}_K(\delta_k,n_k)$ with the largest cardinality. We set
\begin{equation*}
\mu_k:=\frac{1}{\#F_k}\sum_{x\in F_k}\mathcal{E}_{n_k}(x).
\end{equation*}
Then  $\mu_k \in B(K,\delta_k) $  since $K$ is convex.
Now choose a subsequence $\{n_{k_j}\}$ such that $\mu_{k_j}\to\mu$ as $j\to\infty$. Then $\mu\in K$ since $K$ is  a closed. By Lemma \ref{lem4.2}, we have
\begin{equation*}
\Theta(K,\epsilon)-\gamma\leq  \limsup_{j\to\infty}\frac{1}{n_{k_j}}\log s_{n_{k_j}}(\mathcal{Q}_{k_j}(\delta_{k_j},n_{k_j}),\epsilon) \leq \inf_{ {\rm diam}(\xi)<\epsilon }h_\mu(f,\xi).
\end{equation*}
This  implies that $h_{top}^{P}(f,G^{K},\epsilon)\le\sup_{\mu\in K}\inf_{\mathrm{diam}(\xi)<\epsilon}h_{\mu}(f,\xi)$.
\end{proof}
Thus, by  Lemma \ref{upper} the upper bound of $\overline{\rm mdim}_{M}^{P}(f,G_{K},d)$ is bounded above by
\begin{equation}\label{3.1}
\overline{\rm mdim}_{M}^{P}(f,G_{K},d)
\le\limsup_{\epsilon\to0}\frac{1}{|\log\epsilon|}\sup_{\mu\in K}\inf_{\mathrm{diam}(\xi)<\epsilon}h_{\mu}(f,\xi).
\end{equation}

\subsubsection{Lower bound of $\overline{\rm mdim}^P_M(f,G_K^{\omega,U},d)$} \label{lower}
The challenging direction is  how to estimate  the lower bound of $\overline{\rm mdim}^P_M(f,G_K^{\omega,U},d)$. This shall be done by constructing a \emph{geometric Moran fractal set} that is contained  in $G_K^{\omega,U}$. 

For any sufficiently small $\epsilon^\ast>0$, we  prove that
\begin{equation}\label{3.2}
\sup_{\mu\in K}\inf_{{\rm diam}(\xi)<20\epsilon^\ast}h_{\mu}(f,\xi)\leq h_{top}^P(f,G_K^{\omega,U},\frac{\epsilon^\ast}{12}).
\end{equation}
Then the above inequality yields the desired lower bound.

Without loss of generality, one may assume $H:=\sup_{\mu\in K}\inf_{{\rm diam}(\xi)<20\epsilon^\ast}h_{\mu}(f,\xi)>0$. Let $\zeta >0$ and then take $\mu^\ast\in K$ such that
\begin{equation}\label{to47}
\inf_{\mathrm{diam}(\xi)<20\epsilon^\ast}h_{\mu^\ast}(f,\xi)>H-\zeta.
\end{equation}

Since $K$ is a  connected and compact set, we choose a sequence $\{\mu_{(k,i)}:k\in\mathbb{N},1\le i\le Q_{k}\}\subseteq K$ such that
\begin{align*}
K\subset \bigcup_{i=1}^{Q_{k}}B(\mu_{(k,i)},\frac{1}{k}),\ D(\mu_{(k,i)},\mu_{(k,i+1)})&\le\frac{1}{k},\ 1\leq i<Q_k\\
D(\mu_{(k,Q_{k})},\mu_{(k+1,1)})&\le\frac{1}{k},\ \mu_{(k,Q_{k})}=\mu^\ast.
\end{align*}

Now endow the set $\mathcal{A}=\{(0,0)\}\cup\{(k,j):k\geq 1, 0\leq j\leq Q_k \}$ with the lexicographic order. Denote by $\iota(k,j)$ the position of $(k,j)\in\mathcal{A}$. For $(k,j)\in\mathcal{A}$, we shall use $(k,j)^\ast$ and $(k,j)_\ast$ to represent the one before and the one after $(k,j)$, respectively.

Let $\{\delta_{(k,j)}\}$ be a strictly decreasing sequence indexed by $\mathcal{A}$ with $\delta_{(0,0)}<\zeta$, $\delta_{(k,j)}\to0$ as $(k,j)\to\infty$. We shall simply write $\delta_k=\delta_{(k,Q_k)}$. For  $(k,j)\in\mathcal{A}$, let $\epsilon_{(k,j)}:=\frac{\epsilon^\ast}{2^{\iota(k,j)+2}}$, and $m_{(k,j)}:=m(\epsilon_{(k,j)})$ be  chosen as in the definition of the specification property.

For any non-empty open set $U$, there exist $0<\tilde{\epsilon}<\epsilon^{*}$ and a point $a_{0}\in X$ such that ${\overline{B}(a_{0},\tilde{\epsilon})}\subseteq U$. Set 
\begin{equation*}
\mathcal{C}_{(0,0)}=\{ a_0 \},\ n_{(0,0)}=1,\ 
N_{(0,0)}=1.
\end{equation*}
For any $k\in\mathbb{N}$, there exists $\Delta_k=\{ a_1^k,\ldots,a_{q_k}^k \}\subset AP(X)$,  which is $\epsilon_{(k,0)}$-dense in $X$ by Proposition \ref{prop3}. By the specification property, there exists $a_k\in X$ such that
\begin{equation*}
d\left(
f^{ (i-1)m_{(k,0)}+i-1 }(a_k),a_i^k
\right)<\epsilon_{(k,0)},\ i=1,2,\ldots,q_k.
\end{equation*}
Write $\mathcal{C}_{(k,0)}=\{a_k \}$, $n_{(k,0)}=q_k+(q_k-1)m_{(k,0)}$ and  $N_{(k,0)}=1$.

Let $\mathcal{U}$ be a finite open cover of $X$ with ${\rm diam}(\mathcal{U})\leq 20\epsilon^\ast$ and ${\rm Leb}(\mathcal{U})\geq5\epsilon^\ast$.  The following measure approximation of $\mu^{*}$ is a slight modification of \cite[Lemma 2.1]{Liu2024}, so we omit the proof.

\begin{lem}\label{appro}
For each $k\in\mathbb{N}$, there exists $\nu_k\in M_f(X)$ satisfying
\\
$(1)$ $v_k=\sum_{i=1 }^{j_k}\lambda_i^k\nu_i^k$, where  $\nu_i^k\in M_f^e(X)$,  $\lambda_i^k \in \mathbb{Q}^{+}$ for
every  $ 1\leq i\leq j_k$,   and $\sum_{i=1}^{j_k}\lambda_i^k=1$;\\
$(2)$ one has
\begin{equation}\label{conv}
\inf_{\xi\succ\mathcal{U}}h_{\mu^\ast}(f,\xi)\leq\sum_{i=1}^{j_k}\lambda_i^k \inf_{\xi\succ\mathcal{U}}h_{\nu_i^k}(f,\xi)+\zeta;
\end{equation}\\
$(3)$ $D(\mu^\ast,\nu_k)<\delta_k$, where $D$ is the   compatible metric on $M(X)$ given in \eqref{metr}.
\end{lem}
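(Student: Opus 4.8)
The plan is to deduce this from \cite[Lemma 2.1]{Liu2024}, the only changes being that the condition ``$\mathrm{diam}(\xi)<\epsilon$'' there is replaced by ``$\xi\succ\mathcal U$'' for the fixed finite open cover $\mathcal U$, and that we also record the weak$^\ast$-closeness~(3). First I would pass to the ergodic decomposition $\mu^\ast=\int_{M_f^e(X)}\nu\,\dif\tau(\nu)$, where $\tau$ is a Borel probability measure on the Borel set $M_f^e(X)\subset(M(X),D)$, and set $g(\nu):=\inf_{\xi\succ\mathcal U}h_\nu(f,\xi)$, which is bounded ($0\le g\le\log\#\xi_0$ for any fixed $\xi_0\succ\mathcal U$). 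That $g$ is Borel is standard: $\nu\mapsto h_\nu(f,\xi)$ is Borel for each fixed partition $\xi$, and in the definition of $g$ the infimum over all $\xi\succ\mathcal U$ may be replaced by the infimum over a fixed countable family of partitions refining $\mathcal U$ which is cofinal in value.

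The core of the lemma is the affinity identity
\[
\inf_{\xi\succ\mathcal U}h_{\mu^\ast}(f,\xi)\;=\;\int_{M_f^e(X)}g(\nu)\,\dif\tau(\nu).
\]
Here ``$\ge$'' is automatic: $g$ is an infimum of the affine maps $\mu\mapsto h_\mu(f,\xi)$, hence concave, and Jensen's inequality along the ergodic decomposition gives $g(\mu^\ast)\ge\int g\,\dif\tau$. For ``$\le$'' it is enough, given $\eta>0$, to produce a single partition $\xi\succ\mathcal U$ with $h_{\mu^\ast}(f,\xi)\le\int g(\nu)\,\dif\tau(\nu)+\eta$, because then $\inf_{\xi'\succ\mathcal U}h_{\mu^\ast}(f,\xi')\le\int g\,\dif\tau+\eta$ and $\eta$ is arbitrary. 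Constructing such a $\xi$ is the ``refinement-over-$\mathcal U$'' analogue of the classical theorem that $\mu\mapsto h_\mu(f)$ is affine (Jacobs): one partitions $M_f^e(X)$ into finitely many pieces, on each picks a partition refining $\mathcal U$ that is near-optimal for a representative ergodic measure, and assembles these using the structure of the ergodic decomposition; this is exactly what \cite[Lemma 2.1]{Liu2024} does, so I would quote it.

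Granting the identity, the rest is elementary. Fix $\eta>0$ small (to be chosen $\le\zeta/3$ and $<\delta_k/2$). Since $(M(X),D)$ is compact metrizable, partition $M_f^e(X)$ into finitely many non-empty Borel pieces $P_1,\dots,P_L$, each of $D$-diameter $<\eta$ and on each of which $g$ oscillates by less than $\eta$ (intersect a finite cover of $M_f^e(X)$ by small $D$-balls with the level sets $g^{-1}[m\eta,(m+1)\eta)$), discarding the pieces with $\tau(P_l)=0$. Choose $\nu_l\in P_l$ and rationals $\lambda_l>0$ with $\sum_l\lambda_l=1$ and $\sum_l|\lambda_l-\tau(P_l)|$ as small as we wish, and put $\nu_k:=\sum_l\lambda_l\nu_l$. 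Joint convexity of $D$ together with $\int\nu\,\dif\tau=\mu^\ast$, $\mathrm{diam}_D(P_l)<\eta$ and the closeness of $\lambda_l$ to $\tau(P_l)$ give $D(\mu^\ast,\nu_k)<\delta_k$; and since $g(\nu_l)\ge\inf_{P_l}g\ge\sup_{P_l}g-\eta$,
\[
\sum_l\lambda_l\,g(\nu_l)\;\ge\;\sum_l\tau(P_l)\inf_{P_l}g-O(\eta)\;\ge\;\int g\,\dif\tau-O(\eta)\;=\;\inf_{\xi\succ\mathcal U}h_{\mu^\ast}(f,\xi)-O(\eta),
\]
so taking $\eta$ small makes this $\ge\inf_{\xi\succ\mathcal U}h_{\mu^\ast}(f,\xi)-\zeta$, which is (2). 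Relabelling $(\nu_l,\lambda_l)$ as $(\nu_i^k,\lambda_i^k)$ (merging equal measures if desired) yields (1), ergodicity and rationality being built in, and (3) has been checked.

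The main obstacle is the ``$\le$'' half of the affinity identity --- the ergodic-decomposition affinity of the $\mathcal U$-entropy $\mu\mapsto\inf_{\xi\succ\mathcal U}h_\mu(f,\xi)$ --- which is where the real work sits and which I would import from \cite[Lemma 2.1]{Liu2024} rather than redo; everything else is the joint convexity of $D$ and routine finite approximation. A minor technical point for a full write-up is the Borel measurability of $g$ and the existence of a countable cofinal family of partitions refining $\mathcal U$, handled as in the measurability discussions in the mean R\'enyi information dimension literature \cite{Gutman2021, ycz24}.
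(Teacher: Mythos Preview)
Your proposal is correct and matches the paper's approach: the paper omits the proof entirely, stating only that it is ``a slight modification of \cite[Lemma 2.1]{Liu2024}'', and you likewise import the core affinity step from that reference while spelling out the routine finite-approximation and weak$^\ast$-closeness pieces that the paper leaves implicit. One minor remark: for inequality~(2) you only need the ``$\le$'' half of your affinity identity (i.e.\ $g(\mu^\ast)\le\int g\,\dif\tau$), so the ``$\ge$'' half via concavity, while true, is not actually used.
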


Since $\nu_i^k$ is ergodic,  by Birkhoff ergodic theorem there exists $N\in\mathbb{N}$ such that the set
\begin{equation*}
Y_i(N)=\left\{
x\in X:D(\mathcal{E}_n(x),\nu_i^k)<\delta_k,\ \forall n>N
\right\}
\end{equation*}
has $\nu_i^k$-measure greater than $1-\zeta$ for all $ 1\leq i\leq j_k$. Applying Lemma \ref{ksmu}, one has
\begin{equation*}
\underline{h}_{\nu_i^k}^K(f,20\epsilon^\ast,\zeta)
\leq
\inf_{\xi\succ\mathcal{U}}h_{\nu_i^k}(f,\xi)
\leq
\underline{h}_{\nu_i^k}^K(f,5\epsilon^\ast,\zeta)<\infty.
\end{equation*}
Now take  an integer $n_k'\in\mathbb{N}$ such that\\
$(1)$ $\lambda_i^kn_k'$ is an integer  for all $1\leq i\leq j_k$, and 
\begin{equation}\label{nk'nk}
\frac{n_k'}{n_k}>\max\left\{
1-\zeta,1-\frac{1}{k}
\right\},
\end{equation}
where $n_k=n_{(k,Q_k)}=n_k'+(j_k-1)m_{(k,Q_k)}$.\\
$(2)$ there exists a subset $E_{k,i}$ that is a $(\lambda_i^kn_k',5\epsilon^\ast)$-separated set of $Y_i(N)$ satisfying 
\begin{equation*}
\#E_{k,i}\geq\exp\left[
\lambda_i^k n_k'\cdot\left(
\underline{h}_{\nu_i^k}^K(f,5\epsilon^\ast,\zeta)-\zeta
\right)
\right]\geq
\exp\left[
\lambda_i^k n_k'\cdot\left(
\inf_{\xi\succ\mathcal{U}}h_{\nu_i^k}(f,\xi)-\zeta
\right)
\right],
\end{equation*}
where the  second inequality is  derived from Lemma \ref{ksmu}.\\
$(3)$ for every $x\in E_{k,i}$,
\begin{equation*}
D(\mathcal{E}_{ \lambda_i^kn_k' }(x),\nu_i^k)<\delta_k.
\end{equation*}

By the specification property, for $k\geq 1$ and each $\underline{x}_{k}=(x_1,x_2,\ldots,x_{j_k})\in E_{k,1}\times E_{k,2}\times\cdots\times E_{k,j_k}$, there exists $y=y(\underline{x}_{k})\in X$ such that
\begin{equation*}
d\left(
f^{T_i+t_i}(y),f^{t_i}(x_i)
\right)<\epsilon_{(k,Q_k)}<\frac{\epsilon^\ast}{2^{k+2}},\ 1\leq i\leq j_k,\ 0\leq t_i\leq \lambda_i^kn_k'-1,
\end{equation*}
where $T_1=0$ and $T_i=\lambda_1^kn_k'+\cdots+\lambda_{i-1}^kn_k'+(i-1)m_k$ for $2\leq i\leq j_k$. For any two distinct $\underline{x}_k=(x_1,x_2,\ldots,x_{j_k})$,  $\underline{x}_k'=(x_1',x_2',\ldots,x_{j_k}')$ in $E_{k,1}\times \cdots\times E_{k,j_k}$, one has $y(\underline{x}_k)\neq y'(\underline{x}_k')$. Indeed, if $x_i\neq x_i'$ for some $1\leq i\leq j_k$, then
\begin{align*}
5\epsilon^\ast<
d_{\lambda_i^kn_k'}\left(
x_i,x_i'
\right)
\leq
d_{\lambda_i^kn_k'}\left(
x_i,f^{T_i}y
\right)
+
d_{\lambda_i^kn_k'}\left(
f^{T_i}y,f^{T_i}y'
\right)
+
d_{\lambda_i^kn_k'}\left(
f^{T_i}y',x_i'
\right)
\\
<\frac{\epsilon^\ast}{2^{k+1}}+d_{\lambda_i^kn_k'}\left(
f^{T_i}y,f^{T_i}y'
\right),
\end{align*}
which implies $d_{n_k}(y,y')>4\epsilon^\ast$.

Let 
\begin{equation*}
\mathcal{C}_{(k,Q_k)}=\left\{y(x_1,x_2,\ldots,x_{j_k}):\
(x_1,x_2,\ldots,x_{j_k})\in E_{k,1}\times E_{k,2}\times\cdots\times E_{k,j_k}
\right\}.
\end{equation*}
Then $\mathcal{C}_{(k,Q_k)}$ is an $(n_k,4\epsilon^\ast)$-separated set and
\begin{align}\label{ckc}
\#\mathcal{C}_{(k,Q_k)}
&= \Pi_{i=1}^{j_k}\#E_{k,i}\geq\Pi_{i=1}^{j_k}\exp\left[
\lambda_i^kn_k'\left(
\inf_{\xi\succ\mathcal{U}}h_{\nu_i^k}(f,\xi)-\zeta
\right)
\right]\nonumber\\
&
=
\exp\left[n_k'\left(\sum_{i=1}^{j_k}\lambda_i^k \inf_{\xi\succ\mathcal{U}}h_{\nu_i^k}(f,\xi)
-\zeta
\right) 
\right]
\overset{\text{by }(\ref{conv})}{\geq}
\exp\left[n_k'
\left(\inf_{\xi\succ\mathcal{U}}h_{\mu^\ast}(f,\xi)-2\zeta\right)
\right]\nonumber\\
&{\geq}
\exp\left[n_k(1-\zeta)
\left(H-3\zeta\right)
\right],
\end{align}
where the last inequality holds since ${\rm diam}(\xi)<  20\epsilon^\ast$ and  (\ref{to47}), (\ref{nk'nk}).

For $n\in\mathbb{N}$ and $\varphi\in C(X)$,  we write $S_n\varphi=\sum_{l=0}^{n-1}\varphi\circ f^i$ as the $n$-th Birkhoff sum of $\varphi$. Then for $y\in\mathcal{C}_{(k,Q_k)}$ and $\varphi\in C(X)$ taking values in $[0,1]$, 
\begin{align*}
&\left|
\int\varphi\dif\mathcal{E}_{n_k}(y)-\int\varphi\dif\left(\sum_{i=1}^{j_k}\lambda_i^k\mathcal{E}_{\lambda_i^kn_k'}(f^{T_i}(y))\right)
\right|\\
\leq&\left|
\frac{1}{n_k}\sum_{i=1}^{j_k}S_{\lambda_i^kn_k'}\varphi(f^{T_i}(y))
-
\frac{1}{n_k'}\sum_{i=1}^{j_k}S_{\lambda_i^kn_k'}\varphi(f^{T_i}(y))
\right|+\frac{n_k-n_k'}{n_k}\|\varphi\|\\
\leq&\left[\left|\frac{1}{n_k'}-\frac{1}{n_k}\right|\cdot n_k'+\frac{n_k-n_k'}{n_k}\right]\cdot\|\varphi\|<\frac{2}{k}\|\varphi\|,
\end{align*}
which shows that $D(\mathcal{E}_{n_k}(y),\sum_{i=1}^{j_k}\lambda_i^k\mathcal{E}_{\lambda_i^kn_k'}f^{T_i}(y))<\frac{2}{k}$. Since the map $x\mapsto\delta_x$ is uniformly continuous, we may assume that for any $x,y \in X$ with $d(x,y)<\frac{\epsilon^\ast}{2^{k+2}}$ implies  $D(\delta_x,\delta_y)<\frac{1}{k}$. 
Since
\begin{equation*}
D\left(\sum_{i=1}^{j_k}\lambda_i^k\mathcal{E}_{\lambda_i^kn_k'}(x_i), \nu_k \right)<\delta_k, \ \ D\left(\nu_k,\mu^\ast \right)<\delta_k,
\end{equation*}
one has that for any $y\in\mathcal{C}_{(k,Q_k)}$,
\begin{equation}\label{ymuast}
D\left(
\mathcal{E}_{n_k}(y),\mu^\ast
\right)
\leq \frac{2}{k}
+
D\left(
\sum_{i=1}^{j_k}\lambda_i^k\mathcal{E}_{\lambda_i^kn_k'}(f^{T_i}(y)),\sum_{i=1}^{j_k}\lambda_i^k\mathcal{E}_{\lambda_i^kn_k'}(x_i)
\right)
+2\delta_k<\frac{3}{k}+2\delta_k.
\end{equation}

Until now,  by virtue of the specification property, we have obtained two  sets $\mathcal{C}_{(k,0)}$ and  $\mathcal{C}_{(k,Q_k)}$ for each $k \geq 1$. We proceed to consider the cases  for $1\leq j <Q_k$.

Since the system has the specification property,  the generic point $G_\mu\neq\emptyset$ holds for all $\mu\in M_f(X)$ \cite{Pfister2007}. Using this fact for any $k\geq 1$ and $1\leq j<Q_k$,  there exist    $x_{k,j}\in G_{\mu_{(k,j)}}$ and $n_{(k,j)}\in\mathbb{N}$ such that
\begin{equation*}
D(\mathcal{E}_{n_{(k,j)}}(x_{k,j}),\mu_{(k,j)})<\delta_{(k,j)},
\end{equation*}
and 
\begin{equation*}
\frac{n_{(k,j)}}{n_{(k,j)}+m_{(k,j)}}>1-\frac{1}{k}.
\end{equation*}
Now set $\mathcal{C}_{(k,j)}=\{x_{k,j} \}$.

With these data $\{\mathcal{C}_{(k,j)}\}$ and $\{n_{(k,j)}\}$, we  are ready  to proceed the next construction:

{\bf Step $1$. Construct the intermediate sets $\mathcal{S}_{(k,j)}$.}

Recall that we have defined $N_{(k,0)}=1$ for any $k\geq 0$.
Now choose a strictly increasing sequence of integers $\{ N_{(k,j)} \}_{(k,j)\in\mathcal{A},~ j\neq 0}$ such that $N_{(1,1)}=1$ and
\begin{equation}\label{chongfubi}
\begin{split}
\sum_{0\leq k'\leq k+1}(n_{(k',0)}+m_{(k',0)})+n_{(k,j)_\ast}+m_{(k,j)_\ast}&\leq
\frac{1}{k}\sum_{(k',j')\leq(k,j)}[N_{(k',j')}\cdot(n_{(k',j')}+m_{(k',j')})] ,\\
\sum_{(k',j')\leq(k,j)}[N_{(k',j')}\cdot(n_{(k',j')}+m_{(k',j')})]
&\leq\frac{N_{(k,j)_\ast}\cdot n_{ (k,j)_\ast }}{k}
,\\
n_kN_{(k,Q_k)}&\geq(1-\zeta)\sum_{ 0\leq j\leq Q_k }[ (n_{(k,j)}+m_{(k,j)})N_{(k,j)}].
 \end{split}
\end{equation}

For $(k,0)\in\mathcal{A}$, we set $\mathcal{S}_{(k,0)}:=\mathcal{C}_{(k,0)}$ and $c_{k,0}:=n_{(k,0)}$. We divide the following two cases to define the rest $\mathcal{S}_{(k,j)}$  indexed  by $j\neq 0$.

Case $1$. $1\leq j<Q_k$.

Recall $\mathcal{C}_{(k,j)}=\{ x_{k,j} \}$.
By the specification property again, there exists $y_{k,j}\in X$ such that the pieces of orbits $\{ x_{k,j},\cdots,f^{n_{(k,j)-1}}(x_{(k,j)})  \}$ repeating $N_{(k,j)}$ times, can be $\epsilon_{(k,j)}$-traced by $y_{(k,j)}$ with  the gap $m_{(k,j)}$. 
Now we set $\mathcal{S}_{(k,j)}=\{y_{k,j}\}$, and $c_{k,j}=n_{(k,j)}N_{(k,j)}+(N_{(k,j)}-1)m_{(k,j)}$.

Case $2$. $j=Q_k$.

Enumerate the points in $\mathcal{C}_{(k,Q_k)}$ as 
\begin{equation*}
\mathcal{C}_{(k,Q_k)}=\{ y_i^k: i=1,2,\ldots,\#\mathcal{C}_{(k,Q_k)} \}.
\end{equation*}
For any $(i_1,\ldots,i_{N_{(k,Q_k)}})\in\{ 1,2,\ldots,\#\mathcal{C}_{(k,Q_k)} \}^{ N_{(k,Q_k)} }$, let $y(i_1,\ldots,i_{N_{(k,Q_k)}})\in X$ be a point given by the specification property, such that the orbit $\epsilon_{(k,Q_k)}$-shadows, with the gap $m_{(k,Q_k)}$, the pieces of orbits $\{ y_{i_l}^k,f(y_{i_l}^k),\ldots,f^{ n_k-1 }(y_{i_l}^k) \}$, $l=1,2,\ldots,N_{(k,Q_k)}$. We write
\begin{equation*}
\mathcal{S}_{(k,Q_k)}
=\left\{
y=y(i_1,\ldots,i_{N_{(k,Q_k)}})
:
(i_1,\ldots,i_{N_{(k,Q_k)}})\in\{ 1,2,\ldots,\#\mathcal{C}_{(k,Q_k)} \}^{ N_{(k,Q_k)} }
\right\}
\end{equation*}
 and  $c_{k,Q_k}=n_k\cdot N_{(k,Q_k)}+(N_{(k,Q_k)}-1)m_{(k,Q_k)}$. Since
$\mathcal{C}_{(k,Q_k)}$ is an $(n_k,4\epsilon^\ast)$-separated set, this implies that $\mathcal{S}_{(k,Q_k)}$ is a $(c_{k,Q_k},3\epsilon^\ast)$-separated set. Then, by (\ref{ckc}) and (\ref{chongfubi}),  the cardinality of  $\mathcal{S}_{(k,Q_k)}$ is bounded blew by
\begin{equation}\label{geshu}
\#\mathcal{S}_{(k,Q_k)}=\#\mathcal{C}_{(k,Q_k)}^{ N_{(k,Q_k)} }\geq\exp\left[
\sum_{ 0\leq j\leq Q_k }[ (n_{(k,j)}+m_{(k,j)})N_{(k,j)}]\cdot(1-\zeta)^2\cdot(H-3\zeta )
\right]. 
\end{equation}

Similar to the arguments in (\ref{ymuast}), one has
\begin{equation*}
D(\mathcal{E}_{c_{k,j }}(y_{k,j}),\mu_{(k,j)})<\frac{2}{k}+\delta_{(k,j)},\ \forall\ y_{k,j}\in\mathcal{S}_{(k,j)}~ \text{with}~ k\geq 1\ \text{and }1\leq j<Q_k,
\end{equation*}
and
\begin{equation*}
D(\mathcal{E}_{c_{k,Q_k}}(y),\mu^\ast)<\frac{5}{k}+2\delta_k,
\ \forall\ y\in\mathcal{S}_{(k,Q_k)}. 
\end{equation*}
Hence, noting that $\mu_{(k,Q_k)}=\mu^{*}$, in an unified  manner we always have
\begin{equation}\label{yukj}
D(\mathcal{E}_{c_{k,j }}(y),\mu_{(k,j)})<\frac{5}{k}+2\delta_{(k,j)},\ \forall\ y\in\mathcal{S}_{(k,j)}\ \text{with }k\geq 1,\ 1\leq j\leq Q_k.
\end{equation}

{\bf Step $2$. Construct the $\mathbb{F}_{(k,j)}$ and the Moran set $\mathbb{F}$.}

This step can be regarded  as an analogue of  the  construction of  the middle-third Cantor set of $[0,1]$.

Let $\mathcal{I}_{(0,0)}=\mathcal{S}_{(0,0)}$ and $M_{(0,0)}=n_{(0,0)}$. Suppose that we have already constructed the set $\mathcal{I}_{(k,j)^\ast}$, and  then we will describe how to construct $\mathcal{I}_{(k,j)}$. Let
\begin{align*}
M_{(k,j)}&=M_{ (k,j)^\ast }+m_{(k,j)}+c_{k,j};\\
M_{(k,j)}^i&=M_{ (k,j)^\ast }+i(m_{(k,j)}+n_{(k,j)}),\ 0\leq i\leq N_{(k,j)}.
\end{align*}
Note that $M_{(k,j)}^{N_{(k,j)}}=M_{(k,j)}$ and  the number of $\mathcal{S}_{(k,Q_k)}$ given in  (\ref{geshu})  has the estimation:
\begin{equation}\label{equ 3.11}
\#\mathcal{S}_{(k,Q_k)}\geq\exp\left[
(M_{(k,Q_k)}-M_{(k-1,Q_{k-1}) })(1-\zeta)^2(H-3\zeta)
\right].
\end{equation}
For $x\in\mathcal{I}_{(k,j)^\ast }$ and $y\in\mathcal{S}_{(k,j)}$,  by the specification property let $z=z(x,y)\in X$ be some point such that
\begin{equation*}
d_{ M_{ (k,j)^\ast } }(x,z)<\epsilon_{(k,j)},\ d_{c_{k,j}}(y,f^{ M_{(k,j)^\ast}+m_{(k,j)} }(z))<\epsilon_{(k,j)}.
\end{equation*}
We set $\mathcal{I}_{(k,j)}=\{ z(x,y): x\in\mathcal{I}_{(k,j)^\ast }, y\in\mathcal{S}_{(k,j)} \}$.
For $x\in\mathcal{I}_{(k,Q_k)^\ast}$ and  two distinct $y, y'\in \mathcal{S}_{(k,Q_k)}$, since $\mathcal{S}_{(k,Q_k)}$ is a  $(c_{k,Q_k},3\epsilon^\ast)$-separated set, one has
\begin{align*}
d_{ M_{(k,Q_k)} }(z(x,y),z(x,y'))>2\epsilon^\ast   ,
\end{align*}
which leads to
\begin{align}\label{equ 3.12}
\#\mathcal{I}_{(k,j)}
=\#\mathcal{I}_{(k,j)^\ast }\cdot\#\mathcal{S}_{(k,j)}
=\Pi_{(k',j')\leq(k,j)}\#\mathcal{S}_{(k',j')}=
\Pi_{(k',Q_{k'})\leq(k,j)}\#\mathcal{S}_{(k',Q_{k'})}.
\end{align}
Moreover, according to (\ref{yukj}), for $z\in\mathcal{I}_{(k,j)}$ with $j\neq 0$, we have
\begin{equation}\label{iapp}
D( \mathcal{E}_{c_{k,j}}( f^{ M_{(k,j)^\ast} }(z) ),\mu_{(k,j )})<\frac{6}{k}+2\delta_{(k,j)}.
\end{equation}

Now set
\begin{equation*}
\mathbb{F}_{(k,j)}=\bigcup_{ x\in\mathcal{I}_{(k,j)} }\overline{B}_{ M_{(k,j)}}(x,2\epsilon_{(k,j)}).
\end{equation*}
Then for each $(k,j)\in\mathcal{A}$, the following  statements are   satisfied:\\
$(1)$ $\overline{B}_{ M_{(k,j)}}(x,2\epsilon_{(k,j)})\cap \overline{B}_{ M_{(k,j)}}(x',2\epsilon_{(k,j)})=\emptyset$ holds for any distinct $x, x'\in \mathcal{I}_{(k,j)}$;\\
$(2)$ if $z\in \mathcal{I}_{(k,j)}$ descends from $x\in \mathcal{I}_{(k,j)^\ast}$, one has $$\overline{B}_{ M_{(k,j)} }(z,2\epsilon_{(k,j)})\subset \overline{B}_{ M_{(k,j)^\ast} }(x,2\epsilon_{(k,j)^\ast}).$$ 
This implies that  $\{\mathbb{F}_{(k,j)}: (k,j)\in\mathcal{A}\}$ is a sequence of decreasing non-empty closed subsets of $X$. Let 
\begin{equation*}
\mathbb{F}=\bigcap_{(k,j)\in\mathcal{A}}\mathbb{F}_{(k,j)}=\bigcap_{k\in\mathbb{N}}\mathbb{F}_{ (k,Q_k) }.
\end{equation*}

\begin{lem}
The fractal set $\mathbb{F}$ is contained in  $G_K^{\omega,U}$,  i.e.,
\begin{equation*}
\mathbb{F}\subset G_K^{\omega,U}= G_K\cap U\cap \{ x\in X: \omega_f(x)=X \}.
\end{equation*}
\end{lem}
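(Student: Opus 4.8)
The plan is to take an arbitrary $z\in\mathbb{F}$ and verify in turn the three membership conditions defining $G_K^{\omega,U}$. That $z\in U$ will be immediate: since $\mathbb{F}\subset\mathbb{F}_{(0,0)}=\overline{B}_{M_{(0,0)}}(a_0,2\epsilon_{(0,0)})$ with $M_{(0,0)}=1$ and $2\epsilon_{(0,0)}<\epsilon^\ast<\tilde\epsilon$, one gets $d(z,a_0)\le 2\epsilon_{(0,0)}<\tilde\epsilon$, hence $z\in\overline{B(a_0,\tilde\epsilon)}\subset U$. To get $\omega_f(z)=X$ I would use the density blocks $\mathcal{C}_{(k,0)}=\{a_k\}$: for a fixed $w\in X$ and $\delta>0$, pick $k$ with $5\epsilon_{(k,0)}<\delta$; the point $z$ descends from some $z_{(k,0)}\in\mathcal{I}_{(k,0)}$ with $d_{M_{(k,0)}}(z,z_{(k,0)})\le2\epsilon_{(k,0)}$, and by construction $f^{M_{(k,0)^\ast}+m_{(k,0)}}(z_{(k,0)})$ $\epsilon_{(k,0)}$-shadows $a_k$, which in turn $\epsilon_{(k,0)}$-shadows the $\epsilon_{(k,0)}$-dense set $\Delta_k$ furnished by Proposition \ref{prop3}. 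Choosing $a_i^k$ within $\epsilon_{(k,0)}$ of $w$ and adding up the three errors gives a time inside the $(k,0)$-block at which the orbit of $z$ is $5\epsilon_{(k,0)}<\delta$ from $w$; letting $k\to\infty$ (these times diverge, $\epsilon_{(k,0)}\to0$) yields $w\in\omega_f(z)$, so $\omega_f(z)=X$.

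The substantial point is $V_f(z)=K$, which I would prove by the two inclusions. For $K\subseteq V_f(z)$: given $\nu\in K$, for each $k$ choose $1\le i_k\le Q_k$ with $D(\mu_{(k,i_k)},\nu)<\tfrac1k$ (possible since $K\subset\bigcup_iB(\mu_{(k,i)},\tfrac1k)$). I would then transfer the local estimate $(\ref{iapp})$ from the ancestor $z_{(k,i_k)}\in\mathcal{I}_{(k,i_k)}$ of $z$ to $z$ itself, using $d_{M_{(k,i_k)}}(z,z_{(k,i_k)})\le 2\epsilon_{(k,i_k)}$ and the uniform continuity of $x\mapsto\delta_x$, so that $\mathcal{E}_{c_{k,i_k}}(f^{M_{(k,i_k)^\ast}}(z))$ is $D$-close to $\mu_{(k,i_k)}$ with an error tending to $0$. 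Splitting $\mathcal{E}_{M_{(k,i_k)}}(z)$ into its contribution over $[0,M_{(k,i_k)^\ast})$ and over the block $(k,i_k)$, and using the length conditions in $(\ref{chongfubi})$ (which make the prefix length and the interior specification gaps negligible relative to $c_{k,i_k}$), I would upgrade this to $D(\mathcal{E}_{M_{(k,i_k)}}(z),\mu_{(k,i_k)})\to0$, whence $\mathcal{E}_{M_{(k,i_k)}}(z)\to\nu$ and $\nu\in V_f(z)$.

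For $V_f(z)\subseteq K$: take a large time $n$, locate it inside some block $(k,j)$, and decompose $n\,\mathcal{E}_n(z)$ into the contributions of the completed subblocks $(k',j')\le(k,j)$, of all the specification gaps, and of the unfinished fragment of the current block. Using $(\ref{iapp})$, $(\ref{yukj})$ and $(\ref{ymuast})$, each completed subblock with $j'\ge1$ contributes approximately $c_{k',j'}\mu_{(k',j')}$ with $\mu_{(k',j')}\in K$, while the first inequality of $(\ref{chongfubi})$ (negligibility of each single-orbit length $n_{(k',0)}$, $n_{(k,j)}$ against prefix lengths) together with $(\ref{nk'nk})$ forces the "uncontrolled'' mass — the density blocks $\mathcal{C}_{(k',0)}$, all gaps, and the current fragment — to have relative weight $o(1)$ as $k\to\infty$. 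Hence $\mathcal{E}_n(z)$ lies within some $\gamma_k\to0$ of a convex combination of measures in $K$; since $K$ is compact and convex, $D(\mathcal{E}_n(z),K)\le\gamma_k\to0$, so every limit point of $\{\mathcal{E}_n(z)\}$ belongs to $K$, i.e. $V_f(z)\subseteq K$. Combining the two inclusions gives $z\in G_K$, and together with the first two steps, $z\in G_K^{\omega,U}$; as $z$ was arbitrary, $\mathbb{F}\subset G_K^{\omega,U}$.

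I expect the main obstacle to be the inclusion $V_f(z)\subseteq K$, because one must control $\mathcal{E}_n(z)$ for \emph{every} large $n$, not merely at block endpoints. The delicate bookkeeping is to separate the portions of the orbit that are "controlled'' (inside a subblock with $j'\ge1$, hence close to $\mu_{(k',j')}\in K$ after at least one repetition) from those that are not (the density blocks, the specification gaps, and the unfinished fragment), and to check — precisely via the three inequalities in $(\ref{chongfubi})$, reinforced by $(\ref{nk'nk})$ and $(\ref{ymuast})$ — that the latter are asymptotically weightless; once this is in place, everything else is the triangle inequality, uniform continuity of $x\mapsto\delta_x$, and the already-established counting and approximation estimates $(\ref{ckc})$, $(\ref{geshu})$, $(\ref{iapp})$.
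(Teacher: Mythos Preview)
Your argument for $z\in U$ and $\omega_f(z)=X$ matches the paper's. For $V_f(z)=K$ your overall strategy is sound, but the execution differs from the paper's in a way worth flagging, and there is one imprecision.

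\medskip
\textbf{Comparison.} For $V_f(z)\subseteq K$ you argue that $\mathcal{E}_n(z)$ is close to a convex combination of the $\mu_{(k',j')}\in K$, and then invoke convexity of $K$. The paper instead performs a four-case analysis depending on where $n$ falls (at a block boundary, just past a density block, or mid-block at $M_{(k,j)}^i$), and in each case shows that $\mathcal{E}_n(p)$ is close to a \emph{single} $\mu_{(k,j)}$. It can do this because the second inequality in (\ref{chongfubi}) makes only the last one or two blocks carry non-negligible weight, and the adjacency conditions $D(\mu_{(k,i)},\mu_{(k,i+1)})<\tfrac1k$, $D(\mu_{(k,Q_k)},\mu_{(k+1,1)})<\tfrac1k$ then merge those two contributions. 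The payoff of the paper's route is that it uses only connectedness of $K$, which is why the same proof skeleton is recycled in the Bowen section (Theorem~\ref{thm 1.2}) where $K$ is merely connected, not convex; your convexity-based shortcut is legitimate here (Theorem~\ref{thm 1.1} assumes $K$ convex) but would not transfer.

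\medskip
\textbf{An imprecision.} Your claim that ``the current fragment'' has relative weight $o(1)$ is not correct as stated: if $n=M_{(k,j)}^i$ with $i$ large, the partially completed block $(k,j)$ carries almost all the mass. What \emph{is} negligible (by the first line of (\ref{chongfubi})) is a single repetition $n_{(k,j)}+m_{(k,j)}$. The fix is to move the completed repetitions of the current block into the ``controlled'' part---each such repetition contributes $\approx\mu_{(k,j)}\in K$ by (\ref{iapp})---and leave only the last partial repetition in the ``uncontrolled'' remainder. With that adjustment your convex-combination argument goes through; the paper's Cases~3--4 are precisely this bookkeeping made explicit.
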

\begin{proof}[Proof of {Lemma 3.4}]
By the construction of $\mathbb{F}$,   we know $$\mathbb{F}\subset\mathbb{F}_{(0,0)}=\overline{B}_{n(0,0)}(a_0,\frac{\epsilon^\ast}{8})\subset {\overline{B}(a_0,\tilde{\epsilon})}\subset U.$$
For $p\in\mathbb{F}$,  we have $p\in \mathbb{F}_{(k,0)}$ for any $k\geq 1$. Hence, for any $a_i^k\in\Delta_k$, $1\leq i\leq q_k$, there exists $t\in[M_{(k,0)^\ast},M_{(k,0)}]$ such that $d(f^t(p),a_i^k)<4\epsilon_{(k,0)}$. This  implies that the orbit of $p$ is $5\epsilon_{(k,0)}$-dense in $X$.
By the arbitrariness of $k\geq 1$ and $p\in\mathbb{F}$, we have  $$\mathbb{F}\subset\{ x\in X: \omega_f(x)=X \}.$$ 

Let $p\in\mathbb{F}$. For sufficiently large $n\in\mathbb{N}$, to estimate $D(\mathcal{E}_n(p), K)$ we split into four cases to discuss.

Case $(1)$:  There exists $k>1$ such that $M_{(k+1,0)}\leq n<M_{(k+1,1)}^1$.

Then there is some $z\in\mathcal{I}_{(k,Q_k)}$ such that $p\in \overline{B}_{ M_{(k,Q_k)} }(z,\frac{\epsilon_{(k,Q_k)}}{2})$, and
\begin{align*}
	D( \mathcal{E}_n(p), \mu^\ast)\leq 
	&\frac{M_{ (k,Q_k)^\ast }}{n}D( \mathcal{E}_{ M_{ (k,Q_k)^\ast } }(p),\mu^\ast )
	+
	\frac{n-M_{(k,Q_k) }}{n}D (
	\mathcal{E}_{ n-M_{(k,Q_k) } }(f^{ M_{ (k,Q_k) } } (p)),\mu^\ast
	)
	\\
	&+
	\frac{c_{k,Q_k}+m_{(k,Q_k)} }{n}D( \mathcal{E}_{ c_{k,Q_k}+m_{(k,Q_k)} }(f^{ M_{ (k,Q_k)^\ast } } (p)),\mu^\ast )
	.
\end{align*}
Noticing that both $\frac{M_{ (k,Q_k)^\ast }}{n}$ and $\frac{n-M_{(k,Q_k) }}{n}$ are smaller than $\frac{1}{k}$, and using formula (\ref{iapp}), we have $D( \mathcal{E}_{ c_{k,Q_k}+m_{(k,Q_k)} }f^{ M_{ (k,Q_k)^\ast } } (p),\mu^\ast )<\frac{7}{k}+2\delta_{ (k,Q_k) }$. Thus,
\begin{equation*}
	D( \mathcal{E}_n(p), K)\leq \frac{9}{k}+2\delta_{ (k,Q_k) }.
\end{equation*}

Case $(2)$: There exists $(k,j)\in\mathcal{A}$ with $j\neq 0$ such that $M_{(k,j)}\leq n<M_{(k,j)_\ast}^1$.

Then there is some $z\in\mathcal{I}_{(k,j)}$ such that $p\in \overline{B}_{ M_{(k,j)} }(z,\frac{\epsilon_{(k,j)}}{2})$, and
\begin{align*}
	D( \mathcal{E}_n(p), \mu_{(k,j)})\leq& 
	\frac{M_{ (k,j)^\ast }}{n}D( \mathcal{E}_{ M_{ (k,j)^\ast } }(p),\mu_{(k,j)} )
	+
	\frac{n-M_{(k,j) }}{n}D (
	\mathcal{E}_{ n-M_{(k,j) } }(f^{ M_{ (k,j) } } (p)),\mu_{(k,j)}
	)\\
	&+
	\frac{c_{k,j}+m_{(k,j)}}{n}D( \mathcal{E}_{ c_{k,j}+m_{(k,j)} }(f^{ M_{ (k,j)^\ast } } (p)),\mu_{(k,j)} )\\
	.
\end{align*}
The similar argument  as in  Case $(1)$ yields  that $$D( \mathcal{E}_n(p), K)\leq \frac{9}{k}+2\delta_{ (k,j) }.$$

Case $(3)$: There exist $(k,1)\in\mathcal{A}$ and $i>1$ such that $M_{(k,1)}^i\leq n<M_{(k,1)}^{i+1}$.

One has
\begin{align*}
	D( &\mathcal{E}_n(p),\mu_{ (k,1)} )
	\leq
	\frac{ M_{ (k-1,Q_{k-1})^\ast  } }{n}
	D( \mathcal{E}_{M_{ (k-1,Q_{k-1})^\ast  } } (p), \mu_{ (k,1)} )\\
	&+
	\frac{ c_{ k-1,Q_{k-1} }+m_{ (k-1,Q_{k-1}) } }{n}
	D( \mathcal{E}_{c_{ k-1,Q_{k-1}}+m_{ (k-1,Q_{k-1}) }}(f^{M_{ (k-1,Q_{k-1})^\ast}} (p)), \mu_{ (k,1)} )\\
	&+
	\frac{c_{k,0}+m_{(k,0)}}{n}
	D( \mathcal{E}_{c_{k,0}+m_{(k,0)}}( f^{M_{ (k-1,Q_{k-1})}}(p)  )   , \mu_{ (k,1)} )\\
	&+\frac{ n_{(k,1)}+m_{(k,1)}  }{n}\sum_{l=0}^{i-1}
	D(\mathcal{E}_{n_{(k,1)}+m_{(k,1)}}( f^{ M_{(k,1)}^{l} } (p) )
	, \mu_{ (k,1)} )\\
	&+
	\frac{n-M_{(k,1)}^i}{n}
	D(\mathcal{E}_{n-M_{(k,1)}^i }( f^{M_{(k,1)}^i } (p)  )
	, \mu_{ (k,1)} ).
\end{align*}
Note that
\begin{equation*}
	\frac{ M_{ (k-1,Q_{k-1})^\ast  } }{n}<\frac{1}{k-1},\
	\frac{c_{k,0}+m_{(k,0)}}{n}<\frac{1}{k},\
	\frac{n-M_{(k,1)}^i}{n}<\frac{1}{k},
\end{equation*}
\begin{equation*}
	D(\mathcal{E}_{n_{(k,1)}+m_{(k,1)}}( f^{ M_{(k,1)}^{l} } (p) )
	, \mu_{ (k,1)} )\leq\frac{5}{k}+2\delta_{(k,1)},
\end{equation*}
and $D( \mathcal{E}_{c_{ k-1,Q_{k-1}}+m_{ (k-1,Q_{k-1}) }}(f^{M_{ (k-1,Q_{k-1})^\ast}} (p)), \mu_{ (k,1)} )$ is smaller than
\begin{equation*}
	D( \mathcal{E}_{c_{ k-1,Q_{k-1}}+m_{ (k-1,Q_{k-1})}}(f^{M_{ (k-1,Q_{k-1})^\ast}} (p)), \mu^\ast )+D(\mu^\ast, \mu_{ (k,1)})
	\leq \frac{8}{k-1}+2\delta_{ (k-1,Q_{k-1}) }.
\end{equation*}
This gives us 
\begin{equation*}
	D( \mathcal{E}_n(p), K)\leq\frac{16}{k-1}+4\delta_{(k-1,0)}.
\end{equation*}

Case $(4)$: There exist $(k,j)\in\mathcal{A}$ with $j>1$ and $i>1$  such that $M_{(k,j)}^i\leq n<M_{(k,j)}^{i+1}$.

In this case, using the expression
\begin{align*}
	\mathcal{E}_n(z)
	&=\frac{M_{(k,j-1)^\ast } }{n}\mathcal{E}_{ M_{(k,j-1)^\ast } }(p)
	+\frac{c_{k,j-1}+m_{(k,j-1)}}{n}
	\mathcal{E}_{ c_{k,j-1}+m_{(k,j-1)} }( f^{ M_{(k,j-1)^\ast} }(p) )
	\\
	&+\frac{n_{(k,j)}+m_{(k,j)}}{n}
	\sum_{l=0}^{i-1 }\mathcal{E}_{  
		n_{(k,j)}+m_{(k,j)}
	}( f^{ M_{(k,j)}^l }(p) )+\frac{n- M_{(k,1)}^i}{n}\mathcal{E}_{ n-M_{(k,1)}^i }(f^{ M_{(k,1)}^i  }(p)),
\end{align*}
and repeating the similar argument as above, one can obtain
\begin{equation*}
	D( \mathcal{E}_n(p), K)\leq\frac{15}{k}+4\delta_{(k,j)}.
\end{equation*}

To sum up, whatever case we consider, by letting $n\to\infty$ we get $V_f(p)\subset K$. Based on the above arguments, one can  also show  that the sequences $\{ \mathcal{E}_n(p) \}_{n\in\mathbb{N}}$ and $\{ \mu_{(k,j)} \}_{ (k,j)\in\mathcal{A} }$ have the same limit points. Notice that $K=\overline{ \{  \mu_{(k',j')}: (k',j')>(k,j)\}}$  for any fixed $(k,j)\in\mathcal{A}$. Consequently, we have $V_f(p)=K$, and hence $\mathbb{F}\subset G_K$.
\end{proof}

{\bf Step $3$. Construct the measure $\rho$ supported on $\mathbb{F}$.}

Now we define a probability measure $\rho$ on $\mathbb{F}$ satisfying  the $\rho$-measure of some appropriate dynamical balls decays exponentially fast along  the sequence $\{ M_k \}$, where $M_k:=M_{(k,Q_k)}$. Then we can invoke the Entropy  Distribution Principle stated in  Lemma \ref{lem5}  to obtain a lower bound of  $\epsilon$-packing topological entropy of $\mathbb{F}$.

Define
\begin{equation*}
\rho_k=\frac{1}{\# \mathcal{I}_{(k,Q_k)} }\sum_{x\in \mathcal{I}_{(k,Q_k)}}\delta_x.
\end{equation*}
For simplicity, we write  $\mathcal{I}_k$, $\mathbb{F}_k$  instead of  $\mathcal{I}_{(k,Q_k)}$, $\mathbb{F}_{(k,Q_k)}$ respectively.
Since $M(X)$ is compact, we may assume that $\rho_k\to\rho\in M(X)$ as $k\to\infty$. It is clear that $\rho_k( \mathbb{F}_k )=1$ for every $k$, and
for any $k>k_0$, one has $\rho_k(\mathbb{F}_{k_0})=1$ since $\mathbb{F}_k\subset\mathbb{F}_{k_0}$. Thus, $\rho( \mathbb{F}_{k_0} )\geq \limsup_{k\to\infty}\rho_k( \mathbb{F}_{k_0} )=1$. This implies that $\rho(\mathbb{F})=1$. Recall that $M_{(0,0)}=n_{(0,0)}=1$. By (\ref{equ 3.11}) and \eqref{equ 3.12}, we have
\begin{equation*}
\#\mathcal{I}_k\geq\exp\left[
(M_k -1)\cdot(1-\zeta)^2\cdot( H-3\zeta )
\right]. 
\end{equation*}

\begin{lem}\label{lem 3.5}
Let $k_0\geq 1$, $z\in X$ and $B_{M_{k_0}}(z,\frac{\epsilon^\ast}{2})$ be a Bowen ball  that intersects $\mathbb{F}$. Then,  for any $k>k_0$, one has
\begin{equation*}
\rho_{k}\left(B_{M_{k_0}}\left(z,\frac{\epsilon^\ast}{2}\right) \right)\leq\frac{1}{\#\mathcal{I}_{k_0}}.
\end{equation*}
\end{lem}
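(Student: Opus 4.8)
The plan is to estimate $\rho_k\bigl(B_{M_{k_0}}(z,\tfrac{\epsilon^\ast}{2})\bigr)$ by counting how many atoms of $\rho_k$ (i.e.\ points of $\mathcal{I}_k$) can lie in a single Bowen ball $B_{M_{k_0}}(z,\tfrac{\epsilon^\ast}{2})$. Since $\rho_k$ is the uniform measure on $\mathcal{I}_k$ with $\#\mathcal{I}_k$ atoms, it suffices to show that $B_{M_{k_0}}(z,\tfrac{\epsilon^\ast}{2})$ contains at most $\#\mathcal{I}_k/\#\mathcal{I}_{k_0}$ points of $\mathcal{I}_k$; dividing by $\#\mathcal{I}_k$ then gives the claimed bound $1/\#\mathcal{I}_{k_0}$. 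The structural fact I would exploit is that every point $w\in\mathcal{I}_k$ descends, through the nested construction of Step~2, from a uniquely determined ancestor $x=x(w)\in\mathcal{I}_{k_0}$, and that the fibre of this descent map has cardinality exactly $\#\mathcal{I}_k/\#\mathcal{I}_{k_0}=\Pi_{(k',Q_{k'})\le(k,Q_k),\,k'>k_0}\#\mathcal{S}_{(k',Q_{k'})}$, independent of the chosen ancestor.

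First I would show that if two points $w,w'\in\mathcal{I}_k$ both lie in $B_{M_{k_0}}(z,\tfrac{\epsilon^\ast}{2})$, then they share the same ancestor in $\mathcal{I}_{k_0}$. Indeed, by the triangle inequality $d_{M_{k_0}}(w,w')<\epsilon^\ast$. On the other hand, the ancestors $x=x(w)$ and $x'=x(w')$ in $\mathcal{I}_{k_0}$ satisfy $d_{M_{k_0}}(w,x)<2\epsilon_{(k_0,Q_{k_0})}$ and $d_{M_{k_0}}(w',x')<2\epsilon_{(k_0,Q_{k_0})}$ because, by the nesting property (2) established just before the definition of $\mathbb{F}$, each point of $\mathcal{I}_k$ lies in the closed Bowen ball $\overline{B}_{M_{k_0}}(\cdot,2\epsilon_{(k_0,Q_{k_0})})$ around its ancestor. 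Since $\epsilon_{(k_0,Q_{k_0})}=\epsilon^\ast/2^{\iota(k_0,Q_{k_0})+2}$ is much smaller than $\epsilon^\ast$, combining these inequalities gives $d_{M_{k_0}}(x,x')<\epsilon^\ast+4\epsilon_{(k_0,Q_{k_0})}\le 2\epsilon^\ast$. But $\mathcal{I}_{k_0}$ is $(M_{k_0},2\epsilon^\ast)$-separated — this is precisely the separation statement derived in Step~2 from the fact that $\mathcal{S}_{(k_0,Q_{k_0})}$ is $(c_{k_0,Q_{k_0}},3\epsilon^\ast)$-separated and the specification estimate $d_{M_{k_0}}(z(x,y),z(x,y'))>2\epsilon^\ast$ for $y\ne y'$. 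Hence $x=x'$.

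It then follows that all points of $\mathcal{I}_k\cap B_{M_{k_0}}(z,\tfrac{\epsilon^\ast}{2})$ lie in the single fibre over that common ancestor, so their number is at most $\#\mathcal{I}_k/\#\mathcal{I}_{k_0}$, and therefore
\begin{equation*}
\rho_k\left(B_{M_{k_0}}\left(z,\frac{\epsilon^\ast}{2}\right)\right)
=\frac{\#\bigl(\mathcal{I}_k\cap B_{M_{k_0}}(z,\tfrac{\epsilon^\ast}{2})\bigr)}{\#\mathcal{I}_k}
\le\frac{1}{\#\mathcal{I}_{k_0}}.
\end{equation*}
The main obstacle I anticipate is bookkeeping the separation constants: one must be careful that the shadowing errors accumulated along the levels $k_0,k_0+1,\dots,k$ (each of size $\epsilon_{(k',j')}$, which sum geometrically to something well below $\epsilon^\ast$) never corrupt the genuine $2\epsilon^\ast$-separation of $\mathcal{I}_{k_0}$, and that "descends from" is genuinely a well-defined map — i.e.\ the ancestor is unique. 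Both points are guaranteed by the disjointness property (1) and the nesting property (2) recorded immediately before the definition of $\mathbb{F}$, together with the geometric choice $\epsilon_{(k,j)}=\epsilon^\ast/2^{\iota(k,j)+2}$; once these are invoked the argument is just the triangle inequality plus counting.
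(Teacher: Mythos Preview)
Your proposal is correct and follows essentially the same approach as the paper: both count how many atoms of $\mathcal{I}_k$ can fall into the Bowen ball by showing they must all descend from a single ancestor in $\mathcal{I}_{k_0}$, so the count is bounded by the fibre cardinality $\#\mathcal{I}_k/\#\mathcal{I}_{k_0}$. The paper's proof is terse (``it is easy to check that $\#(\mathbb{F}_{k_0+1}\cap B_{M_{k_0}}(z,\frac{\epsilon^\ast}{2}))\le\#\mathcal{S}_{(k_0+1,Q_{k_0+1})}$'' and then the general case by the same counting), whereas you spell out the triangle-inequality argument explicitly.

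One small caution: the separation statement you cite from Step~2, namely $d_{M_{k_0}}(z(x,y),z(x,y'))>2\epsilon^\ast$, is proved there only for descendants of the \emph{same} ancestor $x$. To conclude that all of $\mathcal{I}_{k_0}$ is $(M_{k_0},2\epsilon^\ast)$-separated you need a short induction through earlier branching levels, and the constant degrades by the accumulated shadowing errors $\sum\epsilon_{(k',j')}$. You already anticipate this in your final paragraph, and since that geometric sum is at most $\epsilon^\ast/4$, the true separation of $\mathcal{I}_{k_0}$ is still well above the $\epsilon^\ast+4\epsilon_{(k_0,Q_{k_0})}\le 5\epsilon^\ast/4$ you need; so the argument goes through without change.
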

\begin{proof}[Proof of Lemma 3.5]
Since $B_{M_{k_0}}(z,\frac{\epsilon^\ast}{2})\cap \mathbb{F}\not=\emptyset$, the set $B_{M_{k_0}}(z,\frac{\epsilon^\ast}{2})\cap \mathbb{F}_{k_0}$ is also non-empty by the construction of $\mathbb{F}$ and  $\#(\mathbb{F}_{k_0}\cap B_{M_{k_0}}(z,\frac{\epsilon^\ast}{2}))=1$. Notice that  $\mathbb{F}_{k_0+1}$ descends from $\mathbb{F}_{k_0}$. Then  one has
\begin{equation*}
\#(\mathbb{F}_{k_0+1}\cap B_{M_{k_0}}(z,\frac{\epsilon^\ast}{2}))\leq\#\mathcal{S}_{(k_0+1,Q_{ k_0+1 }) }, 
\end{equation*}
 and hence
\begin{equation*}
\rho_{k_0+1}\left(
B_{M_{k_0}}(z,\frac{\epsilon^\ast}{2})
\right)\leq\frac{ \#\mathcal{S}_{(k_0+1,Q_{ k_0+1 }) }}{\#\mathcal{I}_{k_0+1}}
=
\frac{1}{\#\mathcal{I}_{k_0}}.
\end{equation*}
In general, for any $k>k_0$, one has
\begin{equation*}
\rho_{k}\left(
B_{M_{k_0}}(z,\frac{\epsilon^\ast}{2})
\right)\leq
\frac{ \Pi_{k_0+1\leq k'\leq k}\#\mathcal{S}_{ (k',Q_{k'}) } }{\#\mathcal{I}_{k}}=\frac{1}{\#\mathcal{I}_{k_0}}.
\end{equation*}
\end{proof}

By Lemma \ref{lem 3.5}, for sufficiently large $k_0$ and any Bowen ball $B_{ M_{k_0} }(z,\frac{\epsilon^\ast}{2})$ with $z\in\mathbb{F}$, since $B_{ M_{k_0} }(z,\frac{\epsilon^\ast}{2})$ is open,  one has
\begin{align*}
\rho\left(
B_{ M_{k_0} }(z,\frac{\epsilon^\ast}{2})
\right)
&
\leq\liminf_{k\to\infty}\rho_k\left(
B_{ M_{k_0} }(z,\frac{\epsilon^\ast}{2})
\right)\leq\frac{1}{\#\mathcal{I}_{k_0}}\\
&\leq
\exp\left[
-(M_{k_0}-1)(1-\zeta)^2\cdot( H-3\zeta )
\right]\\
&\leq
\exp\left[
-M_{k_0}(1-\zeta)^3\cdot( H-3\zeta )
\right]
\end{align*}
Using Lemma \ref{lem5}, one has
\begin{equation*}
(1-\zeta)^3\cdot(H-3\zeta)\leq h_{top}^P(f,\mathbb{F},\frac{\epsilon^\ast}{12})\leq h_{top}^P(f,G_K^{\omega,U},\frac{\epsilon^\ast}{12}).
\end{equation*}
The arbitrariness of $\zeta$ implies that
\begin{equation*}
 \sup_{\mu\in K}\inf_{{\rm diam}(\xi)<20\epsilon^\ast}h_{\mu}(f,\xi)\leq h_{top}^P(f,G_K^{\omega,U},\frac{\epsilon^\ast}{12}).
\end{equation*}
Dividing  in both sides of the above inequality by $|\log\epsilon^\ast|$, and then taking $\limsup_{\epsilon^\ast\to0}$, we have
\begin{equation}\label{3. 12}
\limsup_{\epsilon\to0}\frac{1}{|\log\epsilon|} \sup_{\mu\in K}\inf_{{\rm diam}(\xi)<\epsilon}h_{\mu}(f,\xi)\leq \overline{\rm mdim}_M^P(f,G_K^{\omega,U},d).
\end{equation}

Finally, by (\ref{3.1}) and (\ref{3. 12})  we get
\begin{align*}
\overline{\rm mdim}_M^P(f,G_K^{\omega,U},d)=\overline{\rm mdim}_M^P(f,G_K,d)=\limsup_{\epsilon\to0}\frac{1}{|\log\epsilon|} \sup_{\mu\in K}\inf_{{\rm diam}(\xi)<\epsilon}h_{\mu}(f,\xi),
\end{align*}
which proves  Theorem \ref{thm 1.1}.

\subsection{Bowen metric mean dimension of  saturated sets}
In this subsection, we prove Theorems \ref{thm 1.2} and \ref{thm 1.3}. 

\subsubsection{Upper bound of $\overline{\rm mdim}^B_M(f,G_K,d)$}

The following lemma allows us to get an upper bound of $\overline{\rm mdim}^B_M(f,G_K,d)$.
\begin{lem}{\rm \cite[Lemma 3.3,(3)]{Yuan2024}}\label{lem 3.6}
Let $(X,d,f)$ be a  TDS, and let $K\subset M_{f}(X)$ be a non-empty closed  subset.

\begin{itemize}
\item  [(1)] For any $\epsilon>0$,  we have $$h_{top}^{B}(f,\{x\in X:V_f(x)\cap K\not=\emptyset\},\epsilon)\leq\sup_{\mu\in K}\inf_{\mathrm{diam}(\xi)<\epsilon}h_{\mu}(f,\xi).$$
\item  [(2)] For any $\epsilon>0$, we have 
\begin{equation*}
h_{top}^{B}(f,G_{K},\epsilon)\le\inf_{\mu\in K}\inf_{\mathrm{diam}(\xi)<\epsilon}h_{\mu}(f,\xi).
\end{equation*}
\end{itemize}
\end{lem}

Thus, by  Lemma  \ref{lem 3.6} we have
\begin{equation}\label{3.13}
\overline{\rm mdim}_{M}^{B}(f,G_{K},d)
\le\limsup_{\epsilon\to0}\frac{1}{|\log\epsilon|}\inf_{\mu\in K}\inf_{\mathrm{diam}(\xi)<\epsilon}h_{\mu}(f,\xi).
\end{equation}

\subsubsection{Lower bound of $\overline{\rm mdim}^B_M(f,G_K^{\omega,U},d)$}

The proof for  the lower bound of $\overline{\rm mdim}^B_M(f,G_K^{\omega,U},d)$  is similar to that in subsection \ref{lower}. So we  only give a sketch for the proof. 

For any sufficiently small $\epsilon^\ast>0$, we show
\begin{equation}\label{bl}
\inf_{\mu\in K}\inf_{{\rm diam}(\xi)<20\epsilon^\ast}h_{\mu}(f,\xi)
\leq
h_{top}^B(f,G_K^{\omega,U},\frac{\epsilon^\ast}{2}).
\end{equation}

Fix $\zeta >0$ and pick  a sequence $\{ \mu_{(k,i)}: k\geq 1, 1\leq i\leq Q_k \}\subset K$ of measures in $K$ such that
\begin{equation*}
K\subset\cup_{i=1}^{Q_k}B( \mu_{(k,i)},\frac{1}{k} ),\ 
D(\mu_{(k,i)},\mu_{(k,i+1)})<\frac{1}{k},\ 
D(\mu_{(k,Q_k)},\mu_{(k+1,1)})<\frac{1}{k}.
\end{equation*}
Put $\mathcal{A}'=\{ (k,i):k\geq 1, 1\leq i\leq Q_k \}$, $\mathcal{A}=\{(k,0):k\geq 0\}\cup\mathcal{A}'$, and endow these sets with the lexicographic order again. Using the fixed notation in subsection \ref{lower}, we have the data $\{\delta_{(k,j)}\}_{ (k,j)\in\mathcal{A} }$, $\{\epsilon_{(k,j)}\}_{ (k,j)\in\mathcal{A} }$ and $\{m_{(k,j)}\}_{ (k,j)\in\mathcal{A} }$.

Assume that $H:=\inf_{\mu\in K}\inf_{{\rm diam}(\xi)<20\epsilon^\ast}h_{\mu}(f,\xi)>0$. Then one has
\begin{equation*}
\inf_{ {\rm diam}(\xi)<20\epsilon^\ast }h_{\mu_{(k,i)}}(f,\xi)>H-\zeta,\ \forall (k,i)\in\mathcal{A}'.
\end{equation*}
Let $\mathcal{U}$ be an open cover of $X$ with ${\rm diam}(\mathcal{U})\leq20\epsilon^\ast$, and  ${\rm Leb}(\mathcal{U})\geq5\epsilon^\ast$.
For $(k,i)\in\mathcal{A}'$, by Lemma \ref{appro} let $\nu_{(k,i)}=\sum_{l=1}^{p_{k,i}}\lambda_{k,i,l}\nu_l^{k,i}$ be the finite convex combination with rational coefficients of  $f$-ergodic probability measures  such that
\begin{equation*}
D( \mu_{(k,i)},\nu_{(k,i)} )<\delta_{(k,i)},\ \ \inf_{ \xi\succ\mathcal{U} }
h_{\mu_{(k,i)}}(f,\xi)\leq \sum_{l=1}^{p_{k,i}}\lambda_{k,i,l}\inf_{ \eta\succ\mathcal{U} }
h_{\nu_l^{k,i}}
(f,\eta)+\zeta.
\end{equation*}

Repeating the previous argument as in subsection \ref{lower},  we  choose an integer $n_{(k,i)}'$ such that the following conditions are satisfied:\\
$(1)$ for any $1\leq l\leq p_{k,i}$, $\lambda_{k,i,l}n_{(k,i)}'$ is an integer and
\begin{equation*}
\frac{n_{(k,i)}'}{n_{(k,i)}+m_{(k,i)}}>\max\{1-\zeta,1-\frac{1}{k} \},
\end{equation*}
where $n_{(k,i)}:=n_{(k,i)}'+(p_{k,i}-1)m_{(k,i)}$;\\
$(2)$ there exist subsets $E_{k,i,l}$, $1\leq l\leq p_{k,i}$, which are $(\lambda_{k,i,l}n_{(k,i)}',5\epsilon^\ast)$-separated satisfying
\begin{equation*}
\#E_{k,i,l}\geq\exp\left[
\lambda_{k,i,l}n_{(k,i)}'\cdot
\left(
\inf_{\eta\succ\mathcal{U}}h_{\nu_l^{k,i}}(f,\eta)-\zeta
\right)
\right];
\end{equation*}
$(3)$ for any $x\in E_{k,i,l}$,
\begin{equation*}
D( \mathcal{E}_{ \lambda_{k,i,l}n_{(k,i)}' }(x),\nu_l^{k,i} )<\delta_{(k,i)}.
\end{equation*}

Let 
\begin{equation*}
\mathcal{C}_{(k,i)}=\left\{
y=y(x_1,\ldots,x_{p_{k,i}}):  (x_1,\ldots,x_{p_{k,i}})\in E_{k,i,1}\times\cdots\times E_{k,i,p_{k,i}}
\right\},
\end{equation*}
where $y(x_1,\ldots,x_{p_{k,i}})$ is the point that $\epsilon_{(k,i)}$-shadows the pieces of orbits 
\begin{equation*}
\{ x_l,\ldots,f^{\lambda_{k,i,l}n_{(k,i)}'}(x_l) \},\ 1\leq l\leq p_{k,i},
\end{equation*} 
with gap $m_{(k,i)}$.
Then $\mathcal{C}_{(k,i)}$ is a  $(n_{(k,i)},4\epsilon^\ast)$-separated set  with
\begin{equation*}
\#\mathcal{C}_{(k,i)}\geq \exp\left[
n_{(k,i)}(1-\zeta)\cdot(H-2\zeta)
\right].
\end{equation*}

For $k\geq 0$, let $\mathcal{C}_{(k,0)}$, $n_{(k,0)}$, $N_{(k,0)}$ be defined as in subsection \ref{lower}. Choose a strictly increasing sequence $\{N_{(k,i)} \}_{(k,i)\in\mathcal{A}'}$ such that $N_{(1,1)}=1$ and
\begin{equation*}
\begin{split}
\sum_{0\leq k'\leq k+1}(n_{(k',0)}+m_{(k',0)})+n_{(k,j)_\ast}+m_{(k,j)_\ast}&\leq
\frac{1}{k}\sum_{(k',j')\leq(k,j)}[N_{(k',j')}\cdot(n_{(k',j')}+m_{(k',j')})] ,\\
\sum_{(k',j')\leq(k,j)}[N_{(k',j')}\cdot(n_{(k',j')}+m_{(k',j')})]
&\leq\frac{N_{(k,j)_\ast}\cdot n_{ (k,j)_\ast }}{k}.
\end{split}
\end{equation*}

Set $\mathcal{S}_{(k,0)}=\mathcal{C}_{(k,0)}$, $c_{k,0}=n_{(k,0)}$ for $k\geq 0$. For every  $(k,i)\in\mathcal{A}'$ and any tuple of points $(y_{k,i,1},y_{k,i,2},\ldots,y_{k,i,N_{(k,i)}})\in\mathcal{C}_{(k,i)}^{ N_{(k,i)} }$, there exists a point $z=z(y_{k,i,1},y_{k,i,2},\ldots,y_{k,i,N_{(k,i)}})$ that $\epsilon_{(k,i)}$-shadows the pieces of orbits
\begin{equation*}
\{y_{k,i,l},f(y_{k,i,l}),\ldots,f^{ n_{(k,i)} -1}(y_{k,i,l}) \},\ l=1,\cdots,N_{(k,i)},
\end{equation*}
with the same gap $m_{(k,i)}$. Denote by $\mathcal{S}_{(k,i)}$ the set of such shadowing points. It is a $(c_{k,i},3\epsilon^\ast)$-separated set, where $c_{k,i}=n_{(k,i)}N_{(k,i)}+(N_{(k,i)}-1)m_{(k,i)}$.

Let $\mathcal{I}_{(0,0)}=\mathcal{S}_{(0,0)}$ and $M_{(0,0)}=n_{(0,0)}$. Suppose that we have already constructed  the set $\mathcal{I}_{(k,j)^\ast}$. Let $M_{(k,j)}=M_{(k,j)^\ast}+m_{(k,j)}+c_{k,j}$. For $x\in\mathcal{I}_{(k,j)^\ast}$ and $y\in\mathcal{S}_{(k,j)}$, let $z=z(x,y)$ be the shadowing point such that $d_{M_{(k,j)^\ast}}( z,x )<\epsilon_{(k,j)}$ and  $$d_{c_{k,j}}(y,f^{ M_{(k,j)^\ast}+m_{(k,j)} }(z))<\epsilon_{(k,j)}.$$
We set $\mathcal{I}_{(k,j)}=\{ z=z(x,y): x\in\mathcal{I}_{(k,j)^\ast},y\in\mathcal{S}_{(k,j)} \}$, and then define $$\mathbb{F}=\cap_{(k,j)\in\mathcal{A} }\mathbb{F}_{(k,j)},$$ where $\mathbb{F}_{(k,j)}=\cup_{z\in\mathcal{I}_{(k,j)}}\overline{B}_{ M_{(k,j)} }(z,2\epsilon_{(k,j)})$. One can verify that  $\mathbb{F}\subset G_K^{\omega,U}$.

We proceed to define a sequence of measures $\{\rho_{(k,j)}\}_{(k,j)\in\mathcal{A}}$ by
\begin{equation*}
\rho_{(k,j)}:=\frac{1}{\#\mathcal{I}_{(k,j)}}\sum_{x\in\mathcal{I}_{(k,j)} }\delta_x.
\end{equation*}
Without loss of generality, we can assume that  $\rho_{(k,j)}\to\rho$. Furthermore, one can  check that $\rho(\mathbb{F})=1$, and for every sufficiently large $n$ and  $B_{n}(z,\frac{\epsilon^\ast}{2}) \cap \mathbb{F}$, it holds  that
\begin{equation*}
\rho(B_{n}(z,\frac{\epsilon^\ast}{2})  )\leq\exp\left[
-n( 1-\zeta )^2(H-2\zeta)
\right].
\end{equation*} 
Then,  by the Lemma \ref{bowdis} we have
\begin{equation*}
( 1-\zeta )^2(H-2\zeta) \leq h_{top}^B(f,\mathbb{F},\frac{\epsilon^\ast}{2}) \leq h_{top}^B(f,G_K^{\omega,U},\frac{\epsilon^\ast}{2}).
\end{equation*}
Letting $\zeta \to 0$ gives us
$$\inf_{\mu\in K}\inf_{{\rm diam}(\xi)<20\epsilon^\ast}h_{\mu}(f,\xi)\leq h_{top}^B(f,G_K^{\omega,U},\frac{\epsilon^\ast}{2}) $$

Therefore,  by (\ref{3.13}) and (\ref{bl}) we  get
\begin{equation*}
\overline{\rm mdim}_{M}^{B}(f,G_K^{\omega,U},d)=\overline{\rm mdim}_{M}^{B}(f,G_{K},d)=\limsup_{\epsilon\to0}\frac{1}{|\log\epsilon|}\inf_{\mu\in K}\inf_{\mathrm{diam}\xi<\epsilon}h_{\mu}(f,\xi).
\end{equation*}

\subsubsection{Proof of  Theorem \ref{thm 1.3}}

The  more satisfactory  results would be to exchange  the order of  $\limsup_{\epsilon\to 0}$ and $\sup_{\mu \in K}$ (or  $\inf_{\mu \in K}$) of Theorems \ref{thm 1.1} and \ref{thm 1.2}.  However, up to now, the authors remain unaware of whether there exist any counter-examples among infinite entropy systems with the specification property for which the following equalities hold strictly, i.e.,
$$\sup_{\mu \in K}\left\{\limsup_{\epsilon\to0}\frac{1}{|\log\epsilon|}
\inf_{  {\rm diam}(\xi)<\epsilon }h_\mu(f,\xi)\right\}< \overline{\rm mdim}_M^{P}(f,G_K^{\omega,U},d),$$
$$\inf_{\mu \in K}\left\{\limsup_{\epsilon\to0}\frac{1}{|\log\epsilon|}
\inf_{  {\rm diam}(\xi)<\epsilon }h_\mu(f,\xi)\right\}< \overline{\rm mdim}_M^{B}(f,G_K^{\omega,U},d).$$

Inspired  by the definition  of  mean R\'enyi information dimension \cite{Gutman2021}, we define two variants of mean R\'enyi information dimension of invariant measures. 

 Let $K$ be a closed set of $M_f(X)$. For any $\mu \in K$, we use $M_{K}(\mu)$ to denote the space of sequences of invariant measures within $K$ which converge to $\mu$ in the weak$^{*}$-topology. 
 \begin{defn}
 	For any $\mu \in K$, we  define the Bowen  and packing R\'enyi information dimensions of $\mu$ as
 \begin{align*}
 	\underline{\rm mrid}_{\mu}^B(f,K,d):&=\inf_{(\mu_{\epsilon})_{\epsilon} \in M_K(\mu)}\{\liminf_{\epsilon\to 0} \frac{1}{|\log\epsilon|}\inf_{\mathrm{diam}(\xi)<\epsilon}h_{\mu_{\epsilon}}(f,\xi)\},\\
 	\overline{\rm mrid}_{\mu}^P(f,K,d):&=\sup_{(\mu_{\epsilon})_{\epsilon} \in M_K(\mu)}\{\limsup_{\epsilon\to 0} \frac{1}{|\log\epsilon|}\inf_{\mathrm{diam}(\xi)<\epsilon}h_{\mu_{\epsilon}}(f,\xi)\},
 \end{align*}
 respectively.
 \end{defn}
With the help of  Bowen and packing R\'enyi information dimensions, we establish  the variational principles for Bowen and packing metric mean dimensions of  saturated sets, whose form are close to  the ones of Bowen and packing topological entropies of saturated sets \cite{Pfister2007, Zhou2012}.




Now we are ready to prove Theorem \ref{thm 1.3}. 
We first prove the variational principle:
$$\underline{\rm mdim}_{M}^{B}(f,G_{K},d)=\inf_{\mu \in K}\underline{\rm mrid}_{\mu}^B(f,K,d).$$

 Fix $\mu \in K$. For every  $(\mu_{\epsilon})_{\epsilon}\in M_K(\mu)$, by Lemma  \ref{lem 3.6} it holds that $$h_{top}^{B}(f,G_{K},\epsilon)\le\inf_{\mathrm{diam}(\xi)<\epsilon}h_{\mu_{\epsilon}}(f,\xi).$$
This shows that
 $$\underline{\rm mdim}_{M}^{B}(f,G_{K},d)\leq \liminf_{\epsilon\to 0} \frac{1}{|\log\epsilon|}\inf_{\mathrm{diam}(\xi)<\epsilon}h_{\mu_{\epsilon}}(f,\xi)$$ and hence  $	\underline{\rm mdim}_{M}^{B}(f,G_{K},d)\leq \inf_{\mu \in K}\limits \underline{\rm mrid}_{\mu}^B(f,K,d)$.

To show the reverse inequality $$\underline{\rm mdim}_{M}^{B}(f,G_{K},d)\geq \inf_{\mu \in K}\limits \underline{\rm mrid}_{\mu}^B(f,K,d),$$
we  assume that  $\underline{\rm mdim}_{M}^{B}(f,G_{K},d)$ is finite; otherwise there is nothing left to prove.  Let $\gamma >0$. By Theorem \ref{thm 1.2},  we can choose  a strictly  decreasing sequence  $\epsilon_k \to 0$ as $k \to 0$ and a sequence  $\{\mu_k\}$ of measures in $K$ such that for any $k$,
\begin{align}\label{inequ 3.18}
\frac{1}{|\log\epsilon_k|}\inf_{\mathrm{diam}(\xi)<\epsilon_k}h_{\mu_k}(f,\xi)<\underline{\rm mdim}_{M}^{B}(f,G_{K},d)+\gamma.
\end{align}
 Without loss of generality, we assume that $\mu_k \rightarrow \mu$ as $k \to \infty$. Then $\mu \in K$ since $K$ is closed.  For every $\epsilon >0$, if $\epsilon \in (\epsilon_{k+1},\epsilon_k]$ for some $k\geq 1$, we define  $\nu_{\epsilon}:=\epsilon_{k}$. Then $(\nu_\epsilon)\in M_K(\mu)$. Using (\ref{inequ 3.18}), we have  
 $$\underline{\rm mrid}_{\mu}^B(f,K,d)\leq \underline{\rm mdim}_{M}^{B}(f,G_{K},d)+\gamma.$$
 Letting $\gamma \to 0$, we have 
 $$\inf_{\mu \in K}\underline{\rm mrid}_{\mu}^B(f,K,d)\leq \underline{\rm mdim}_{M}^{B}(f,G_{K},d).$$

Next we show  the variational principle:
$$\overline{\rm mdim}_{M}^{P}(f,G_{K},d)= \sup_{\mu \in K}\limits \overline{\rm mrid}_{\mu}^P(f,K,d).$$

 Using  (\ref{3.2}), there exists a constant $c>0$ for every sufficiently small $\epsilon >0$,
$$\sup_{\mu\in K}\inf_{{\rm diam}(\xi)<\epsilon}h_{\mu}(f,\xi)\leq h_{top}^P(f,G_K, c\epsilon).$$
This yields  the inequality $$ \sup_{\mu \in K}\overline{\rm mrid}_{\mu}^P(f,K,d)\leq\overline{\rm mdim}_M^{P}(f,G_K,d).$$
On the other hand, let $\gamma >0$  and put $M_{\gamma}:=\min\{\frac{1}{\gamma},\overline{\rm mdim}_{M}^{P}(f,G_{K},d)-\gamma\}.$ By Theorem \ref{thm 1.1},  we can choose  a sequence $\{\epsilon_k\}_k$ of positive real numbers that converges to $0$ and a sequence  $\{\mu_k\}$ of measures in $K$ such that for sufficiently large $k$,
$$M_{\gamma}< \frac{1}{|\log\epsilon_k|}\inf_{\mathrm{diam}(\xi)<\epsilon_k}h_{\mu_k}(f,\xi).$$
By the similar arguments,  we have 
$$\overline{\rm mdim}_M^{P}(f,G_K,d) \leq \sup_{\mu \in K}\overline{\rm mrid}_{\mu}^P(f,K,d).$$

\subsection{Upper capacity metric mean dimension of  saturated sets}
In this subsection, we  give the proof of Theorem \ref{thm 1.4}.

By Proposition  \ref{prop 2.1}, it is clear that $$\overline{\rm mdim}_M^{UC}(f,G_K^{\omega,U},d)
 \leq \overline{\rm mdim}_M^{UC}(f,G_K,d)
 \leq \overline{\rm mdim}_M(f,X,d)$$
 is clear. Thus, it suffices to show  $$\overline{\rm mdim}_M(f,X,d)\leq \overline{\rm mdim}_M^{UC}(f,G_K^{\omega,U},d).$$ 
 
 In light of  Theorem \ref{mlk}, we need to show that for every sufficiently small $\epsilon^\ast>0$, $\zeta>0$  and $\delta\in(0,1)$, one has
\begin{equation}\label{equ 3.19}
\sup_{\mu\in M_f^e(X)}\underline{h}_{\mu}^K(f,3\epsilon^\ast,\delta)-2\zeta\leq h_{top}^{UC}(f,G_K^{\omega,U},\epsilon^\ast).
\end{equation}
Here,  we  will give a sketch for  the proof of (\ref{equ 3.19}) since it is essentially analogous to that of the lower bound of  $\overline{\rm mdim}^P_M(f,G_K^{\omega,U},d)$ presented in  subsection \ref{lower}.

Write $H=\sup_{\mu\in M_f^e(X)}\underline{h}_{\mu}^K(f,3\epsilon^\ast,\delta)$ and assume that $H>0$. 
Take $\mu^\ast\in M_f^e(X)$ such that
\begin{equation*}
\underline{h}_{\mu^\ast}^K(f,3\epsilon^\ast,\delta)>H-\zeta.
\end{equation*}
Then,  for sufficiently large $\mathcal{N}\in\mathbb{N}$, there exists a $(\mathcal{N},3\epsilon^\ast)$-separated subset $S_\mathcal{N}$ with
\begin{equation*}
\#S_\mathcal{N}\geq \exp[
\mathcal{N}(H-2\zeta)
].
\end{equation*}

Since $K$ is a connected closed subset, a standard argument shows that there exists  a family $\{ \mu_{k,i}: k\geq 1, 1\leq i\leq Q_k\}$ of measures in $K$ such that for all $k\geq 1$, $1\leq i<Q_k$,
\begin{equation*}
K\subset\bigcup_{i=1}^{Q_k}B(\mu_{k,i},\frac{1}{k}),\ D(\mu_{k,i},\mu_{k,i+1})<\frac{1}{k},\ 
\text{and }D(\mu_{k,Q_k},\mu_{k+1,1})<\frac{1}{k}.
\end{equation*} 
Consider the set $\mathcal{A}'=\{  
(k,i): k\geq 1, 1\leq i\leq Q_k
\}$ and  endow $\mathcal{A}=\{(k,0): k\in\mathbb{Z}\ \text{and }k\geq-1 \}\cup\mathcal{A}'$  with the lexicographic order. Similarly, the sequences $\{\epsilon_{(k,j)} \}_{(k,j)\in\mathcal{A}}$, $\{\delta_{(k,j)}\}_{(k,j)\in\mathcal{A}}$, $\{m_{(k,j)} \}_{(k,j)\in\mathcal{A}}$ are constructed  as in subsection \ref{lower}.

For the non-empty open set $U$, there exist $a_0\in X$ and $\epsilon_0\in(0,\epsilon^\ast)$ with ${\overline{B}(a_0,\epsilon_0)}\subset U$.
Set $\mathcal{C}_{(-1,0)}=\{ a_0 \}$, $n_{(-1,0)}=c_{(-1,0)}=N_{(-1,0)}=1$, and  $\mathcal{C}_{(0,0)}=S_{\mathcal{N}}$, $c_{(0,0)}=\mathcal{N}$, $N_{(0,0)}=1$.
For $k\geq 1$, let $\mathcal{C}_{(k,0)}$, $n_{(k,0)}$, $c_{(k,0)}$ and $N_{(k,0)}$ be the same as in subsection \ref{lower}. For $(k,i)\in\mathcal{A}'$, since  $G_{\mu_{k,i}}\neq\emptyset$, one can pick $x_{k,i}\in X$ and $n_{(k,i)}\in\mathbb{N}$ such that
\begin{equation*}
D( \mathcal{E}_{n_{(k,i)}}(x), \mu_{k,i})<\delta_{(k,i)},\ \frac{n_{(k,i)}}{n_{(k,i)}+m_{(k,i)}}>\max\left\{
1-\zeta,1-\frac{1}{k}
\right\}.
\end{equation*}
Set $\mathcal{C}_{(k,i)}=\{ x_{k,i} \}$. Now choose a strictly increasing sequence of integers $\{N_{(i,j)}  \}_{ (k,j)\in\mathcal{A}\ \text{with }j\neq 0 }$ with $N_{(1,1)}=1$ and
\begin{equation*}
\begin{split}
&n_{(k,j)_\ast}+m_{(k,j)_\ast}\leq\frac{1}{k}\sum_{(k',j')\leq(k,j) }\left[
N_{(k',j')}\cdot(n_{(k',j')}+m_{(k',j')} )
\right],\\
&\sum_{(k',j')\leq(k,j) }\left[
N_{(k',j')}\cdot(n_{(k',j')}+m_{(k',j')} )
\right]\leq\frac{ N_{ (k,j)_\ast }\cdot n_{(k,j)_\ast} }{k}.
\end{split}
\end{equation*}
Based on  the data $\mathcal{C}_{(k,j)}$, $N_{(k,j)}$ for $(k,j)\in\mathcal{A}$, similar to the previous subsections, one can construct the intermediate sets $\mathcal{S}_{(k,j)}$, $\mathcal{I}_{(k,j)}$, $\mathbb{F}_{(k,j)}$ to obtain a fractal set
\begin{equation*}
\mathbb{F}=\bigcap_{(k,j)\in\mathcal{A}}\mathbb{F}_{(k,j)}\subset G_K^{\omega,U}.
\end{equation*}
It is worth mentioning that for any  $(k,j)>(0,0)$, $\#\mathcal{I}_{(k,j)}=\# S_{\mathcal{N}}$. Moreover, for any distinct $x$, $x'\in\mathcal{I}_{(0,0)}$, one has
\begin{equation*}
d_{ M_{(0,0)} }(x,x')>2\epsilon^\ast.
\end{equation*}
Recall  $M_{(0,0)}=1+m(\frac{\epsilon^\ast}{8})+\mathcal{N}$.
Let  $y$, $y'\in\mathbb{F}$ with $y\in \overline{B}_{M_{(0,0)} }(x,2\epsilon_{(0,0)})$ and $y'\in \overline{B}_{M_{(0,0)} }(x',2\epsilon_{(0,0)})$. Then
one has
\begin{equation*}
d_{M_{(0,0)} }(y,y')>d_{ M_{(0,0)} }(x,x')-d_{ M_{(0,0)} }(x,y)-d_{ M_{(0,0)} }(x',y')>\epsilon^\ast.
\end{equation*}
Hence, $\mathbb{F}$ is  a $(1+m(\frac{\epsilon^\ast}{8})+\mathcal{N},\epsilon^\ast)$-separated subset with  $\# S_{\mathcal{N}}\geq \exp[\mathcal{N}(H-2\zeta)]$. Thus,
\begin{equation*}
h_{top}^{UC}(f,G_K^{\omega,U},\epsilon^\ast)\geq\limsup_{\mathcal{N}\to\infty}\frac{\mathcal{N}(H-2\zeta)}{1+m(\frac{\epsilon^\ast}{8})+\mathcal{N}}=\sup_{\mu\in M_f^e(X)}\underline{h}_{\mu}^K(f,3\epsilon^\ast,\delta)-2\zeta.
\end{equation*}
This implies  the inequality (\ref{equ 3.19}) by letting $\zeta \to 0$.

\subsection{Metric mean dimensions of sup sets and sub sets}

In this section, we  consider  two  variants of saturated sets, namely  sup sets and sub sets, and establish  variational principles for metric mean dimensions of sup sets and sub sets.

In general, given a point  $x\in X$, the limit $\lim_{n\to\infty}\mathcal{E}_n(x)$ may not exist in the  weak$^{*}$-topology. The set of such points is denoted by  $I(f)$,  known  as  the  historic set\footnote[1]{This terminology is due to Rulle's original paper \cite{rue01}.} of $f$.  To understand the underlying geometry structure of $I(f)$,  we define the following sets based on the framework introduced and further developed by Olsen  \cite{o02,o03,o06,o08} and Olsen and Winter \cite{ow03}. More precisely, given a non-empty subset $C \subset M_f(X)$, we  define the \emph{sup set and sub set of $C$} as
\begin{align*}
^CG:&=\{x\in X:V_f(x)\supset C\},\\
G^C:&=\{x\in X:V_f(x)\subset C\},
\end{align*}
respectively. More generally, for two non-empty subsets \(C_1,C_2 \subset M_f(X)\), we can define
\[
G(C_1, C_2):=\{x\in X:  C_1\subset V_f(x)\subset C_2\},
\]
to  recover the above notions  in an unified manner.

Now, we are ready to establish variational principles for  \(G^{C},^CG\).

\begin{thm}\label{thm 3.7}
Let $(X,d,f)$ be a TDS with the  specification property. Then for any non-empty subset $C\subseteq M_{f}(X)$, 
\begin{align*}
\overline{\rm mdim}_M^{B}\left(f,^CG,d\right)=\limsup_{\epsilon\to0}\frac{1}{|\log\epsilon|}
\inf_{\mu\in C}
\inf_{  {\rm diam}(\xi)<\epsilon }h_\mu(f,\xi).
\end{align*}
\end{thm}

\begin{proof}
We claim that for every $\epsilon >0$,
\begin{equation}\label{iic}
\inf_{\mu\in C}
\inf_{  {\rm diam}(\xi)<\epsilon }h_\mu(f,\xi)=\inf_{\mu\in \overline{{\rm co}
}(C)}\inf_{  {\rm diam}(\xi)<\epsilon }h_\mu(f,\xi),
\end{equation}
where  $\overline{{\rm co}
}(C)$ denotes the  closure of the convex hull of $C$.

Indeed, since $C\subset \overline{{\rm co}
}(C)$, it suffices to show $$\inf_{\mu\in C}
\inf_{  {\rm diam}(\xi)<\epsilon }h_\mu(f,\xi)\leq \inf_{\mu\in \overline{{\rm co}
}(C)}\inf_{  {\rm diam}(\xi)<\epsilon }h_\mu(f,\xi).$$ 
Let $\mu \in  \overline{{\rm co}
}(C)$. Choose $\mu_n=\sum_{j=1}^{k_n}\lambda_n^{(j)}v_n^{(j)}$  with $v_n^{(j)} \in C$, $0<\lambda_n^{(j)}<1$ and  $\sum_{j=0}^{k_n}\lambda_n^{(j)}=1$ such that $\mu_n \rightarrow \mu$ as $n \to \infty$. The upper semi-continuity of $ \inf_{  {\rm diam}(\xi)<\epsilon }h_\mu(f,\xi)$ on $M_f(X)$ (cf. \cite[Lemma 2.3, (3)]{fw16}) implies that
$$\limsup_{n\to \infty}~{\inf_{  {\rm diam}(\xi)<\epsilon }h_{\mu_n}(f,\xi)}\leq \inf_{  {\rm diam}(\xi)<\epsilon }h_\mu(f,\xi).$$
Since  $ \inf_{  {\rm diam}(\xi)<\epsilon }h_{\mu}(f,\xi)$ is  a concave  function on $M_f(X)$,  for every $\gamma >0$  we can deduce that
\begin{equation*}
\inf_{\nu\in C}\inf_{  {\rm diam}(\xi)<\epsilon }h_\nu(f,\xi)< \inf_{  {\rm diam}(\xi)<\epsilon }h_\mu(f,\xi)+\gamma.
\end{equation*}
 This yields the desired   inequality  \eqref{iic}.

Notice that $G_{\overline{{\rm co}}(C)}\subset ^CG$, we have
\begin{align*}
\overline{\rm mdim}_M^{B}\left(f,^CG,d\right)&\geq \overline{\rm mdim}_M^{B}\left(f,G_{\overline{{\rm co}}(C)},d\right)\\
&=\limsup_{\epsilon\to0}\frac{1}{|\log\epsilon|}
\inf_{\mu\in \overline{{\rm co}}(C)}
\inf_{  {\rm diam}(\xi)<\epsilon }h_\mu(f,\xi) ~\text{by Theorem \ref{thm 1.2}}\\
&=\limsup_{\epsilon\to0}\frac{1}{|\log\epsilon|}
\inf_{\mu\in C}
\inf_{  {\rm diam}(\xi)<\epsilon }h_\mu(f,\xi).
\end{align*}

Fix $\mu \in C$ and $\epsilon >0$. By Lemma \ref{lem 3.6}, one has
\begin{align*}
h_{top}^B(f,^CG,\epsilon)\leq  h_{top}^B(f,\{x\in X:\mu \in V_f(x)\},\epsilon)
\leq  \inf_{  {\rm diam}(\xi)<\epsilon }h_\mu(f,\xi),
\end{align*}
and hence $h_{top}^B(f,^CG,\epsilon)\leq  \inf_{\mu\in C}
\inf_{  {\rm diam}(\xi)<\epsilon }h_\mu(f,\xi)$.
\end{proof}

\begin{thm}\label{thm 3.8}
Let $(X,d,f)$ be a TDS with  the specification property.  Then for any non-empty closed subset $C\subseteq M_{f}(X)$, we have
\begin{align*}
\overline{\rm mdim}_M^{B}\left(f,G^C,d\right)=\limsup_{\epsilon\to0}\frac{1}{|\log\epsilon|}
\sup_{\mu\in C}
\inf_{  {\rm diam}(\xi)<\epsilon }h_\mu(f,\xi).
\end{align*}
\end{thm}
\begin{proof}
For every  $\epsilon >0$,  by Lemma \ref{lem 3.6}  we have
\begin{align*}
h_{top}^B(f,G^C,\epsilon)\leq  h_{top}^B(f,\{x\in X: V_f(x)\cap {C}\not=\emptyset\},\epsilon)
\leq  \sup_{\mu \in {C}}\inf_{  {\rm diam}(\xi)<\epsilon }h_\mu(f,\xi).
\end{align*}
By (\ref{bl}), there exists a constant $c>0$ such that  for all $\mu \in C$ and any sufficiently small $\epsilon>0$,
\begin{equation*}
\inf_{{\rm diam}(\xi)<c\epsilon}h_{\mu}(f,\xi)
\leq
h_{top}^B(f,G_{\mu},\epsilon).
\end{equation*}
 Combining this inequality with the fact that $G_\mu\subset G^C$  for each $\mu\in C$, we have
$$\sup_{\mu \in C}\inf_{{\rm diam}(\xi)<c\epsilon}h_{\mu}(f,\xi)
\leq h_{top}^B(f,G^C,\epsilon).$$
\end{proof}

\begin{thm}\label{thm 3.9}
Let $(X,d,f)$ be a TDS with  the specification property. Let $C_1\subseteq M_{f}(X)$  be  a non-empty set and $C_1\subseteq M_{f}(X)$. If $\overline{{\rm co}}(C_1)$ is a connected component of $C_2$, then
\begin{align*}
\overline{\rm mdim}_M^{B}\left(f,G(C_1,C_2),d\right)=\limsup_{\epsilon\to0}\frac{1}{|\log\epsilon|}
\inf_{\mu\in C_1}
\inf_{  {\rm diam}(\xi)<\epsilon }h_\mu(f,\xi).
\end{align*}
\end{thm}

\begin{proof}
Since $C_1\subset\overline{{\rm co}}(C_1)\subset C_2$,   by Theorem \ref{thm 1.2} we have
\begin{align*}
\overline{\rm mdim}_M^{B}\left(f,G(C_1,C_2),d\right)&\geq  \overline{\rm mdim}_M^{B}\left(f,G_{\overline{{\rm co}}(C_1)},d\right)\\
&= \limsup_{\epsilon\to0}\frac{1}{|\log\epsilon|}
\inf_{\mu\in \overline{{\rm co}}(C_1)}
\inf_{  {\rm diam}(\xi)<\epsilon }h_\mu(f,\xi)\\
&= \limsup_{\epsilon\to0}\frac{1}{|\log\epsilon|}
\inf_{\mu\in C_1}
\inf_{  {\rm diam}(\xi)<\epsilon }h_\mu(f,\xi).
\end{align*}
Furthermore, we have $\overline{\rm mdim}_M^{B}\left(f,G(C_1,C_2),d\right)\leq \overline{\rm mdim}_M^{B}\left(f,^{C_1}G,d\right)$.  Using Theorem \ref{thm 3.7},  we get  $$\overline{\rm mdim}_M^{B}\left(f,G(C_1,C_2),d\right) \leq \limsup_{\epsilon\to0}\frac{1}{|\log\epsilon|}
\inf_{\mu\in C_1}
\inf_{  {\rm diam}(\xi)<\epsilon }h_\mu(f,\xi).$$ This completes the proof.
\end{proof}

The Theorems \ref{thm 3.7}, \ref{thm 3.8} and \ref{thm 3.9} imply the following corollaries, which allows us to compare the   metric mean dimensions of sup sets and sub sets with the saturated sets.

\begin{cro}
Let $(X,d,f)$ be a TDS with  the specification property.
\begin{itemize}
\item [(1)] If  $C$ is a non-empty compact connected set of $M_f(X)$, then 
$$\overline{\rm mdim}_M^{B}\left(f,^CG,d\right)=\overline{\rm mdim}_M^{B}\left(f,G_C,d\right).$$
\item [(2)] If  $C$ is a non-empty compact convex set of $M_f(X)$, then 
$$\overline{\rm mdim}_M^{B}\left(f,G^C,d\right)=\overline{\rm mdim}_M^{P}\left(f,G_C,d\right).$$
\end{itemize} 
\end{cro}

\begin{proof}
(1) follows from Theorems \ref{thm 1.2} and \ref{thm 3.7}; (2) is due to Theorems \ref{thm 1.1} and \ref{thm 3.8}.
\end{proof}

\begin{cro}
Let $(X,d,f)$ be a TDS with  the specification property.
\begin{itemize}
\item [(1)] If  $C$ is a non-empty compact connected set of $M_f(X)$, then 
$$\overline{\rm mdim}_M^{B}\left(f,G_C,d\right)=\overline{\rm mdim}_M^{B}\left(f,^CG,d\right)\leq\overline{\rm mdim}_M^{B}\left(f,G^C,d\right).$$
\item [(2)] If  $C$ is a non-empty compact convex set of $M_f(X)$, then 
$$\overline{\rm mdim}_M^{P}\left(f,G_C,d\right)=\overline{\rm mdim}_M^{P}\left(f,G^C,d\right)\leq \overline{\rm mdim}_M^{P}\left(f,^CG,d\right).$$
\item [(3)] If  $C$ is a non-empty compact connected set of $M_f(X)$, then 
$$\overline{\rm mdim}_M^{UC}\left(f,G_C,d\right)=\overline{\rm mdim}_M^{UC}\left(f,G^C,d\right)= \overline{\rm mdim}_M^{UC}\left(f,^CG,d\right).$$
\end{itemize} 
\end{cro}

\begin{proof}
$(1)$ is clear.
 
 $(2)$. It follows from Theorem \ref{thm 1.1} that
\begin{align*}
\limsup_{\epsilon\to0}\frac{1}{|\log\epsilon|}\sup_{\mu\in C}
\inf_{  {\rm diam}(\xi)<\epsilon }h_\mu(f,\xi)=\overline{\rm mdim}_M^{P}\left(f,G_C,d\right)
\leq \overline{\rm mdim}_M^{P}\left(f,G^C,d\right).
\end{align*}
On the other hand, one has $h_{top}^P(f,G^C,\epsilon) \leq \sup_{\mu \in C}\inf_{{\rm diam}(\xi)<\epsilon}h_{\mu}(f,\xi)$ by Lemma \ref{upper}.
Hence, we have  $\overline{\rm mdim}_M^{P}\left(f,G_C,d\right)=\overline{\rm mdim}_M^{P}\left(f,G^C,d\right)$.

 $(3)$. It follows from  Theorem \ref{thm 1.4} and the relations $G_C\subset G^C$ and $G_C\subset ^CG$.
\end{proof}

\section{The applications of main results}\label{sec 4}
In this section, we apply the main results to study the metric mean dimension of level sets, mean Li-Yorke chaos in infinite entropy systems and the metric mean dimension of the set of generic points of  full shifts over compact metric spaces.  

\subsection{Metric mean dimensions of level sets}

Let $(X,d,f)$ be a TDS and $\psi$ be a  continuous function on $X$.  If $\mu$ is a $f$-invariant  probability measure on $X$, the Birkhoff ergodic theorem   verifies that for $\mu$-a.e. $x\in X$, the limit
$$\lim_{n \to \infty}
\frac{1}{n}\sum_{j=0}^{n-1} \psi(f^jx)$$
exists.

However, given $\alpha \in \mathbb{R}$ it does not give more information about the ``size" of the set of the points of $X$ that the time average  of $\psi$ equals $\alpha$. For this reason, the \emph{$\alpha$-level set of $\psi$}  is defined by 
$$K_{\alpha}:=\{x\in X:\lim_{n \to \infty}
\frac{1}{n}\sum_{j=0}^{n-1} \psi(f^jx)=\alpha\}.$$

In \cite{Backes2023}, the authors proved that  if $K_{\alpha}$ is non-empty and the TDS $(X,d,f)$ has the  specification property, then 
\begin{align*}
\overline{\rm mdim}_M^B(f,K_\alpha,d)
=
\limsup_{\epsilon\to0}\frac{ 1 }{|\log\epsilon|}\sup_{\mu\in M_f(\psi,\alpha)}\inf_{ {\rm diam}(\xi)<\epsilon }h_\mu(f,\xi),
\end{align*}
where $M_f(\psi,\alpha):=\{\mu \in M_f(X): \int \psi d\mu =\alpha\}$.

Actually, we are able to prove this variational principle  by  an alternative approach and  give new characterization for the (packing/upper capacity) metric mean dimension of level sets.

\begin{thm}\label{cor 4.2}
	Let $(X,d,f)$ be a TDS with the specification property and $\psi$ be a  continuous function on $X$. 
	If $\alpha \in \mathbb{R}$ is a constant such that $K_{\alpha}\not=\emptyset$, then   
	\begin{align*}
		\overline{\rm mdim}_M^B(f,K_\alpha,d)
		&=
		\overline{\rm mdim}_M^P(f,K_\alpha,d)
		=
		\limsup_{\epsilon\to0}\frac{ 1 }{|\log\epsilon|}\sup_{\mu\in M_f(\psi,\alpha)}\inf_{ {\rm diam}(\xi)<\epsilon }h_\mu(f,\xi),\\
		\overline{\rm mdim}_M^{UC}(f,K_\alpha,d)
		&=\overline{\rm mdim}_M(f,X,d).
	\end{align*}
	The corresponding result are also valid for  the three types of lower metric mean dimensions by changing $\limsup_{\epsilon\to 0}$ into $\liminf_{\epsilon\to 0}$.
\end{thm}

\begin{proof}
	If  $K_{\alpha}\not=\emptyset$, then  $M_f(\psi,\alpha)$ is  a non-empty convex  set.  
For $x \in X$, the statement $\lim_{n \to \infty}
	\frac{1}{n}\sum_{j=0}^{n-1} \psi(f^jx)=\alpha$ is equivalent  to the statement $V_f(x)\subset M_f(\psi, \alpha)$.  Recall $G^{ M_f(\varphi,\alpha) }=\{ x\in X: V_f(x)\subset M_f(\varphi,\alpha)\}$. It follows that $K_{\alpha}=G^{ M_f(\varphi,\alpha) }$.  By Lemma \ref{upper} $(1)$, we get
	\begin{equation}\label{4.3}
		\overline{\rm mdim}_M^P(f,K_\alpha,d)\leq \limsup_{\epsilon\to0}\frac{ 1}{|\log\epsilon|}\sup_{\mu\in M_f(\varphi,\alpha)}\inf_{ {\rm diam}(\xi)<\epsilon }h_\mu(f,\xi).
	\end{equation}
	
By inequality (\ref{bl}), there exists a constant $c>0$ such that  for every $\mu \in M_f(X)$ and sufficiently  small $\epsilon>0$,
\begin{equation*}
	\inf_{{\rm diam}(\xi)<c\epsilon}h_{\mu}(f,\xi)
	\leq
	h_{top}^B(f,G_{\mu},\epsilon).
\end{equation*}
Since $G_\mu\subset K_{\alpha}$  for any $\mu\in M_f(\varphi,\alpha)$,  we have
$$\sup_{\mu \in M_f(\varphi,\alpha)}\inf_{{\rm diam}(\xi)<c\epsilon}h_{\mu}(f,\xi)
\leq
h_{top}^B(f, K_{\alpha},\epsilon),$$
which implies that
\begin{align}\label{4.2}
	\limsup_{\epsilon\to 0}\frac{1}{|\log\epsilon|}\sup_{\mu \in M_f(\varphi,\alpha)}\inf_{{\rm diam}(\xi)<\epsilon}h_{\mu}(f,\xi)
	\leq
	\overline{\rm mdim}_M^B(f,K_\alpha,d).
\end{align}	
By (\ref{4.3}), (\ref{4.2}) and $\overline{\rm mdim}_M^B(f,K_{\alpha},d)\leq \overline{\rm mdim}_M^P(f,K_{\alpha},d)$, we get the first equalities.

Fix  $\mu\in M_f(\psi,\alpha)$.  Since the specification property implies  $G_{\mu}\not=\emptyset$ \cite{Pfister2007},  we have $G_{\mu}\subset K_{\alpha}$. By  Corollary \ref{cor 1.5}, we obtain  $\overline{\rm mdim}_M^{UC}(f,K_\alpha,d)
=\overline{\rm mdim}_M(f,X,d)$.
\end{proof}

\begin{exa}\label{ex 4.2}
	Endow the product space $[0,1]^\mathbb{Z}$ with  the product metric:
	\begin{equation*}
		d( (x_n)_{n\in\mathbb{Z}},(y_n)_{n\in\mathbb{Z}} )
		=
		\sum_{n\in\mathbb{Z}}\frac{|x_n-y_n|}{2^{|n|}},
	\end{equation*}
	and let $\sigma:[0,1]^\mathbb{Z}\to [0,1]^\mathbb{Z}$ be the left shift map defined by $\sigma((x_n)_{n\in\mathbb{Z}})=(x_{n+1})_{n\in\mathbb{Z}}$. 
	
	It is shown that  the system $([0,1]^\mathbb{Z},d,\sigma)$  has the specification property {\rm (cf.  \cite[Proposition 21.2]{Denker1976})}, and ${\rm {mdim}}_M(\sigma,[0,1]^\mathbb{Z},d)=1$ {\rm \cite[E. Example]{Lindenstrauss2018}}. Let $\varphi: [0,1]^\mathbb{Z}\to \mathbb{R}$ be a continuous map given by $\varphi((x_n)_{n\in \mathbb{Z}})=x_0$. Then for any $\alpha \in [0,1]$, by Theorem \ref{cor 4.2} we have
	\begin{align*}
		\overline{\rm mdim}_M^B(\sigma,K_\alpha,d)
		=
		\overline{\rm mdim}_M^P(\sigma,K_\alpha,d)
		=
		\limsup_{\epsilon\to0}\frac{ 1 }{|\log\epsilon|}\sup_{\mu\in M_\sigma(\varphi,\alpha)}\inf_{ {\rm diam}(\xi)<\epsilon }h_\mu(\sigma,\xi),
	\end{align*}
	where  $K_{\alpha}:=\{x\in [0,1]^\mathbb{Z}:\lim_{n \to \infty}
	\frac{1}{n}\sum_{j=0}^{n-1} x_j=\alpha\}$, and 
	$M_{\sigma}(\varphi,\alpha):=\{\mu \in M_{\sigma}([0,1]^\mathbb{Z}): \int \varphi d\mu =\alpha\}$. Furthermore,  we have
	$$	\overline{\rm mdim}_M^{UC}(\sigma,K_\alpha,d)=1.$$
\end{exa}

\subsection{Mean Li-Yorke chaos in infinite entropy systems}

In this subsection, we provide a quantitative result involving the chaotic behavior of infinite entropy systems. Specifically, we prove that the (packing/upper capacity) metric mean dimension of the set of mean Li-Yorke pairs is equal to  twice the metric mean dimension of the phase space.

Given a TDS $(X,d,f)$, this naturally  induces a product system  $(X\times X, d\times d, f\times f)$, where $X\times X$ is a  compact space equipped with the product  metric:
 $$d\times d( (x_1,y_1),(x_2,y_2) )=\max\{d(x_1,x_2),d(y_1,y_2) \},$$ and  $(f\times f)(x,y):=(fx,fy)$.
A pair $(x,y)\in X\times X$ is called  a \emph{mean Li-Yorke pair} if
\begin{equation*}
\liminf_{n\to\infty}\frac{1}{n}\sum_{i=0}^{n-1}d(f^ix,f^iy)=0, ~\text{and}\
\limsup_{n\to\infty}\frac{1}{n}\sum_{i=0}^{n-1}d(f^ix,f^iy)>0.
\end{equation*}
Namely, the orbits of $x$ and $y$  is sufficiently close to each other for sufficiently $n$, but they may  be apart from each other for infinitely many time.

Denote by  ${\rm MLY}(f)$ the set of   all mean Li-Yorke pairs.  A subset $S \subset X$ is called  a \emph{mean Li-Yorke set} if any two distinct points in $S$ form a mean Li-Yorke pair. The system $(X,f)$ is \emph{mean Li-Yorke chaotic} if there is an uncountable mean Li-Yorke set.  A well-known fact  is  that positive entropy systems are  mean Li-Yorke chaotic \cite{Dow2014}.

\begin{thm}\label{thm 4.2}
Let $(X,d,f)$ be a  TDS with  the specification property.  Then for $S\in\{UC, P \}$,  we have
\begin{align*}
 &\overline{\rm mdim}_M^{S}(f\times f, {\rm MLY}(f),d\times d)
=2~\overline{\rm mdim}_M(f,X,d)\\
&\underline{\rm mdim}_M^{S}(f\times f, {\rm MLY}(f),d\times d)
=2~\underline{\rm mdim}_M(f,X,d).
\end{align*}
\end{thm}

\begin{proof}
 It  suffices to prove the first equality since the second one can be  obtained in a similar manner. 
 
 We claim that
 there exists $\mu_0\in M_{f\times f}(X\times X)$  such that  $$\int_{X\times X} d(x,y)\dif\mu_0(x,y)>0.$$

Notice that the specification property implies that the system has positive topology entropy. Since positive entropy systems are mean Li-Yorke chaotic \cite{Dow2014}, we  choose two distinct points  $x,y \in X$,  a sequence  $\{n_i\}$ of positive integers that converges to $\infty$  as $i \to \infty$,  and $\theta>0$ (depending on $x,y$) such that   $\frac{1}{n_i}\sum_{j=0}^{n_i-1}d(f^jx,f^jy)>\theta.$  
Put  $\mu_{n_i}:=\frac{1}{n_i}\sum_{j=0}^{n_i-1}\delta_{(f^jx,f^jy)}$.  Then, in the weak$^{*}$-topology, any  accumulation point $\mu_0 \in M_{f\times f}(X\times X)$ of $\{\mu_{n_i}\}$ satisfies the desired  claim.

Fix $\nu\in M_f(X)$ and put $\mu_1:=\nu\circ\phi^{-1}$, where $\phi:X\to X\times X$ is the diagonal map given by  $\phi(x)=(x,x)$. It is easy to see that $\mu_1\in M_{f\times f}(X\times X)$ and $\int_{X\times X} d(x,y)\dif\mu_1=0$.  Fix  $\gamma>0$ and  let $$c_\gamma:=\min\{\frac{1}{\gamma},\overline{\rm mdim}_M(f\times f,X\times X,d\times d)-\gamma\}.$$  By  Theorem \ref{mlk},  we choose  a sequence of positive real numbers $\epsilon_k\to0$ as $k \to \infty$ and a sequence $\{m_k\}$ of $f\times f$-invariant measures such that for every $k$,
\begin{align*}
\frac{ 1}{|\log\epsilon_k|}\inf_{ {\rm diam}(\xi,d\times d)<\epsilon_k }h_{m_k}(f\times f,\xi)>c_{\gamma}.
\end{align*}

Let   $$K=\overline{ {\rm co}}\{\mu_0, \mu_1, m_k, k\geq 1\}.$$ Then $K$ is a closed convex subset of $M_{f\times f}(X\times X)$.  Since $(X\times X, d\times dm f\times f)$ has  the specification property,  by \cite{Pfister2007} we have $G_K\not=\emptyset$. For any $(x_0,y_0)\in G_K$, there exists two sequences of integers $\{ t_n^0 \}$ and $\{ t_n^1 \}$ such that $\mathcal{E}_{ t_n^0 }(x_0,y_0)\to\mu_0$, $\mathcal{E}_{ t_n^1 }(x_0,y_0)\to\mu_1$ as $n\to\infty$, respectively. Hence,
\begin{equation*}
\lim_{n\to\infty}\frac{1}{t_n^0}\sum_{i=0}^{t_n^0-1}d(f^ix_0,f^iy_0)=\int_{X\times X} d(x,y)\dif\mu_0>0,
\end{equation*}
and 
\begin{equation*}
\lim_{n\to\infty}\frac{1}{t_n^1}\sum_{i=0}^{t_n^1-1}d(f^ix_0,f^iy_0)=\int_{X\times X} d(x,y)\dif\mu_1=0.
\end{equation*}
This yields that  $G_K\subset {\rm MLY}(f)$.  
  Therefore, by Theorem \ref{thm 1.1} one  has
\begin{align*}
\overline{\rm mdim}_M^P(f\times f, {\rm MLY}(f),d\times d)
\geq&
\overline{\rm mdim}_M^P(f\times f, G_K,d\times d)\\
=&
\limsup_{\epsilon\to 0}\frac{ 1}{|\log\epsilon|}\sup_{\nu\in K}\inf_{ {\rm diam}(\eta,d\times d)<\epsilon }h_\nu(f\times f,\eta) \\
\geq&
\limsup_{k\to\infty}\frac{ 1}{|\log\epsilon_k|}\inf_{ {\rm diam}(\xi,d\times d)<\epsilon_k }h_{m_k}(f\times f,\xi) \geq c_{\gamma}.
\end{align*}
Letting $\gamma \to 0$, one has
\begin{align*}
 \overline{\rm mdim}_M(f\times f,X\times X,d\times d) \leq& \overline{\rm mdim}^{P}_M(f\times f,{\rm MLY}(f),d\times d)\\
 \leq & \overline{\rm mdim}^{UC}_M(f\times f,{\rm MLY}(f),d\times d)\\
\leq &\overline{\rm mdim}_M(f\times f,X\times X,d\times d)\\
=&2~\overline{\rm mdim}_M(f,X,d).
\end{align*}
This completes the proof.
\end{proof}

For instance, under the setting of  Example \ref{ex 4.2} we have
$$\overline{\rm mdim}_M^{P}(\sigma\times \sigma, {\rm MLY}(\sigma),d\times d)=\overline{\rm mdim}_M^{UC}(\sigma\times \sigma, {\rm MLY}(\sigma),d\times d)=2.$$

\subsection{Generic points  of full shift over compact metric space}\label{sec 5}

For  the full shifts over  compact metric spaces,   we   present the precise formulae for the metric mean dimension of the set of generic points of  invariant measures.

Let $(K,d_K)$ be a compact metric space. Consider the left shift map $\sigma$ on $K^\mathbb{Z}$ again, where  $K^\mathbb{Z}$ is endowed with  the following product metric
\begin{equation*}
d\left(
(x_n)_{n\in\mathbb{Z}}, (y_n)_{n\in\mathbb{Z}}
\right)
=\sum_{n\in\mathbb{Z}}\frac{d_K(x_n,y_n)}{2^{|n|}}.
\end{equation*}
 It is well-known that the system $(K^{\mathbb{Z}},d,\sigma)$ has the specification property. It was proved in \rm{\cite[Theorem 5]{vv7}} that,
\begin{align}\label{md}
\overline{\rm mdim}_M(\sigma,K^{\mathbb{Z}},d)&= \overline{\rm dim}_B(K,d_K),\\
\underline{\rm mdim}_M(\sigma,K^{\mathbb{Z}},d)&= \underline{\rm dim}_B(K,d_K),
\end{align}
where $\overline{\rm dim}_B(K,d)$ and $\underline{\rm dim}_B(K,d)$ denote the upper  and lower box dimensions of $K$ under the metric $d$, respectively. 

By Corollary \ref{cor 1.5}, for  the full shift over compact metric spaces  the metric mean dimensions of the set of generic points of  invariant measures are fully determined  by  the box dimension of the underlying space and mean  R\'enyi information dimension of invariant measures.

 \begin{thm}
Let $(K^{\mathbb{Z}},d,\sigma)$  be the full shift over $(K,d_K)$. 	Then for any  $\mu\in M_\sigma(K^\mathbb{Z})$,
 	\begin{align*}
 		&\overline{\rm mdim}_M^{UC}(\sigma,G_\mu,d^\mathbb{Z})=\overline{\rm dim}_B(K,d)\\
 		&\overline{\rm mdim}_M^B(\sigma,G_\mu,d^\mathbb{Z})=\overline{\rm mdim}_M^P(\sigma,G_\mu,d^\mathbb{Z})
 		=
 		\limsup_{\epsilon\to0}\frac{1}{|\log\epsilon|}\inf_{{\rm diam}(\xi)<\epsilon} h_\mu(\sigma,\xi).
 	\end{align*}
 \end{thm}

Next, for certain   full shift over self-similar set we show the metric mean dimension of the product of the self-similar measure is  closely related to the similarity dimension of IFS.

Let $m\geq 1$ be a positive integer and endow $\mathbb{R}^{m}$  with the  supremum norm $\|\cdot\|_{\mathbb{R}^m}$.  Assume that $T_i:\mathbb{R}^m\to\mathbb{R}^m$  is contracting similitude for $1\leq i\leq l$, i.e., for some $r_i\in(0,1)$, $$\|T_ix-T_iy\|_{\mathbb{R}^m}=r_i\|x-y\|_{\mathbb{R}^m}.$$

The iterated function system (IFS for short) $\{T_i\}_{1\leq i\leq l}$ satisfies the \emph{Open Set Condition} if there exists a bounded open subset $V\subset\mathbb{R}^m$ such that $T_i(V)\subset V,\ \forall 1\leq i\leq l$, and  $T_i(V)\cap T_j(V)=\emptyset ~\text{for}~i\neq j.$ For instance, the middle-third Cantor set, the  Sierpin\'{n}ski triangle or  gasket, are the  attractors of iterated function systems satisfying the Open Set Condition. Under the OSC condition, it is well known that there are  (unique) compact subset $K$, which we call  the \emph{self-similar set}, and  a measure $\mu$, which  we call the \emph{self-similar measure}, such that
\begin{itemize}
	\item [(1)] $K=\cup_{i=1}^lT_i(K)$;
	\item [(2)] ${\rm dim}_H(K,d)={\rm dim}_B(K,d)=\alpha$, and $\alpha$ is the unique solution of equation
	\begin{equation*}
		\sum_{i=1}^lr_i^\alpha=1,
	\end{equation*}
where ${\rm dim}_H$ denotes the Hausdorff dimension under the metric  $d:=\|\cdot\|_{\mathbb{R}^m}$. The root $\alpha$ is called the \emph{similarity dimension} of the IFS;
	\item[(3)]  $\mu=\sum_{i=1}^lr_i^\alpha\cdot \mu\circ T_i^{-1}$ and ${\rm supp}(\mu)=K$.
\end{itemize}

Now  take the attractor  $K$ obtained  as above   and  the product measure $\nu:=\mu^{\otimes\mathbb{Z}}$  on   $K^\mathbb{Z}$.

\begin{thm}
The three types of metric mean dimensions of $G_\nu$ coincide with the  similarity dimension. i.e.,
$$\overline{\rm mdim}_M^S(\sigma,G_{\nu},d^\mathbb{Z})=\alpha=\limsup_{\epsilon\to0}
\frac{\inf_{{\rm diam}(\xi)<\epsilon}h_{\nu}(\sigma,\xi) }{|\log\epsilon|},$$  
where $S\in\{ B, P, UC \}$.
\end{thm}

\begin{proof}
In \cite{Hutchinson1981}, Hutchinson  proved that  there exists a constant $C>0$ such that for any $z\in K$ and $\epsilon\in(0,1)$, one has
\begin{equation}\label{dan}
\mu(B(z,\epsilon))\leq C\epsilon^\alpha.
\end{equation}
 Fix $\epsilon\in(0,1)$. For any $x=(x_k)_{k\in\mathbb{Z}}\in K^\mathbb{Z}$, we  define
\begin{equation*}
L_n(x,\epsilon)=\left\{
(y_k)_{k\in\mathbb{Z}}\in K^\mathbb{Z}: \|x_k-y_k\|_{\mathbb{R}^m}<\epsilon,\ 0\leq k\leq n-1
\right\}.
\end{equation*}
Then  $B_n(x,\epsilon) \subset L_n(x,\epsilon)$. Since $\nu(L_n(x,\epsilon))\leq C^n\epsilon^{n\alpha}$ by  \eqref{dan}, one has
\begin{equation*}
\nu(B_n(x,\epsilon))\leq C^n\epsilon^{n\alpha},\ \forall x\in K^\mathbb{Z},
\end{equation*}
which yields that the  Brin-Katok local $\epsilon$-entropy of $\nu$ is bounded blew by  $$\overline{h}_{\nu}^{BK}(\sigma,\epsilon):=\int \limsup_{n \to \infty}\frac{-\log\mu(B_n(x,\epsilon))}{n} d\nu \geq \alpha|\log\epsilon|-\log C.$$
Applying the Shannon-McMillan-Breiman theorem, the following inequality is valid  for any TDS $(X,d,f)$:
\begin{equation}\label{bkd}
\overline{h}_\mu^{BK}(f,\epsilon)\leq\inf_{{\rm diam}(\xi)<\epsilon}h_\mu(f,\xi),\ \forall\mu\in M_f(X).
\end{equation}
Hence, we have
\begin{align*}
\alpha
&\leq
\limsup_{\epsilon\to0}\frac{\overline{h}_{\nu}^{BK}(\sigma,\epsilon) }{|\log\epsilon|}
\overset{\eqref{bkd}}{\leq}\limsup_{\epsilon\to0}
\frac{\inf_{{\rm diam}(\xi)<\epsilon}h_{\nu}(\sigma,\xi) }{|\log\epsilon|}\\
&\overset{by~Corollary~\ref{cor 1.5}}{\leq}
\overline{\rm mdim}_M^S(\sigma,G_{\nu},d^\mathbb{Z})\leq 
\overline{\rm mdim}_M(\sigma,K^\mathbb{Z},d^\mathbb{Z})\overset{\eqref{md}}{=}{\rm dim}_B(K,d)=\alpha.
\end{align*}
This implies the desired result.
\end{proof}

\noindent
\textbf{Acknowledgments:} The first and second authors were supported by the National Natural Science Foundation of China (No. 12101340).  The third author was  supported by  the China Postdoctoral Science Foundation (No. 2024M763856)  and   the Postdoctoral Fellowship Program of CPSF  (No. GZC20252040); Rui Yang would like to thank Dr. Yunping Wang for many helpful discussions and the hospitality extended to him during his visit to Ningbo University.
Special thanks also go to Prof. Ercai Chen for his many useful comments.

\end{document}